\documentclass[sn-mathphys,Numbered]{sn-jnl}

\usepackage{geometry}
\usepackage{graphicx}
\usepackage{float} 
\usepackage{subfigure}
\usepackage{multirow}
\usepackage{amsmath}
\usepackage{amssymb}
\usepackage{amsfonts}
\usepackage{amsthm}
\usepackage{mathrsfs}
\usepackage[title]{appendix}
\usepackage{xcolor}
\usepackage{textcomp}
\usepackage{manyfoot}
\usepackage{booktabs}
\usepackage{algorithm}
\usepackage{algorithmicx}
\usepackage{algpseudocode}
\usepackage{array}
\usepackage{listings}
\usepackage{hyperref}
\usepackage{microtype} 
\usepackage{bm}
\usepackage{booktabs}
\usepackage{multirow}

\theoremstyle{thmstyleone}
\newtheorem{theorem}{Theorem}[section]
\newtheorem{assumption}{Assumption}[section]
\newtheorem{lemma}{Lemma}[section]
\newtheorem{proposition}[theorem]{Proposition}
\newtheorem{corollary}{Corollary}[section]
\newtheorem{example}{Example}[section]

\theoremstyle{thmstyletwo}
\newtheorem{remark}{Remark}[section]

\theoremstyle{thmstylethree}
\newtheorem{definition}{Definition}[section]

\begin{document}

\title[Tikhonov-Regularized Second-Order Primal-Dual Dynamics for Convex-Concave Bilinear Saddle Point Problems]{Tikhonov-Regularized Second-Order Primal-Dual Dynamics for Convex-Concave Bilinear Saddle Point Problems}

\author[1]{\fnm{Bohan} \sur{Zhang}}\email{bohanzhang@std.uestc.edu.cn}

\author*[1]{\fnm{Xiaojun} \sur{Zhang}}\email{sczhxj@uestc.edu.cn}

\affil[1]{University of Electronic Science and Technology of China, Chengdu, Sichuan 611731, People's
	Republic of China}

\abstract{
	This paper develops a Lyapunov-based coefficient-compatible framework for a class of second-order primal-dual dynamical systems with Tikhonov regularization. Unlike existing accelerated inertial primal-dual dynamics, whose analyses are typically carried out under prescribed damping profiles, the proposed framework characterizes the interplay among the viscous damping coefficient, the time-scaling coefficient, the linear extrapolation parameter, and the Tikhonov regularization parameter through differential compatibility conditions, thereby accommodating time-varying coefficients beyond prescribed damping profiles. Within this framework, we investigate two cases in which the Tikhonov regularization parameter decays to zero at different rates. When the Tikhonov regularization parameter decays to zero relatively rapidly, the fast convergence rate of the primal-dual gap is preserved, whereas a slower decay rate yields refined asymptotic convergence estimates. We further show that a sufficiently slow decay ensures strong convergence of the trajectories to the minimum-norm saddle point. To establish the latter property, we introduce a moving-saddle Lyapunov framework centered at the instantaneous regularized saddle point and derive verifiable coefficient conditions ensuring the required tracking behavior. Finally, three numerical examples are presented to illustrate the theoretical results.
}

\keywords{saddle point problems, primal-dual dynamical systems, Tikhonov regularization, strong convergence, time-dependent damping}

\maketitle

\section{Introduction}\label{sec1}
In recent years, the close connection between optimization algorithms and second-order dynamical systems has attracted considerable attention. On the one hand, continuous-time dynamical systems can be derived from classical optimization algorithms \cite{ref13,shoulian}. On the other hand, appropriate time discretizations of continuous-time dynamical systems may yield optimization algorithms that inherit the corresponding convergence properties \cite{ref8,ref9,ref10,ref11,ref12}. As a fundamental class of optimization problems, saddle point problems arise in a wide range of applications, including machine learning \cite{ref4,refnew2}, image reconstruction \cite{refnew4,ref6}, and compressed sensing \cite{refnew5,refnew6}. Consequently, the development and analysis of second-order dynamical systems for solving saddle point problems have become an active area of research.

Early studies of second-order dynamical systems for optimization primarily focused on the following unconstrained optimization problem:
\begin{equation}\label{eq1}
	\min_{x\in\mathcal{X}} f(x),
\end{equation}
where $\mathcal{X}$ is a real Hilbert space and $f:\mathcal{X}\to\mathbb{R}$ is a continuously differentiable convex function with a Lipschitz continuous gradient. To solve the problem \eqref{eq1}, Su et al. \cite{ref9} proposed the following second-order dynamical system:
\[
	\ddot{x}(t)+\frac{\alpha}{t}\dot{x}(t)+\nabla f(x(t))=0.\tag{$\text{AVD}_{\alpha}$}
\]
They showed that $\alpha=3$ is the critical threshold for achieving the convergence rate $\mathcal{O}\left(\frac{1}{t^2}\right)$. Here, $\frac{\alpha}{t}$ is an asymptotically vanishing viscous damping coefficient. Building on the dynamical system (AVD$_\alpha$), May \cite{ref16} proved that, for $\alpha>3$, the objective values converge at the rate $o\left(\frac{1}{t^2}\right)$. Attouch et al. \cite{ref17} and Apidopoulos et al. \cite{ref18} established that, for $\alpha\leq3$, the objective values along the trajectory satisfy the convergence estimate $\mathcal{O}\left(t^{-\frac{2\alpha}{3}}\right)$. Under additional assumptions on the objective function $f$, Aujol et al. \cite{ref19} further investigated the convergence behavior of the objective values for $\alpha>0$.

Early studies on the coupling of damped inertial dynamics with Tikhonov regularization focused on Polyak's heavy-ball system with friction \cite{ref15}, in which the damping coefficient $\alpha>0$ is constant. Attouch and Czarnecki \cite{new1} considered the following dynamical system:
\begin{equation*}
	\ddot{x}(t)+\alpha\dot{x}(t)+\nabla f(x(t))+\epsilon(t)x(t)=0.\tag{$\text{HBFC}$}
\end{equation*}
They proved that, under the condition $\int_{0}^{+\infty}\epsilon(t)\mathrm{d}t=+\infty$, every solution $x(t)$ of the dynamical system (HBFC) converges strongly to the minimum-norm element of $\operatorname{argmin} f$.

Building on (AVD$_{\alpha}$), Attouch et al. \cite{ref21} introduced a second-order dynamical system incorporating the Tikhonov regularization term $\epsilon(t)x(t)$. Their aim was to retain $\frac{\alpha}{t}$ as an asymptotically vanishing viscous damping coefficient while ensuring strong convergence of the generated trajectories. The resulting system is given by
\begin{equation*}
	\ddot{x}(t)+\frac{\alpha}{t}\dot{x}(t)+\nabla f(x(t))+\epsilon(t)x(t)=0,
	\tag{$\text{AVD}_{\alpha,\epsilon}$}
\end{equation*}
where $\alpha\geq3$ and $\epsilon\colon[t_0,+\infty)\to[0,+\infty)$. When $\epsilon(t)$ tends to zero sufficiently rapidly, (AVD$_{\alpha,\epsilon}$) retains the fast convergence properties of (AVD$_{\alpha}$). When $\epsilon(t)$ tends to zero sufficiently slowly, the trajectories generated by (AVD$_{\alpha,\epsilon}$) converge strongly to the minimum-norm minimizer of the problem \eqref{eq1}.

More generally, Riahi \cite{wuyueshu1} proposed the following generalized Tikhonov-regularized dynamical system involving a time-dependent viscous damping coefficient and a time-scaling coefficient:
\begin{equation*}
	\ddot{x}(t)+\alpha(t)\dot{x}(t)+\beta(t)\nabla f(x(t))+\epsilon(t)x(t)=0,
\end{equation*}
where $\alpha,\beta,\epsilon\colon[t_0,+\infty)\to(0,+\infty)$ are continuously differentiable functions. Strong convergence of the generated trajectories was also established under suitable assumptions. Further Tikhonov-regularized dynamical systems for unconstrained optimization problems have been investigated for viscous damping profiles of the forms $\alpha$ \cite{ref26}, $\frac{\alpha}{t}$ \cite{ref23,ref27,ref28,ref22}, $\frac{\alpha}{t^q}$ with $0<q<1$ \cite{ref13,ref29}, and $\delta\sqrt{\epsilon(t)}$ \cite{new3,ref24}.

In many practical applications, optimization problems are subject to constraints. We therefore consider the following linearly constrained optimization problem:
\begin{equation}\label{eq7}
	\begin{cases}
		\displaystyle \min_{x\in\mathcal{X}} f(x),\\
		\text{s.t. } Ax=b,
	\end{cases}
\end{equation}
where $\mathcal{X}$ and $\mathcal{Y}$ are real Hilbert spaces, $A\colon\mathcal{X}\to\mathcal{Y}$ is a continuous linear operator, and $b\in\mathcal{Y}$. To solve the problem \eqref{eq7}, one commonly introduces an augmented Lagrangian function and studies the associated primal-dual formulation. This approach makes it possible to extend second-order dynamical systems developed for unconstrained optimization problems to the linearly constrained setting. The augmented Lagrangian function $\widehat{\mathcal{L}}_\sigma\colon\mathcal{X}\times\mathcal{Y}\to\mathbb{R}$ associated with the problem \eqref{eq7} is defined by
\begin{equation*}
	\widehat{\mathcal{L}}_\sigma(x,\lambda):=f(x)+\langle\lambda,Ax-b\rangle+\frac{\sigma}{2}\|Ax-b\|^2,
\end{equation*}
where $\sigma\geq0$ is the penalty parameter.

Hulett et al. \cite{ref34} proposed the following second-order dynamical system with asymptotically vanishing damping and time-scaling:
\begin{equation*}
	\begin{cases}
		\ddot{x}(t)+\frac{\alpha}{t}\dot{x}(t)+\beta(t)\nabla_x\widehat{\mathcal{L}}_\sigma\left(x(t),\lambda(t)+\theta t\dot{\lambda}(t)\right)=0, \\
		\ddot{\lambda}(t)+\frac{\alpha}{t}\dot{\lambda}(t)-\beta(t)\nabla_\lambda\widehat{\mathcal{L}}_\sigma\left(x(t)+\theta t\dot{x}(t),\lambda(t)\right)=0.
	\end{cases}
	\tag{\text{HN-AVD}}
\end{equation*}
Here, $\alpha>0$, $\theta>0$, and $\beta\colon[t_0,+\infty)\to\mathbb{R}$ is a time-scaling function. They established fast convergence rates for the primal-dual gap, the feasibility measure, and the objective values along the generated trajectories.

Building on the work of Hulett et al. \cite{ref34}, Zhu et al. \cite{refZhuTTnew} proposed the following Tikhonov-regularized second-order primal-dual dynamical system with time-scaling and asymptotically vanishing damping:
\begin{equation*}
	\begin{cases}
		\ddot{x}(t)+\frac{\alpha}{t}\dot{x}(t)+\beta(t)\nabla_x\widehat{\mathcal{L}}_\sigma\left(x(t),\lambda(t)+\theta t\dot{\lambda}(t)\right)+\beta(t)\epsilon(t)x(t)=0, \\
		\ddot{\lambda}(t)+\frac{\alpha}{t}\dot{\lambda}(t)-\beta(t)\nabla_\lambda\widehat{\mathcal{L}}_\sigma\left(x(t)+\theta t\dot{x}(t),\lambda(t)\right)=0.
	\end{cases}
\end{equation*}
Here, the Tikhonov regularization parameter $\epsilon\colon [t_0,+\infty)\to[0,+\infty)$ is a nonincreasing continuously differentiable function satisfying $\lim\limits_{t\to+\infty}\epsilon(t)=0$. They showed that, under suitable conditions, if $\epsilon(t)$ satisfies $\int_{t_0}^{+\infty}\frac{\beta(t)\epsilon(t)}{t}\mathrm{d}t<+\infty$, then the trajectories generated by the dynamical system converge strongly to the minimum‑norm solution of problem \eqref{eq7}. On the other hand, if $\epsilon(t)$ satisfies $\int_{t_0}^{+\infty}t\beta(t)\epsilon(t)\mathrm{d}t<+\infty$, then fast convergence rates are obtained for the primal‑dual gap, the feasibility violation, the objective residual, and the norm of the objective gradient along the trajectories. Moreover, the trajectories converge weakly to a primal‑dual solution of the linearly constrained convex optimization problem \eqref{eq7}.

For further results on Tikhonov-regularized dynamical systems for linearly constrained optimization problems, we refer interested readers to \cite{shoulian,ref36,ref38,intronew1}.

Building on previous studies of unconstrained and linearly constrained optimization problems, we consider the following convex-concave bilinear saddle point problem:
\begin{equation}\label{eq13}
	\min_{x\in\mathbb{R}^n}\max_{y\in\mathbb{R}^m} \mathcal{L}(x,y):=f(x)+\langle Kx,y\rangle-g(y),
\end{equation}
where $K\in\mathbb{R}^{m\times n}$ is a linear operator from $\mathbb{R}^n$ to $\mathbb{R}^m$, $\langle\cdot,\cdot\rangle$ denotes the standard Euclidean inner product, and $f\colon\mathbb{R}^n\to\mathbb{R}$ and $g\colon\mathbb{R}^m\to\mathbb{R}$ are continuously differentiable convex functions.

Zeng et al. \cite{ref40} introduced and analyzed the following accelerated primal-dual dynamical system with vanishing damping for the problem \eqref{eq13}:
\begin{equation*}
	\begin{cases}
		\ddot{x}(t)+\frac{\alpha}{t}\dot{x}(t)+\nabla_x\mathcal{L}\left(x(t),y(t)+\theta t\dot{y}(t)\right)=0, \\
		\ddot{y}(t)+\frac{\alpha}{t}\dot{y}(t)-\nabla_y\mathcal{L}\left(x(t)+\theta t\dot{x}(t),y(t)\right)=0,
	\end{cases}
	\tag{\text{APDD}}
\end{equation*}
where $\theta=\max\left\{\frac{1}{2},\frac{3}{2\alpha}\right\}$. They further established that the primal-dual gap along the trajectories converges at the rate $\mathcal{O}\left(t^{-\frac{2\alpha}{3}}\right)$ for $0<\alpha<3$ and at the rate $\mathcal{O}\left(\frac{1}{t^2}\right)$ for $\alpha\geq3$.

He et al. \cite{ref42} proposed the following general second-order primal-dual dynamical system for convex-concave bilinear saddle point problems:
\begin{equation*}
	\begin{cases}
		\ddot{x}(t)+\alpha(t)\dot{x}(t)+\beta(t)\nabla_x\mathcal{L}\left(x(t),y(t)+\theta(t)\dot{y}(t)\right)=0, \\
		\ddot{y}(t)+\alpha(t)\dot{y}(t)-\beta(t)\nabla_y\mathcal{L}\left(x(t)+\theta(t)\dot{x}(t),y(t)\right)=0,
	\end{cases}
	\tag{\text{MPDD}}
\end{equation*}
where $\alpha,\beta,\theta \colon [t_0,+\infty)\to(0,+\infty)$ denote the damping, time-scaling, and extrapolation coefficients, respectively. They established that the primal-dual gap along the trajectories converges at the rate $\mathcal{O}\left(\frac{1}{t^{2a}\theta(t)\beta(t)}\right)$ for $a\geq0$.

Although substantial progress has been made in second-order primal-dual dynamics for convex-concave bilinear saddle point problems \cite{ref40,ref41,ref42,ref43,ref44}, a common Lyapunov-based analysis framework that simultaneously incorporates flexible time-dependent coefficients and Tikhonov-induced minimum-norm selection is still lacking. To clarify the position of this work, we first contrast our framework with the three most closely related studies along five key dimensions: damping profile, time-scaling, Tikhonov regularization, minimum-norm selection, and additional geometric assumptions.

The accelerated primal-dual dynamical system in \cite{ref40} uses the prescribed asymptotically vanishing damping profile $\frac{\alpha}{t}$ without time-scaling or Tikhonov regularization, and establishes only weak convergence of the trajectories and does not provide a mechanism for minimum-norm selection. The general framework in \cite{ref42} further allows time-dependent damping, time-scaling, and extrapolation coefficients, but does not incorporate Tikhonov regularization and hence cannot address the minimum-norm saddle point selection problem. The work in \cite{ref44} establishes strong convergence to the minimum-norm saddle point under a prescribed slowly vanishing damping profile. Its analysis employs an eventual geometric localization condition formulated directly on the generated trajectory. In contrast, our analysis introduces a moving-saddle deviation viewpoint in which the deviation between the generated trajectory and the time-dependent regularized saddle path is controlled through a path-motion condition rather than a trajectory-based localization condition.

However, understanding the role and formulation of such additional conditions in Tikhonov-regularized primal-dual dynamics remains an important issue. This raises the question of whether strong convergence can also be characterized through the evolution of the regularized saddle path and the associated coefficient conditions.

Motivated by this perspective, we develop a coefficient-compatible framework in which the viscous damping coefficient $\alpha(t)$, the time-scaling coefficient $\beta(t)$, the linear extrapolation parameter $\theta$, and the Tikhonov regularization parameter $\epsilon(t)$ are analyzed within a coupled Lyapunov-dissipation framework. The resulting framework interprets strong convergence as a tracking process toward the time-dependent regularized saddle path, thereby providing a different viewpoint for understanding the trade-off between accelerated convergence and minimum-norm solution selection.

We consider the following Tikhonov-regularized second-order primal-dual dynamical system for the problem \eqref{eq13}:
\begin{equation}\label{eq15}
	\begin{cases}
		\ddot{x}(t)+\alpha(t)\dot{x}(t)
		+\beta(t)\nabla_x\mathcal{L}_t
		\left(x(t),y(t)+\theta t\dot{y}(t)\right)=0,\\
		\ddot{y}(t)+\alpha(t)\dot{y}(t)
		-\beta(t)\nabla_y\mathcal{L}_t
		\left(x(t)+\theta t\dot{x}(t),y(t)\right)=0.
	\end{cases}
	\tag{\text{GTRD}}
\end{equation}
Here, $t\geq t_0>0$ and $\theta>0$. The functions
$\alpha:[t_0,+\infty)\to(0,+\infty)$ and
$\beta:[t_0,+\infty)\to(0,+\infty)$ denote the viscous damping and time-scaling coefficients, respectively, while $\theta t$ is the linear extrapolation coefficient.

Associated with problem \eqref{eq13}, we introduce the augmented Lagrangian function $\mathcal{L}_t:\mathbb{R}^n\times\mathbb{R}^m\to\mathbb{R}$ defined by
\begin{equation}\label{eq21}
	\begin{aligned}
		\mathcal{L}_t(x,y)
		&:=\mathcal{L}(x,y)
		+\frac{\epsilon(t)}{2}
		\left(\|x\|^2-\|y\|^2\right)\\
		&=f(x)+\langle Kx,y\rangle-g(y)
		+\frac{\epsilon(t)}{2}
		\left(\|x\|^2-\|y\|^2\right),
	\end{aligned}
\end{equation}
where the Tikhonov regularization parameter
$\epsilon:[t_0,+\infty)\to(0,+\infty)$ is assumed to be a nonincreasing $\mathcal{C}^1$ function satisfying $\lim\limits_{t\to+\infty}\epsilon(t)=0$.

The main contributions of this paper are summarized as follows:
\begin{itemize}
	\item[(1)] We develop a coefficient-compatible Lyapunov framework for second-order primal-dual dynamical systems with Tikhonov regularization. The proposed framework characterizes the differential compatibility relations among the viscous damping coefficient, the time-scaling coefficient, the linear extrapolation parameter, and the Tikhonov regularization parameter, providing a verifiable admissibility criterion for their joint parameter configurations.

    \item[(2)] By considering two cases in which the Tikhonov regularization parameter decays to zero at different rates, we establish convergence estimates for the primal-dual gap, velocities, and gradient differences. These results provide a general analytical framework for investigating the asymptotic convergence properties of the trajectories under different decay behaviors of the Tikhonov regularization parameter.

    \item[(3)] Under suitable path-motion conditions, we construct a Lyapunov function centered at the instantaneous Tikhonov-regularized saddle point and establish strong convergence of the trajectories to the minimum-norm saddle point by controlling their deviation from the regularized saddle path.

    \item[(4)] We provide explicit examples of admissible parameter configurations satisfying the proposed conditions and illustrate the theoretical results through numerical experiments.
	
\end{itemize}

The remainder of this paper is organized as follows. Section \ref{sec2} reviews basic properties of convex–concave bilinear saddle point problems and the augmented Lagrangian function, presents the auxiliary results required for the subsequent analysis, and establishes the existence and uniqueness of a global strong solution to the proposed dynamical system. Section \ref{sec3} studies the asymptotic behavior of the generated trajectories in the two cases. Convergence estimates for the primal–dual gap and velocities, together with estimates for the gradient differences and several weighted integrals, are derived, and representative parameter choices are discussed. Section \ref{sec6} establishes strong convergence of the trajectories to the minimum-norm saddle point by means of a Lyapunov function centered at the saddle point of the Tikhonov-regularized problem, and further treats the special case in which the minimum-norm saddle point is the origin. Section \ref{sec4} presents numerical experiments illustrating the theoretical results and the finite-time behavior of the proposed system alongside related primal-dual dynamical systems. Finally, Section \ref{sec5} concludes the paper.

\section{Preliminaries and Well-Posedness}\label{sec2}
This section reviews several basic properties of the Lagrangian function and the augmented Lagrangian function associated with the problem \eqref{eq13}, together with several auxiliary lemmas needed in the subsequent convergence analysis. We also introduce the notion of a global strong solution to the dynamical system \eqref{eq15} and establish its existence and uniqueness by proving the well-posedness of the corresponding Cauchy problem.

\subsection{Properties of Saddle Point Problems and Their Augmented Forms}

\begin{definition}\label{andian}
	A pair $ (x^*,y^*) \in \mathbb{R}^n \times \mathbb{R}^m $ is called a saddle point of the Lagrangian function $\mathcal{L}$ if
	\begin{equation}\label{eq19}
		\mathcal{L}(x^*, y) \leq \mathcal{L}(x^*, y^*) \leq \mathcal{L}(x, y^*),\qquad
		\forall\,(x,y)\in\mathbb{R}^n\times\mathbb{R}^m.
	\end{equation}
\end{definition}

Let $\Omega$ denote the saddle point set of $\mathcal{L}$, and assume that $\Omega$ is nonempty, closed, and convex. By the first-order optimality conditions,
\begin{equation}\label{eq20}
	(x^*,y^*)\in\Omega
	\Longleftrightarrow
	\left\{
	\begin{aligned}
		\nabla_x \mathcal{L}(x^*,y^*)&= \nabla f(x^*) + K^*y^* = 0,\\
		\nabla_y \mathcal{L}(x^*,y^*)&= -\nabla g(y^*) + Kx^* = 0.
	\end{aligned}
	\right.
\end{equation}

For the augmented Lagrangian function $\mathcal{L}_t \colon \mathbb{R}^n\times\mathbb{R}^m\to\mathbb{R}$, the partial gradients are
\begin{equation}\label{zengguang1}
	\begin{cases}
		\nabla_x \mathcal{L}_t(x,y)= \nabla f(x) + K^*y + \epsilon(t)x,\\
		\nabla_y \mathcal{L}_t(x,y)= -\nabla g(y) + Kx - \epsilon(t)y.
	\end{cases}
\end{equation}
For each $t\geq t_0>0$, the function $\mathcal{L}_t(x,\cdot)$ is $\epsilon(t)$-strongly concave for every fixed $x\in\mathbb{R}^n$, while $\mathcal{L}_t(\cdot,y)$ is $\epsilon(t)$-strongly convex for every fixed $y\in\mathbb{R}^m$. It follows that, for any $(x^*,y^*)\in\Omega$ and $t\geq t_0>0$,
\begin{equation}\label{eq31}
		\langle \nabla_x\mathcal{L}\left(x,y^*\right)+\epsilon(t)x,x^*-x \rangle \leq \mathcal{L}\left(x^*,y^*\right)-\mathcal{L}\left(x,y^*\right)+\frac{\epsilon(t)}{2}\left(\left\|x^*\right\|^2-\left\|x\right\|^2-\left\|x-x^*\right\|^2\right),
\end{equation}
and
\begin{equation*}\label{eq31'}
		\langle \nabla_y\mathcal{L}\left(x^*,y\right)-\epsilon(t)y,y^*-y \rangle \geq \mathcal{L}\left(x^*,y^*\right)-\mathcal{L}\left(x^*,y\right)+\frac{\epsilon(t)}{2}\left(\left\|y\right\|^2-\left\|y^*\right\|^2+\left\|y-y^*\right\|^2\right).
\end{equation*}
Moreover, $\mathcal{L}_t$ admits a unique Tikhonov-regularized saddle point $\left(x_t,y_t\right) \in \mathbb{R}^n \times \mathbb{R}^m$, that is,
\begin{equation}\label{eq19'}
	\mathcal{L}_t(x_t, y) \leq \mathcal{L}_t(x_t, y_t) \leq \mathcal{L}_t(x, y_t),\qquad
	\forall\,(x,y)\in\mathbb{R}^n\times\mathbb{R}^m.
\end{equation}

Throughout the paper, we work under the following assumptions.
\begin{assumption}\label{assumption}
	$f\colon\mathbb{R}^n\to\mathbb{R}$ and $g\colon\mathbb{R}^m\to\mathbb{R}$ are continuously differentiable convex functions, where $\nabla f$ and $\nabla g$ are $L_1$- and $L_2$-Lipschitz continuous, respectively, for some constants $\L_1,L_2>0$. The functions $\alpha,\beta,\epsilon \colon [t_0,+\infty)\to(0,+\infty)$ appearing in the dynamical system \eqref{eq15} belong to $\mathcal{C}^1$, and $\epsilon$ is nonincreasing with $\lim\limits_{t\to+\infty}\epsilon(t)=0$. The parameters $\theta>0$ and $t_0>0$ are constants. In addition, $K\in\mathbb{R}^{m\times n}$ is a continuous linear operator with adjoint $K^*$.
\end{assumption}

For later use in the Lyapunov analysis, we supplement Assumption \ref{assumption} with the following non-strict conditions \eqref{tiaojian2}, \eqref{tiaojian3}, \eqref{tiaojian4} and their strict counterparts \eqref{tiaojian2'}, \eqref{tiaojian3'}, \eqref{tiaojian4'}: \\
\begin{minipage}{0.48\textwidth}
	\begin{align}
		1+\theta-\theta t \alpha(t) \leq 0,
		\tag{$\mathcal{K}_1$}\label{tiaojian2}
		\\
		\left(2\theta-1\right)\beta(t)+\theta t\dot{\beta}(t) \leq 0,
		\tag{$\mathcal{K}_2$}\label{tiaojian3}
		\\
		\alpha(t)+t\dot{\alpha}(t) \leq t\beta(t)\epsilon(t).
		\tag{$\mathcal{K}_3$}\label{tiaojian4}
	\end{align}
\end{minipage}
\hfill
\begin{minipage}{0.48\textwidth}
	\begin{align}
		1+\theta-\theta t \alpha(t) < 0,
		\tag{$\mathcal{K}_1^+$}\label{tiaojian2'}
		\\
		\left(2\theta-1\right)\beta(t)+\theta t\dot{\beta}(t) < 0,
		\tag{$\mathcal{K}_2^+$}\label{tiaojian3'}
		\\
		\alpha(t)+t\dot{\alpha}(t) < t\beta(t)\epsilon(t).
		\tag{$\mathcal{K}_3^+$}\label{tiaojian4'}
	\end{align}
\end{minipage}

Conditions \eqref{tiaojian2}--\eqref{tiaojian4} constitute a set of Lyapunov-compatible differential balance conditions involving the viscous damping coefficient $\alpha(t)$, the time-scaling coefficient $\beta(t)$, the linear extrapolation parameter $\theta$, and the Tikhonov regularization parameter $\epsilon(t)$. They are introduced as sufficient conditions ensuring that the coefficients appearing in the Lyapunov derivatives have the required signs. More specifically, \eqref{tiaojian2} controls the inertial-extrapolation interaction, \eqref{tiaojian3} regulates the evolution of the time-scaling coefficient, and \eqref{tiaojian4} balances the variation of the damping coefficient with the Tikhonov regularization term. Therefore, these conditions \eqref{tiaojian2}--\eqref{tiaojian4} jointly define a verifiable sufficient admissibility regime for the parameter configurations $(\alpha,\beta,\epsilon,\theta)$ considered in this work. The strengthened conditions \eqref{tiaojian2'}--\eqref{tiaojian4'} are employed when quantitative dissipation properties are needed for the strong convergence analysis and for deriving the corresponding integral estimates.

\subsection{Auxiliary Results and Well-Posedness of the Cauchy Problem}
In this subsection, we first present several auxiliary lemmas needed for the subsequent convergence analysis and introduce the notion of a global strong solution to the dynamical system \eqref{eq15}. We then discuss the existence and uniqueness of such a solution for the corresponding Cauchy problem.

\begin{lemma}\citetext{\citealp{refnew0}; \citealp[Lemma B.3]{ref38}}\label{limit1}
	Let $t_0>0$, $\theta>0$, and $a\in\mathbb{R}^n$, and let $x \colon [t_0,+\infty)\to\mathbb{R}^n$ be continuously differentiable. Suppose that there exists a constant $D_0>0$ such that
	\begin{equation*}
		\left\|x(t)-a+\theta t\dot{x}(t)\right\|^2
		\leq D_0,
		\qquad \forall\,t\geq t_0.
	\end{equation*}
	Then $x$ is bounded on $[t_0,+\infty)$.
\end{lemma}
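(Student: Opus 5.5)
We set $z(t):=x(t)-a$ and $u(t):=z(t)+\theta t\dot z(t)=x(t)-a+\theta t\dot{x}(t)$. The hypothesis says $\|u(t)\|\le\sqrt{D_0}$ for all $t\ge t_0$. The plan is to regard this identity as a first-order linear ODE for $z$ with bounded forcing, solve it explicitly using an integrating factor, and read off boundedness from the explicit formula.

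\textbf{Integrating factor.} Multiplying $z+\theta t\dot z=u$ by $\frac{1}{\theta}t^{\frac{1}{\theta}-1}$ gives
\begin{equation*}
\frac{d}{dt}\Big(t^{\frac{1}{\theta}}z(t)\Big)=\frac{1}{\theta}t^{\frac{1}{\theta}-1}u(t).
\end{equation*}
This step uses only that $x$ is $\mathcal{C}^1$ and $t>0$.

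\textbf{Integration and bound.} Integrating from $t_0$ to $t$ and dividing by $t^{1/\theta}$ gives
\begin{equation*}
z(t)=\Big(\frac{t_0}{t}\Big)^{\frac{1}{\theta}}z(t_0)+\frac{1}{\theta t^{1/\theta}}\int_{t_0}^{t}s^{\frac{1}{\theta}-1}u(s)\,\mathrm{d}s.
\end{equation*}
We bound the norm of each term using the triangle inequality.
\begin{itemize}
\item Since $t\ge t_0$, the first term is at most $\|z(t_0)\|$.
\item The second term is at most
\begin{equation*}
\frac{\sqrt{D_0}}{\theta t^{1/\theta}}\int_{t_0}^{t}s^{\frac{1}{\theta}-1}\mathrm{d}s=\sqrt{D_0}\Big(1-(t_0/t)^{1/\theta}\Big)\le\sqrt{D_0}.
\end{equation*}
\end{itemize}
Hence $\|x(t)-a\|\le\|x(t_0)-a\|+\sqrt{D_0}$, and therefore
\begin{equation*}
\|x(t)\|\le\|a\|+\|x(t_0)-a\|+\sqrt{D_0}\quad\text{for all }t\ge t_0,
\end{equation*}
which gives the claimed boundedness.

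There is no real obstacle in this argument. The only point needing care is choosing the integrating factor $t^{1/\theta}$, which works precisely because $\theta>0$ and $t_0>0$. Positivity of $\theta$ also guarantees that the kernel $\frac{1}{\theta}t^{-1/\theta}s^{1/\theta-1}$ has total mass at most one on $[t_0,t]$, which is what makes the forcing term uniformly bounded independent of $t$.
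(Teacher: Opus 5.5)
Your proof is correct: the integrating factor $t^{1/\theta}$ turns the hypothesis into $\frac{\mathrm{d}}{\mathrm{d}t}\bigl(t^{1/\theta}(x(t)-a)\bigr)=\frac{1}{\theta}t^{1/\theta-1}u(t)$, and integrating this identity gives the uniform bound. The paper gives no proof of its own and defers to the cited references, where this integrating-factor argument is the standard one. As a small sharpening, the weights $(t_0/t)^{1/\theta}$ and $1-(t_0/t)^{1/\theta}$ sum to one, so your formula actually yields $\|x(t)-a\|\le\max\{\|x(t_0)-a\|,\sqrt{D_0}\}$.
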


\begin{lemma}\label{limit}  \cite[Lemma A.3]{ref21}
   Let $a>0$, and let $\zeta \colon [a,+\infty)\to[0,+\infty)$ be a continuous function such that $\zeta\in L^1([a,+\infty))$. Let $\varphi \colon [a,+\infty)\to(0,+\infty)$ be a nondecreasing function satisfying $\lim\limits_{t\to+\infty}\varphi(t)=+\infty$. Then,
   \begin{equation*}\label{eqlimit}
   	\lim_{t\to+\infty}\frac{1}{\varphi(t)}\int_a^t\varphi(\tau)\zeta(\tau)\mathrm{d}\tau=0.
   \end{equation*}
\end{lemma}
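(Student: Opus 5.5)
This is the classical Cesàro-type averaging argument (cf. Attouch et al.), and I would prove it by splitting the integral at a large intermediate time and using the tail smallness of $\zeta$. Fix $\varepsilon>0$. Since $\zeta\in L^1([a,+\infty))$ and $\zeta\geq 0$, there exists $T\geq a$ such that
\begin{equation*}
\int_T^{+\infty}\zeta(\tau)\,\mathrm{d}\tau<\frac{\varepsilon}{2}.
\end{equation*}
For $t\geq T$, write
\begin{equation*}
\frac{1}{\varphi(t)}\int_a^t\varphi(\tau)\zeta(\tau)\,\mathrm{d}\tau
=\frac{1}{\varphi(t)}\int_a^T\varphi(\tau)\zeta(\tau)\,\mathrm{d}\tau+\frac{1}{\varphi(t)}\int_T^t\varphi(\tau)\zeta(\tau)\,\mathrm{d}\tau .
\end{equation*}
Both the head and the tail are nonnegative, since $\varphi>0$ and $\zeta\geq0$.

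For the tail term, use that $\varphi$ is nondecreasing: for $\tau\in[T,t]$ we have $\varphi(\tau)\leq\varphi(t)$. Hence
\begin{equation*}
0\leq\frac{1}{\varphi(t)}\int_T^t\varphi(\tau)\zeta(\tau)\,\mathrm{d}\tau\leq\int_T^t\zeta(\tau)\,\mathrm{d}\tau<\frac{\varepsilon}{2}.
\end{equation*}

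For the head term, monotonicity again gives $\varphi(\tau)\leq\varphi(T)$ on $[a,T]$. This also shows the integral is finite, since $\varphi$ is bounded there, and $\varphi\zeta$ is measurable because $\varphi$ is monotone and $\zeta$ is continuous. Therefore
\begin{equation*}
0\leq\frac{1}{\varphi(t)}\int_a^T\varphi(\tau)\zeta(\tau)\,\mathrm{d}\tau\leq\frac{\varphi(T)\,\|\zeta\|_{L^1}}{\varphi(t)}.
\end{equation*}
The numerator is a fixed constant, and $\varphi(t)\to+\infty$. So there is $T'\geq T$ such that this term is less than $\varepsilon/2$ for all $t\geq T'$.

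Adding the two bounds, the whole quantity lies in $[0,\varepsilon)$ for $t\geq T'$. Since $\varepsilon$ was arbitrary, the limit is zero.

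There is no real obstacle in this argument. The only point needing care is that the single monotonicity hypothesis on $\varphi$ is used in two ways: it controls the tail through $\varphi(\tau)/\varphi(t)\leq1$, and it makes the head integral finite through $\varphi\leq\varphi(T)$ on $[a,T]$.
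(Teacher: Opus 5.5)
Your proof is correct and follows essentially the same route as the source: the paper gives no proof of its own and only cites Lemma A.3 of Attouch et al., whose standard proof likewise fixes a time beyond which $\int\zeta$ is small, bounds the tail part by $\int_T^t\zeta$ using $\varphi(\tau)\le\varphi(t)$, and lets $\varphi(t)\to+\infty$ make the fixed head part vanish. Your remarks on the finiteness and measurability of $\varphi\zeta$ are harmless extra details, not a different approach.
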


\begin{lemma}\label{strongshoulian} \cite[Lemma 2.3]{shoulian}
	Let $\left(\hat{x}^*,\hat{y}^*\right) = \operatorname{Proj}_\Omega0$ denote the minimum-norm saddle point of $\Omega$ and let $\left(x_t,y_t\right)$ be the saddle point of $\mathcal{L}_t$. Then,
	\begin{align*}
		\lim\limits_{t\to+\infty} \|\left(x_t,y_t\right)-\left(\hat{x}^*,\hat{y}^*\right)\| = 0,
		\\
		\|\left(x_t,y_t\right)\| \leq \|\left(\hat{x}^*,\hat{y}^*\right)\|, \text{ for all } t \geq t_0>0.
	\end{align*}
\end{lemma}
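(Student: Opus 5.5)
The plan is to treat the regularized saddle point $z_t:=(x_t,y_t)$ as the zero of a strongly monotone operator and compare it with the minimum-norm saddle point $\hat z^*:=(\hat{x}^*,\hat{y}^*)$ through the monotonicity of the saddle operator. Define
\[
F(x,y):=\bigl(\nabla f(x)+K^*y,\;\nabla g(y)-Kx\bigr).
\]
This operator is monotone, because the bilinear part is skew-symmetric and $\nabla f,\nabla g$ are monotone. By \eqref{zengguang1}, $(x_t,y_t)$ is the saddle point of $\mathcal{L}_t$ if and only if $F(z_t)+\epsilon(t)z_t=0$. By \eqref{eq20}, $z^*\in\Omega$ if and only if $F(z^*)=0$. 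Since $F+\epsilon(t)I$ is continuous and $\epsilon(t)$-strongly monotone, $z_t$ exists and is unique, as the paper already asserts.

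\emph{Norm bound.} Fix any $z^*\in\Omega$. Monotonicity gives $\langle F(z_t)-F(z^*),z_t-z^*\rangle\ge 0$, and substituting $F(z_t)=-\epsilon(t)z_t$ yields $\langle z_t,z_t-z^*\rangle\le 0$. Hence $\|z_t\|^2\le\langle z_t,z^*\rangle\le\|z_t\|\|z^*\|$, so $\|z_t\|\le\|z^*\|$ for every $z^*\in\Omega$. Taking $z^*=\hat z^*$ proves the second claim for all $t\ge t_0$.

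\emph{Convergence.} Since $\epsilon(t)\to0$, it suffices to show that every sequence $t_k\to+\infty$ has a subsequence along which $z_{t_k}\to\hat z^*$. The family $\{z_{t_k}\}$ is bounded, so in finite dimensions we can extract a subsequence converging to some $\bar z$. Then $F(z_{t_k})=-\epsilon(t_k)z_{t_k}\to0$, and continuity of $F$ gives $F(\bar z)=0$, so $\bar z\in\Omega$. Passing to the limit in $\langle z_{t_k},z_{t_k}-z^*\rangle\le0$ gives $\langle\bar z,\bar z-z^*\rangle\le 0$ for all $z^*\in\Omega$. Because $\Omega$ is closed and convex, this is exactly the variational characterization of $\bar z=\operatorname{Proj}_\Omega 0=\hat z^*$. (Alternatively, $\|\bar z\|\le\|\hat z^*\|$ together with uniqueness of the minimum-norm element gives the same conclusion.) The limit is therefore independent of the subsequence, and the full limit follows.

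The main point needing care is the identification of the cluster point. One must verify that the limit inequality holds for every $z^*\in\Omega$, not only for $\hat z^*$, so that the projection characterization applies. No monotonicity of $\epsilon$ is needed beyond $\epsilon(t)>0$ and $\epsilon(t)\to0$.
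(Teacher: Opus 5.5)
The paper does not prove this lemma; it imports it from \cite[Lemma 2.3]{shoulian}, so there is no in-paper argument to compare against. Your proof is correct and self-contained, and it is essentially the classical Tikhonov argument for monotone operators on which the cited result rests.

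\begin{itemize}
\item Skew-symmetry of the coupling makes $F$ monotone.
\item Comparing $z_t$ with an arbitrary $z^*\in\Omega$ gives $\langle z_t,z_t-z^*\rangle\le 0$, and hence the norm bound.
\item Bolzano--Weierstrass, continuity of $F$, and the variational characterization of $\operatorname{Proj}_\Omega 0$ then identify every cluster point as $\hat z^*$.
\end{itemize}

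Your monotonicity step relies on the same mechanism the paper uses to obtain \eqref{jiexian} in Lemma~\ref{cunzai}: the $K$-terms cancel and the gradients are monotone. The paper's proof of Lemma~\ref{cunzai} invokes the present lemma for the boundedness of $(x_t,y_t)$, so your argument makes that chain self-contained.

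The only difference from the general Hilbert-space version is the compactness step. There one extracts a weakly convergent subsequence and uses the weak--strong closedness of the graph of the maximally monotone saddle operator to place the weak limit in $\Omega$. One then upgrades to strong convergence via $\limsup_{t\to+\infty}\|z_t\|\le\|\hat z^*\|$ and weak lower semicontinuity of the norm. In $\mathbb{R}^n\times\mathbb{R}^m$ your compactness-plus-continuity argument suffices and is simpler.

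Two cosmetic remarks:
\begin{itemize}
\item The reduction to sequences is just the sequential characterization of limits and does not itself use $\epsilon(t)\to 0$. That hypothesis enters only through $F(z_{t_k})\to 0$.
\item You need not assume that $\Omega$ is closed and convex. For a continuous monotone $F$, Minty's lemma gives $\Omega=F^{-1}(0)=\bigcap_{w}\{z:\langle F(w),w-z\rangle\ge 0\}$, which is an intersection of closed half-spaces.
\end{itemize}
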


\begin{lemma}\label{cunzai}
	Let $(x_t,y_t)$ be the unique saddle point of $\mathcal{L}_t$. Under Assumption \ref{assumption}, the mapping $t\mapsto(x_t,y_t)$ is locally Lipschitz continuous on $[t_0,+\infty)$. Consequently,
	\[
	(x_t,y_t)\in W_{\mathrm{loc}}^{1,\infty}\left([t_0,+\infty);\mathbb{R}^n\times\mathbb{R}^m\right),
	\]
	and $\dot{x}_t$ and $\dot{y}_t$ exist for almost every $t\geq t_0$. Moreover, for almost every $t\geq t_0$,
	\[
	\left \|\left(\dot{x}_t,\dot{y}_t\right) \right\|
	\leq
	\frac{|\dot{\epsilon}(t)|}{\epsilon(t)}\left\| \left(x_t,y_t\right) \right\|
	\leq
	\frac{|\dot{\epsilon}(t)|}{\epsilon(t)}\left\| \left(\hat{x}^*,\hat{y}^*\right) \right\|.
	\]
\end{lemma}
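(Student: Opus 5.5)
The idea is to use strong monotonicity of the regularized saddle operator and compare the optimality conditions at two times. Set $z=(x,y)$ and define the monotone operator
\[
F(z)=\bigl(\nabla f(x)+K^*y,\;\nabla g(y)-Kx\bigr).
\]
$F$ is monotone: the skew part contributes nothing to $\langle F(z)-F(z'),z-z'\rangle$, and the gradient parts contribute nonnegatively by convexity of $f$ and $g$. $F$ is also Lipschitz, since $\nabla f$, $\nabla g$ are Lipschitz and $K$ is bounded. From \eqref{zengguang1}, the regularized saddle point $z_t=(x_t,y_t)$ is characterized by
\[
F(z_t)+\epsilon(t)z_t=0,
\]
and it is unique because $F+\epsilon(t)I$ is $\epsilon(t)$-strongly monotone.

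For $s,t\ge t_0$, subtract the optimality conditions at $s$ and $t$ and take the inner product with $z_t-z_s$:
\[
\langle F(z_t)-F(z_s),z_t-z_s\rangle+\epsilon(t)\|z_t-z_s\|^2+(\epsilon(t)-\epsilon(s))\langle z_s,z_t-z_s\rangle=0.
\]
Monotonicity of $F$ and Cauchy--Schwarz then give
\[
\epsilon(t)\|z_t-z_s\|\le|\epsilon(t)-\epsilon(s)|\,\|z_s\|.
\]
By Lemma \ref{strongshoulian}, $\|z_s\|\le\|(\hat x^*,\hat y^*)\|$. Since $\epsilon$ is positive, nonincreasing and $\mathcal{C}^1$, on any compact $[t_0,T]$ we have $\epsilon\ge\epsilon(T)>0$, and $\epsilon$ is Lipschitz there with constant $\max_{[t_0,T]}|\dot\epsilon|$. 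Hence
\[
\|z_t-z_s\|\le\frac{\max_{[t_0,T]}|\dot\epsilon|\,\|(\hat x^*,\hat y^*)\|}{\epsilon(T)}\,|t-s|,
\]
which is local Lipschitz continuity of $t\mapsto z_t$.

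By Rademacher's theorem in one variable (a locally Lipschitz function is absolutely continuous and differentiable a.e. with bounded derivative on compacts), $z_t\in W^{1,\infty}_{\mathrm{loc}}$ and $\dot z_t$ exists almost everywhere.

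For the derivative bound, fix a point $t$ where $\dot z_t$ exists. I would not differentiate $F$, which need not be differentiable. Instead, use the inequality above with $s=t+h$ and the roles arranged so that the factor is $\epsilon(t)$:
\[
\epsilon(t)\,\frac{\|z_{t+h}-z_t\|}{|h|}\le\frac{|\epsilon(t+h)-\epsilon(t)|}{|h|}\,\|z_t\|.
\]
One can check this ordering is legitimate: the same computation holds with either time's $\epsilon$ playing the strong-monotonicity role, provided the inner product is paired with the other point's norm. Letting $h\to0$ gives
\[
\epsilon(t)\|\dot z_t\|\le|\dot\epsilon(t)|\,\|z_t\|,
\]
and Lemma \ref{strongshoulian} yields the second inequality. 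The main care point is exactly this bookkeeping: getting $\|z_t\|$ rather than $\|z_{t+h}\|$ in the limit. Either ordering works anyway, since $z$ is continuous, so $\|z_{t+h}\|\to\|z_t\|$.
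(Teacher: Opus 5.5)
Your proposal is correct and follows essentially the same route as the paper. Monotonicity of the gradient part, with the skew $K$-terms cancelling, gives $\epsilon(t)\|z_t-z_s\|\le|\epsilon(t)-\epsilon(s)|\,\|z_s\|$; local Lipschitz continuity then follows via Lemma \ref{strongshoulian}, and dividing by $|t-s|$ and letting $s\to t$ gives the derivative bound.

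The one slip is your displayed finite-$h$ inequality that pairs $\epsilon(t)$ with $\|z_t\|$. Neither version of the computation produces that pairing, and it can fail for $h>0$. Your fallback is sound, though: keep $\|z_{t+h}\|$ and use continuity of $t\mapsto z_t$ in the limit. That is exactly what the paper does, and it suffices.
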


\begin{proof}
	By the first-order optimality conditions \eqref{zengguang1} for $(x_t,y_t)$,
	\[
	\nabla f(x_t)+K^*y_t+\epsilon(t)x_t=0,
	\qquad
	\nabla g(y_t)-Kx_t+\epsilon(t)y_t=0.
	\]
	Thus, for any $t,\tau\geq t_0$,
	\[
	\nabla f(x_t)-\nabla f(x_\tau)+K^*(y_t-y_\tau)+\epsilon(t)(x_t-x_\tau)+\left(\epsilon(t)-\epsilon(\tau)\right)x_\tau
	=0,
	\]
	and
	\[
	\nabla g(y_t)-\nabla g(y_\tau)-K(x_t-x_\tau)+\epsilon(t)(y_t-y_\tau)+\left(\epsilon(t)-\epsilon(\tau)\right)y_\tau
	=0.
	\]
	Taking the inner products with $x_t-x_\tau$ and $y_t-y_\tau$, respectively, and adding the resulting equalities, we find that the terms involving $K$ cancel. By the convexity of $f$ and $g$, the gradients $\nabla f$ and $\nabla g$ are monotone. Hence,
	\begin{equation}\label{jiexian}
	\epsilon(t)\sqrt{\|x_t-x_\tau\|^2+\|y_t-y_\tau\|^2}
	\leq
	|\epsilon(t)-\epsilon(\tau)|\sqrt{\|x_\tau\|^2+\|y_\tau\|^2}.
	\end{equation}
	
	By Lemma \ref{strongshoulian}, $(x_t,y_t)$ is bounded. Since $\epsilon\in C^1$ is positive, it is bounded away from zero and has bounded derivative on every compact interval. Hence, inequality \eqref{jiexian} implies that $t\mapsto(x_t,y_t)$ is locally Lipschitz continuous. Therefore,
	\[
	(x_t,y_t)\in W_{\mathrm{loc}}^{1,\infty}\left([t_0,+\infty);\mathbb{R}^n\times\mathbb{R}^m\right).
	\]
	
	Let $t\geq t_0$ be a point at which the mapping $t\mapsto(x_t,y_t)$ is differentiable. Dividing \eqref{jiexian} by $|t-\tau|$ and letting $\tau\to t$, we obtain
	\[
	\sqrt{\|\dot{x}_t\|^2+\|\dot{y}_t\|^2}
	\leq
	\frac{|\dot{\epsilon}(t)|}{\epsilon(t)}\sqrt{\|x_t\|^2+\|y_t\|^2}.
	\]
	Finally, by Lemma \ref{strongshoulian}, $\|(x_t,y_t)\| \leq \|(\hat{x}^*,\hat{y}^*)\|$, and therefore, for almost every $t\geq t_0$,
	\[
	\sqrt{\|\dot{x}_t\|^2+\|\dot{y}_t\|^2}
	\leq
	\frac{|\dot{\epsilon}(t)|}{\epsilon(t)}\|(\hat{x}^*,\hat{y}^*)\|.
	\]
	
	This completes the proof of Lemma \ref{cunzai}.
\end{proof}
Hence, all differential identities involving $(x_t,y_t)$ in the sequel are understood to hold almost everywhere.

We now define a global strong solution to the Cauchy problem associated with the dynamical system \eqref{eq15} and establish its existence and uniqueness.
\begin{definition}\label{definition1A1}
	Let $t_0>0$. A function $ z=(x,y) \colon [t_0,+\infty)\to\mathbb{R}^n\times\mathbb{R}^m $ is called a global strong solution of the dynamical system \eqref{eq15} if the following conditions hold:
	\begin{itemize}
		\item[(i)] $x\in W_{\mathrm{loc}}^{2,1} ([t_0,+\infty);\mathbb{R}^n)$ and $y\in W_{\mathrm{loc}}^{2,1} ([t_0,+\infty);\mathbb{R}^m)$;
		\item[(ii)] the dynamical system \eqref{eq15} is satisfied for almost every $t\in(t_0,+\infty)$;
		\item[(iii)] $x(t_0)=x_0\in\mathbb{R}^n$, $\dot{x}(t_0)=u_0\in\mathbb{R}^n$, $y(t_0)=y_0\in\mathbb{R}^m$, $\dot{y}(t_0)=v_0\in\mathbb{R}^m$.
	\end{itemize}
\end{definition}

\begin{theorem}\label{theorem1A1}
	Suppose that Assumption \ref{assumption} holds. Then, for any initial condition $\left(x(t_0),y(t_0),\dot{x}(t_0),\dot{y}(t_0)\right) \in \mathbb{R}^n\times\mathbb{R}^m \times\mathbb{R}^n\times\mathbb{R}^m$, the dynamical system \eqref{eq15} admits a unique global strong solution.
\end{theorem}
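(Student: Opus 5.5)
We plan to recast \eqref{eq15} as a first-order system in the state variable $Z(t)=(x(t),y(t),u(t),v(t))\in\mathbb{R}^n\times\mathbb{R}^m\times\mathbb{R}^n\times\mathbb{R}^m$, where $u=\dot{x}$ and $v=\dot{y}$. Using \eqref{zengguang1}, the system reads
\[
\dot{x}=u,\qquad \dot{y}=v,
\]
\[
\dot{u}=-\alpha(t)u-\beta(t)\left(\nabla f(x)+K^*(y+\theta t v)+\epsilon(t)x\right),
\]
\[
\dot{v}=-\alpha(t)v+\beta(t)\left(-\nabla g(y)+K(x+\theta t u)-\epsilon(t)y\right).
\]
We write this compactly as $\dot{Z}(t)=F(t,Z(t))$ with $Z(t_0)=(x_0,y_0,u_0,v_0)$. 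We then apply the Cauchy--Lipschitz--Picard theorem in its global form for ODEs whose right-hand side is globally Lipschitz in $Z$ with a locally integrable Lipschitz modulus.

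\textbf{Step 1: measurability and continuity in $t$.} By Assumption \ref{assumption}, $\alpha,\beta,\epsilon$ are $\mathcal{C}^1$ and hence continuous on $[t_0,+\infty)$. Therefore, for each fixed $Z$, the map $t\mapsto F(t,Z)$ is continuous. In particular, $t\mapsto F(t,0)$ is continuous, so it lies in $L^1_{\mathrm{loc}}([t_0,+\infty))$.

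\textbf{Step 2: global Lipschitz bound in $Z$.} For $Z=(x,y,u,v)$ and $\bar{Z}=(\bar{x},\bar{y},\bar{u},\bar{v})$, we use the Lipschitz continuity of $\nabla f$ and $\nabla g$ (constants $L_1,L_2$) and the boundedness of $K$ and $K^*$. This gives
\[
\|F(t,Z)-F(t,\bar{Z})\|\leq \ell(t)\,\|Z-\bar{Z}\|,
\]
\[
\ell(t):=C\left(1+\alpha(t)+\beta(t)\left(L_1+L_2+\|K\|(1+\theta t)+\epsilon(t)\right)\right),
\]
where $C$ is an absolute constant. The modulus $\ell$ is continuous, hence locally integrable, on $[t_0,+\infty)$. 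Since this bound is global in $Z$, no blow-up in finite time can occur.

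\textbf{Step 3: existence, uniqueness, and regularity.} The global Cauchy--Lipschitz theorem (e.g., \cite[Prop.~6.2.1]{} or the standard Carathéodory version) yields a unique global absolutely continuous solution $Z$ on $[t_0,+\infty)$. To avoid an undefined citation, we would simply invoke the standard theorem by name. Because $F$ is continuous in $(t,Z)$, the solution is in fact $\mathcal{C}^1$. Consequently $x,y\in\mathcal{C}^2$, which embeds in $W^{2,1}_{\mathrm{loc}}$, and Definition \ref{definition1A1} (i)--(iii) holds. Uniqueness in the class of strong solutions follows from Grönwall's inequality applied to the difference of two solutions, using the bound of Step 2.

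\textbf{Main obstacle.} The only delicate point is that the Lipschitz modulus grows in $t$, through the extrapolation factor $\theta t$ and through $\beta(t)$. This growth is harmless, since only local integrability of $\ell$ is needed on each bounded interval $[t_0,T]$. We therefore obtain a solution on each $[t_0,T]$ and extend it by uniqueness to all of $[t_0,+\infty)$.
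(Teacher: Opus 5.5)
Your proposal is correct and follows essentially the same route as the paper: you rewrite the system as a first-order Cauchy problem in $(x,y,\dot{x},\dot{y})$, derive a global Lipschitz bound in the state with a continuous (hence locally integrable) modulus of order $1+\alpha(t)+(1+\theta t)\beta(t)$, and invoke the global Cauchy--Lipschitz theorem. The only differences are minor: you use $F(\cdot,0)\in L^1_{\mathrm{loc}}$ instead of the paper's explicit linear-growth bound $S_2(t)(1+\|\Lambda\|)$, which is equivalent given the Lipschitz estimate, and you additionally observe that joint continuity of $F$ upgrades the solution to $\mathcal{C}^2$.
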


\begin{proof}
	Denote $\Lambda(t):=(\lambda_1(t),\lambda_2(t),\lambda_3(t),\lambda_4(t))=(x(t),y(t),\dot{x}(t),\dot{y}(t))$ and set $\Lambda_0:=(x_0,y_0,u_0,v_0)$. For simplicity, we write $\Lambda=(\lambda_1,\lambda_2,\lambda_3,\lambda_4)$ whenever no confusion can arise. Then, the dynamical system \eqref{eq15} can be equivalently written as the following Cauchy problem:
	\begin{equation}\label{oula}
		\begin{cases}
			\frac{\mathrm{d}\Lambda}{\mathrm{d}t}=\mathcal{F}(t,\Lambda), \\
			\Lambda(t_0)=\Lambda_0,
		\end{cases}
	\end{equation}
	where
	\begin{equation*}
		\mathcal{F}(t,\Lambda) := \begin{pmatrix} 
			\lambda_3 \\
			\lambda_4 \\
			-\alpha(t)\lambda_3-\beta(t)\left(\nabla f(\lambda_1)+\epsilon(t)\lambda_1+K^*\left(\lambda_2+\theta t\lambda_4\right)\right) \\
			-\alpha(t)\lambda_4+\beta(t)\left(-\nabla g(\lambda_2)-\epsilon(t)\lambda_2+K\left(\lambda_1+\theta t\lambda_3\right)\right)
		\end{pmatrix}.
	\end{equation*}
	
	Before deriving the required estimates, we equip the product space $\mathbb{R}^n\times\mathbb{R}^m \times\mathbb{R}^n\times\mathbb{R}^m$ with the norm $\left\|(\lambda_1,\lambda_2,\lambda_3,\lambda_4)\right\|:=\sqrt{\|\lambda_1\|^2+\|\lambda_2\|^2+\|\lambda_3\|^2+\|\lambda_4\|^2}$. Moreover, the operator norm of $K$ is defined by $\|K\|:=\sup\left\{\|Kx\|:x\in\mathbb{R}^n,\ \|x\|\leq 1\right\}$, and the operator norm of $K^*$ is defined analogously.
	
	Since $\nabla f$ and $\nabla g$ are $L_1$- and $L_2$-Lipschitz continuous, respectively, and $K$ is linear and continuous, for any $\Lambda,\Lambda^*\in\mathbb{R}^n\times\mathbb{R}^m\times\mathbb{R}^n\times\mathbb{R}^m$, we have
	\begin{equation*}\label{oula1}
		\begin{aligned}
			&\quad \left\|\mathcal{F}(t,\Lambda)-\mathcal{F}(t,\Lambda^*)\right\| \\
			&\leq \beta(t)\left(\epsilon(t)+\left\|K\right\|\right) \left\|\lambda_1-\lambda_1^*\right\| + \beta(t)\left(\epsilon(t)+\left\|K^*\right\|\right) \left\|\lambda_2-\lambda_2^*\right\| \\
			&\quad +\left(1+\alpha(t)+\theta t\beta(t)\left\|K\right\|\right)  \left\|\lambda_3-\lambda_3^*\right\| + \left(1+\alpha(t)+\theta t\beta(t)\left\|K^*\right\|\right)  \left\|\lambda_4-\lambda_4^*\right\| \\
			&\quad +\beta(t)\left\|\nabla f\left(\lambda_1\right)-\nabla f\left(\lambda_1^*\right)\right\| + \beta(t)\left\|\nabla g\left(\lambda_2\right)-\nabla g\left(\lambda_2^*\right)\right\| \\
			&\leq \beta(t)\left(\epsilon(t)+\left\|K\right\|+L_1\right) \left\|\lambda_1-\lambda_1^*\right\| + \beta(t)\left(\epsilon(t)+\left\|K^*\right\|+L_2\right) \left\|\lambda_2-\lambda_2^*\right\| \\
			&\quad +\left(1+\alpha(t)+\theta t\beta(t)\left\|K\right\|\right)  \left\|\lambda_3-\lambda_3^*\right\| + \left(1+\alpha(t)+\theta t\beta(t)\left\|K^*\right\|\right)  \left\|\lambda_4-\lambda_4^*\right\| \\
			&\leq S_1(t)\left\|\Lambda-\Lambda^*\right\|,
		\end{aligned}
	\end{equation*}
	where
	$S_1(t)=2+2\alpha(t)+\left(1+\theta t\right)\beta(t)\left(\left\|K\right\|+\left\|K^*\right\|\right)+\beta(t)\left(2\epsilon(t)+L_1+L_2\right)$.
	Since $\alpha$, $\beta$, and $\epsilon$ are continuous, $S_1 \in L^1_{loc}[t_0,+\infty)$.
	
	The Lipschitz continuity of $\nabla f$ and $\nabla g$ further gives
	\begin{equation}\label{oula7}
		\left\|\nabla f(\lambda_1)\right\|\leq\left\|\nabla f(0)\right\|+L_1\left\|\lambda_1\right\|,\qquad
		\left\|\nabla g(\lambda_2)\right\| \leq \left\|\nabla g(0)\right\|+L_2\left\|\lambda_2\right\|.
	\end{equation}
	Combining \eqref{oula} and \eqref{oula7} yields
	\begin{equation*}
		\left\|\mathcal{F}(t,\Lambda)\right\| \leq S_2(t)\left(1+\left\|\Lambda\right\|\right),
	\end{equation*}
	where $S_2(t)$ is defined by
	\begin{equation*}
		\begin{aligned}
			S_2(t)&=2+2\alpha(t)+\left(1+\theta t\right)\beta(t)\left(\left\|K\right\|+\left\|K^*\right\|\right) \\
			&\quad+\beta(t)\left(2\epsilon(t)+\|\nabla f(0)\|+\|\nabla g(0)\|+L_1+L_2\right).
		\end{aligned}
	\end{equation*}
	The continuity of $\alpha$, $\beta$, and $\epsilon$ implies that $S_2 \in L^1_{loc}[t_0,+\infty)$. Moreover, for any $\Lambda\in\mathbb{R}^n\times\mathbb{R}^m \times\mathbb{R}^n\times\mathbb{R}^m$, we have
	\begin{equation*}
		\mathcal{F}(\cdot,\Lambda)\in L^1_{loc}\left([t_0,+\infty);\mathbb{R}^n\times\mathbb{R}^m\times\mathbb{R}^n\times\mathbb{R}^m\right).
	\end{equation*}
	By \cite[Proposition 6.2.1]{ref46} and \cite[Corollary A.2]{refjia}, the Cauchy problem \eqref{oula} admits a unique global strong solution
	\begin{equation*}
		\Lambda\in W^{1,1}_{loc}\left([t_0,+\infty);\mathbb{R}^n\times\mathbb{R}^m\times\mathbb{R}^n\times\mathbb{R}^m\right).
	\end{equation*}
	Thus, the dynamical system \eqref{eq15} admits a unique global strong solution. This completes the proof.
\end{proof}

\section{Asymptotic Convergence}\label{sec3}
In this section, we investigate the asymptotic convergence properties of \eqref{eq15} under the following two decay regimes for $\epsilon(t)$: $\int_{t_0}^{+\infty} t\beta(t)\epsilon(t)\mathrm{d}t<+\infty$ and $\int_{t_0}^{+\infty}\frac{\beta(t)\epsilon(t)}{t}\mathrm{d}t<+\infty$.

\subsection{Case $ \int_{t_0}^{+\infty} t \beta(t) \epsilon(t) \mathrm{d}t < +\infty $}\label{subsec1}
In this subsection, we investigate the asymptotic convergence properties of the dynamical system \eqref{eq15} under the condition $\int_{t_0}^{+\infty}t\beta(t)\epsilon(t)\mathrm{d}t<+\infty$, which corresponds to the case in which the Tikhonov regularization parameter $\epsilon(t)$ decays to zero relatively rapidly. For any fixed $(x^*,y^*)\in\Omega$, we define the energy function $\mathcal{E} \colon [t_0,+\infty)\to[0,+\infty)$ by
\begin{equation}\label{eq23}
	\mathcal{E}(t) = \mathcal{E}_1(t) + \mathcal{E}_2(t) + \mathcal{E}_3(t),
\end{equation}
where
\begin{align*}
	\mathcal{E}_1(t) = t^2\beta(t)\left(\mathcal{L}(x(t),y^*)-\mathcal{L}(x^*,y(t))+\frac{\epsilon(t)}{2}\left(\left\|x(t)\right\|^2 +\left\| y(t)\right\|^2\right)\right),
	\\
	\mathcal{E}_2(t) = \frac{1}{2} \left\| b \left( x(t) - x^* \right) + t\dot{x}(t)\right\|^2+\frac{m(t)}{2} \left\| x(t) - x^* \right\|^2,
	\\
	\mathcal{E}_3(t) = \frac{1}{2} \left\| b \left( y(t) - y^* \right) + t \dot{y}(t)\right\|^2 +\frac{m(t)}{2} \left\| y(t) - y^* \right\|^2,
\end{align*}
where $b := \frac{1}{\theta}$, $m(t) := \frac{1}{\theta}\left(t\alpha(t)-1-\frac{1}{\theta}\right)$.
Under condition \eqref{tiaojian2}, $\mathcal{E}(t)$ is nonnegative; its boundedness will be established below under additional assumptions. Thus, $\mathcal{E}(t)$ will serve as a time-varying Lyapunov function for \eqref{eq15}.

\begin{lemma}\label{lemma1}
	Suppose that Assumption \ref{assumption} and condition \eqref{tiaojian2} hold. Then, for any trajectory $(x(t),y(t))$ of the dynamical system \eqref{eq15} and any saddle point $(x^*,y^*)\in\Omega$ of the convex-concave bilinear saddle point problem \eqref{eq13}, we have
	
	\begin{equation}\label{eq24}
		\begin{aligned}
			\dot{\mathcal{E}}(t) &\leq \left(t^2\dot{\beta}(t)+2t\beta(t)-\frac{1}{\theta}t\beta(t)\right)\left(\mathcal{L}(x(t),y^*)-\mathcal{L}(x^*,y(t))\right) \\
			&\quad + \frac{1}{2}\left(\left(t^2\dot{\beta}(t)+2t\beta(t)-\frac{1}{\theta}t\beta(t)\right)\epsilon(t) +t^2\beta(t)\dot{\epsilon}(t)\right) \left(\left\|x(t)\right\|^2+\left\|y(t)\right\|^2\right) \\
			&\quad +\frac{1}{2\theta}\left(\alpha(t)+t\dot{\alpha}(t)-t\beta(t)\epsilon(t)\right) \left(\left\|x(t)-x^*\right\|^2+\left\|y(t)-y^*\right\|^2\right) \\
			&\quad +t\left(\frac{1}{\theta}+1-t\alpha(t)\right) \left(\left\|\dot{x}(t)\right\|^2+\left\|\dot{y}(t)\right\|^2\right) + \frac{1}{2\theta}t\beta(t)\epsilon(t) \left(\left\|x^*\right\|^2+\left\|y^*\right\|^2\right).
		\end{aligned}
	\end{equation}
\end{lemma}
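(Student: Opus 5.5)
The plan is a direct differentiation of the three pieces of $\mathcal{E}$, followed by substitution of the dynamics \eqref{eq15} and the strong convexity inequality \eqref{eq31} (together with its concave counterpart in $y$). Throughout, write $b=1/\theta$ and $m(t)=b(t\alpha(t)-1-b)$, so that $\dot m(t)=b(\alpha(t)+t\dot\alpha(t))$.

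\textbf{Step 1: derivative of $\mathcal{E}_1$.} By the chain rule,
\[
\dot{\mathcal{E}}_1=(2t\beta+t^2\dot\beta)\Big(\mathcal{L}(x,y^*)-\mathcal{L}(x^*,y)+\tfrac{\epsilon}{2}(\|x\|^2+\|y\|^2)\Big)+\tfrac{t^2\beta\dot\epsilon}{2}(\|x\|^2+\|y\|^2)+t^2\beta\,\big\langle \nabla f(x)+K^*y^*+\epsilon x,\dot x\big\rangle+t^2\beta\,\big\langle \nabla g(y)-Kx^*+\epsilon y,\dot y\big\rangle .
\]

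\textbf{Step 2: derivative of $\mathcal{E}_2$ and use of the dynamics.} We have $\frac{d}{dt}\big(b(x-x^*)+t\dot x\big)=(b+1)\dot x+t\ddot x$. Replace $t\ddot x$ using \eqref{eq15}:
\[
t\ddot x=-t\alpha\dot x-t\beta\big(\nabla f(x)+K^*y+\theta tK^*\dot y+\epsilon x\big).
\]
Then expand $\big\langle b(x-x^*)+t\dot x,\,(b+1-t\alpha)\dot x-t\beta(\cdots)\big\rangle$ and add $\tfrac{\dot m}{2}\|x-x^*\|^2+m\langle x-x^*,\dot x\rangle$. The choice of $m$ is precisely what cancels the cross term $\langle x-x^*,\dot x\rangle$, since $b(b+1-t\alpha)+m=0$. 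What survives is:
\begin{itemize}
\item $t(b+1-t\alpha)\|\dot x\|^2$;
\item $\tfrac{b}{2}(\alpha+t\dot\alpha)\|x-x^*\|^2$;
\item $-bt\beta\langle \nabla_x\mathcal{L}_t(x,y+\theta t\dot y),x-x^*\rangle$;
\item $-t^2\beta\langle \nabla_x\mathcal{L}_t(x,y+\theta t\dot y),\dot x\rangle$.
\end{itemize}
Doing the same for $\mathcal{E}_3$ gives the mirror terms with the opposite sign on the $y$-gradient.

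\textbf{Step 3: cancellations.} These are where the work lies, so I expect Steps 3 and 4 to be the main obstacle: careful bookkeeping of the $K$-terms and signs.
\begin{itemize}
\item The $t^2\beta$ gradient--velocity terms from Step 2 combine with those of Step 1. Since $\nabla f(x)+K^*y^*$ versus $\nabla f(x)+K^*y+\theta tK^*\dot y$ differ by $K^*(y^*-y-\theta t\dot y)$, one is left with bilinear terms $t^2\beta\big(\langle K^*(y^*-y),\dot x\rangle-\theta t\langle K^*\dot y,\dot x\rangle\big)$ plus the $y$-analogues.
\item The $\theta t\langle K\dot x,\dot y\rangle$ pieces cancel pairwise.
\item The remaining terms $-t^2\beta\langle K(x^*-x),\dot y\rangle+t^2\beta\langle K^*(y^*-y),\dot x\rangle$ must be matched against the $bt\beta$ terms. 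Because $b\theta=1$, the extrapolation parts of the $bt\beta$ terms, namely $-bt\beta\,\theta t\langle K^*\dot y,x-x^*\rangle$ and its mirror, produce exactly these with opposite signs. This is the reason for $b=1/\theta$.
\item What is left of the $bt\beta$ terms is $-bt\beta\big(\langle \nabla f(x)+K^*y+\epsilon x,x-x^*\rangle+\langle \nabla g(y)-Kx+\epsilon y,y-y^*\rangle\big)$. Here the cross $K$-terms rearrange to $\langle K^*y^*,x-x^*\rangle$-type expressions, turning this into $-bt\beta\big(\langle\nabla_x\mathcal{L}(x,y^*)+\epsilon x,x-x^*\rangle-\langle\nabla_y\mathcal{L}(x^*,y)-\epsilon y,y-y^*\rangle\big)$.
\end{itemize}

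\textbf{Step 4: strong convexity.} Apply \eqref{eq31} and its $y$-analogue, using $\mathcal{L}(x^*,y^*)$ cancellation. This bounds the last expression by
\[
-bt\beta\Big(\mathcal{L}(x,y^*)-\mathcal{L}(x^*,y)\Big)-\tfrac{bt\beta\epsilon}{2}\Big(\|x\|^2+\|y\|^2+\|x-x^*\|^2+\|y-y^*\|^2-\|x^*\|^2-\|y^*\|^2\Big).
\]

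\textbf{Step 5: collect.} Gathering all terms gives exactly the five groups in \eqref{eq24}:
\begin{itemize}
\item gap coefficient $t^2\dot\beta+2t\beta-bt\beta$;
\item $\|x\|^2+\|y\|^2$ coefficient $\tfrac12\big((t^2\dot\beta+2t\beta-bt\beta)\epsilon+t^2\beta\dot\epsilon\big)$;
\item distance coefficient $\tfrac{b}{2}(\alpha+t\dot\alpha-t\beta\epsilon)$;
\item velocity coefficient $t(b+1-t\alpha)$;
\item constant term $\tfrac{b}{2}t\beta\epsilon(\|x^*\|^2+\|y^*\|^2)$.
\end{itemize}
Condition \eqref{tiaojian2} is used only to guarantee $m\ge0$, so that $\mathcal{E}\ge0$. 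The derivative computation itself is an identity followed by the single inequality from Step 4, and it holds almost everywhere along the strong solution of Theorem \ref{theorem1A1}.
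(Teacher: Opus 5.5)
Your proposal is correct and follows essentially the same route as the paper: differentiate $\mathcal{E}_1,\mathcal{E}_2,\mathcal{E}_3$ and substitute the dynamics. The choice of $m(t)$ cancels the $\langle x-x^*,\dot x\rangle$ cross terms, $b=1/\theta$ cancels the bilinear $K$-terms, and \eqref{eq31} together with its concave counterpart produces the gap and $\epsilon$-terms. The only difference is bookkeeping. The paper splits $\nabla_x\mathcal{L}_t(x,y+\theta t\dot y)$ at $y^*$ from the outset and applies \eqref{eq31} inside $\langle\mu_1,\dot\mu_1\rangle$, whereas you cancel the $K$-terms first and rearrange afterwards. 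Your remark that \eqref{tiaojian2} is not needed for the inequality itself, only for $\mathcal{E}\ge 0$, is accurate.
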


\begin{proof}
First, the time derivative of $\mathcal{E}_1(t)$ is given by
	\begin{equation}\label{eq25} 
		\begin{aligned}
			\quad \dot{\mathcal{E}}_1(t) =& \left(2t\beta(t)+t^2\dot{\beta}(t)\right)\left( \mathcal{L}(x(t),y^*) - \mathcal{L}(x^*,y(t)) \right) \\
			&+\frac{\epsilon(t)}{2} \left(2t\beta(t)+t^2\dot{\beta}(t)\right)\left( \left\|x(t)\right\|^2+\left\|y(t)\right\|^2 \right)\\
			&+t^2\beta(t)\bigg(\langle\nabla_x\mathcal{L}(x(t),y^*),\dot{x}(t)\rangle-\langle\nabla_y\mathcal{L}(x^*,y(t)),\dot{y}(t)\rangle  \\
			&\quad \quad \quad \quad +\frac{\dot{\epsilon}(t)}{2}\left(\left\|x(t)\right\|^2+\left\|y(t)\right\|^2\right)+\epsilon(t)\left(\langle x(t),\dot{x}(t)\rangle+\langle y(t),\dot{y}(t)\rangle\right)\bigg).
		\end{aligned}
	\end{equation}
	Next, we consider the function $ \mathcal{E}_2(t) $. Let 
	\begin{equation*}\label{eq26}
		\mu_1(t):=b \left( x(t) - x^* \right) + t\dot{x}(t) = \frac{1}{\theta}\left(x(t)-x^*\right)+t\dot{x}(t).
	\end{equation*}
	Then, 
	\begin{equation*}\label{eq27}
			\dot{\mu}_1(t) = \left(\frac{1}{\theta}+1\right)\dot{x}(t) + t\ddot{x}(t) =\left( \frac{1}{\theta}+1-t\alpha(t) \right)\dot{x}(t) - t\beta(t)\nabla_x\mathcal{L}_t\left(x(t),y(t)+\theta t\dot{y}(t)\right),
	\end{equation*}
	where the second equality follows from the first equation of the dynamical system \eqref{eq15}. Consequently,
	\begin{equation}\label{eq29}
		\begin{aligned}
			\langle\mu_1(t),\dot{\mu}_1(t)\rangle &= t\left(\frac{1}{\theta}+1-t\alpha(t)\right)\left\|\dot{x}(t)\right\|^2 + \frac{1}{\theta}\left(\frac{1}{\theta}+1-t\alpha(t)\right)\langle x(t)-x^*,\dot{x}(t) \rangle \\
			&\quad -t^2\beta(t) \langle \nabla_x\mathcal{L}(x(t),y^*)+\epsilon(t)x(t),\dot{x}(t) \rangle -t^2\beta(t) \langle K^*(y(t)-y^*+\theta t\dot{y}(t)),\dot{x}(t) \rangle \\
			&\quad -\frac{1}{\theta}t\beta(t) \langle \nabla_x\mathcal{L}(x(t),y^*)+\epsilon(t)x(t),x(t)-x^* \rangle \\
			&\quad -\frac{1}{\theta}t\beta(t) \langle K^*(y(t)-y^*+\theta t\dot{y}(t)),x(t)-x^* \rangle \\
			&\leq t\left(\frac{1}{\theta}+1-t\alpha(t)\right)\left\|\dot{x}(t)\right\|^2 + \frac{1}{\theta}\left(\frac{1}{\theta}+1-t\alpha(t)\right)\langle x(t)-x^*,\dot{x}(t) \rangle \\
			&\quad -t^2\beta(t) \langle \nabla_x\mathcal{L}(x(t),y^*)+\epsilon(t)x(t),\dot{x}(t) \rangle -t^2\beta(t) \langle K^*(y(t)-y^*+\theta t\dot{y}(t)),\dot{x}(t) \rangle \\
			&\quad -\frac{1}{\theta}t\beta(t) \left(\mathcal{L}(x(t),y^*)-\mathcal{L}(x^*,y^*) +\frac{\epsilon(t)}{2}\left(\|x(t)\|^2-\|x^*\|^2+\|x(t)-x^*\|^2\right)\right) \\
			&\quad -\frac{1}{\theta}t\beta(t) \langle K^*(y(t)-y^*+\theta t\dot{y}(t)),x(t)-x^* \rangle,
		\end{aligned}
	\end{equation}
	where we have used \eqref{eq31}, together with the following identity obtained directly from \eqref{zengguang1}:
	\begin{equation*}\label{eq30}
		\nabla_x\mathcal{L}_t\left(x(t),y(t)+\theta t\dot{y}(t)\right) = \nabla_x\mathcal{L}\left(x(t),y^*\right)+\epsilon(t)x(t)+K^*\left(y(t)-y^*+\theta t\dot{y}(t)\right).
	\end{equation*}
	
	Moreover,
	\begin{equation}\label{eq35}
		\begin{aligned}
			\frac{\mathrm{d}}{\mathrm{d}t}\left( \frac{m(t)}{2}\left\|x(t)-x^*\right\|^2 \right) &= \frac{\dot{m}(t)}{2}\left\|x(t)-x^*\right\|^2+m(t)\langle x(t)-x^*,\dot{x}(t)\rangle \\
			&= \frac{\alpha(t)+t\dot{\alpha}(t)}{2\theta}\|x(t)-x^*\|^2+\frac{1}{\theta}\left(t\alpha(t)-1-\frac{1}{\theta}\right)\langle x(t)-x^*,\dot{x}(t) \rangle.
		\end{aligned}
	\end{equation}
	Combining \eqref{eq29} and \eqref{eq35}, we obtain
	\begin{equation}\label{eq36}
		\begin{aligned}
			\dot{\mathcal{E}}_2(t) &= \langle \mu_1(t),\dot{\mu}_1(t)\rangle +\frac{\mathrm{d}}{\mathrm{d}t}\left( \frac{m(t)}{2}\left\|x(t)-x^*\right\|^2 \right) \\
			&\leq -\frac{1}{\theta}t\beta(t) \left( \mathcal{L}(x(t),y^*)-\mathcal{L}(x^*,y^*)+\frac{\epsilon(t)}{2}\left(\left\|x(t)\right\|^2-\left\|x^*\right\|^2\right) \right) \\
			&\quad +\frac{1}{2\theta}\left(\alpha(t)+t\dot{\alpha}(t)-t\beta(t)\epsilon(t)\right)\left\|x(t)-x^*\right\|^2 \\
			&\quad +t\left(\frac{1}{\theta}+1-t\alpha(t)\right)\left\|\dot{x}(t)\right\|^2 - t^2\beta(t) \langle \nabla_x\mathcal{L}(x(t),y^*)+\epsilon(t)x(t),\dot{x}(t) \rangle \\
			&\quad - \langle K^*(y(t)-y^*+\theta t\dot{y}(t)),\frac{1}{\theta}t\beta(t)(x(t)-x^*)+t^2\beta(t)\dot{x}(t) \rangle .
		\end{aligned}
	\end{equation}
	
	Similarly, we have
	\begin{equation}\label{eq37}
		\begin{aligned}
			\dot{\mathcal{E}}_3(t) &\leq -\frac{1}{\theta}t\beta(t) \left( \mathcal{L}(x^*,y^*)-\mathcal{L}(x^*,y(t))+\frac{\epsilon(t)}{2}\left(\left\|y(t)\right\|^2-\left\|y^*\right\|^2\right) \right) \\
			&\quad + \frac{1}{2\theta}\left(\alpha(t)+t\dot{\alpha}(t)-t\beta(t)\epsilon(t)\right)\left\|y(t)-y^*\right\|^2 \\
			&\quad + t\left(\frac{1}{\theta}+1-t\alpha(t)\right)\left\|\dot{y}(t)\right\|^2 + t^2\beta(t) \langle \nabla_y\mathcal{L}(x^*,y(t))-\epsilon(t)y(t),\dot{y}(t) \rangle \\
			&\quad + \langle K(x(t)-x^*+\theta t\dot{x}(t)),\frac{1}{\theta}t\beta(t)(y(t)-y^*)+t^2\beta(t)\dot{y}(t) \rangle .
		\end{aligned}
	\end{equation}
	
	Combining \eqref{eq23}, \eqref{eq25}, \eqref{eq36}, and \eqref{eq37} yields
	\begin{equation*}\label{eq38}
		\begin{aligned}
			\dot{\mathcal{E}}(t) &\leq \left(t^2\dot{\beta}(t)+2t\beta(t)-\frac{1}{\theta}t\beta(t)\right)\left(\mathcal{L}(x(t),y^*)-\mathcal{L}(x^*,y(t))\right) \\
			&\quad +\left(\frac{\epsilon(t)}{2}\left(t^2\dot{\beta}(t)+2t\beta(t)-\frac{1}{\theta}t\beta(t)\right) +\frac{\dot{\epsilon}(t)}{2}t^2\beta(t)\right) \left(\|x(t)\|^2+\|y(t)\|^2\right) \\
			&\quad +\frac{1}{2\theta}\left(\alpha(t)+t\dot{\alpha}(t)-t\beta(t)\epsilon(t)\right) \left(\|x(t)-x^*\|^2+\|y(t)-y^*\|^2\right) \\
			&\quad +t\left(\frac{1}{\theta}+1-t\alpha(t)\right) \left(\|\dot{x}(t)\|^2+\|\dot{y}(t)\|^2\right) + \frac{1}{2\theta}t\beta(t)\epsilon(t) \left(\|x^*\|^2+\|y^*\|^2\right).
		\end{aligned}
	\end{equation*}

	This completes the proof of Lemma \ref{lemma1}.
\end{proof}

\begin{theorem}\label{theorem1}
	Suppose that Assumption \ref{assumption} and conditions \eqref{tiaojian2}--\eqref{tiaojian4} hold. Assume further that $\lim\limits_{t\to+\infty}t^2\beta(t)=+\infty$ and $\int_{t_0}^{+\infty}t\beta(t)\epsilon(t)\mathrm{d}t<+\infty$. Then, for any trajectory $(x(t),y(t))$ of the dynamical system \eqref{eq15} and any saddle point $(x^*,y^*)\in\Omega$ of the convex-concave bilinear saddle point problem \eqref{eq13}, the trajectory $(x(t),y(t))$ is bounded, and the following statements hold:
	
	\begin{equation*}
		\mathcal{L}\left(x(t),y^*\right) - 	\mathcal{L}\left(x^*,y(t)\right) = \mathcal{O}\left( \dfrac{1}{t^2\beta(t)} \right),
	\end{equation*}
	\begin{equation*}
		\left\| \dot{x}(t) \right\| = \mathcal{O}\left( \frac{1}{t} \right), \qquad \left\| \dot{y}(t) \right\| = \mathcal{O}\left( \frac{1}{t} \right),
	\end{equation*}
	\begin{equation*}
		\left\| \nabla f(x(t)) - \nabla f(x^*) \right\| = \mathcal{O}\left( \frac{1}{t \sqrt{\beta(t)}} \right), \qquad \left\| \nabla g(y(t)) - \nabla g(y^*) \right\| = \mathcal{O}\left( \frac{1}{t \sqrt{\beta(t)}} \right).
	\end{equation*}
    In particular, if \eqref{tiaojian2'} holds for all $t\geq t_0$, then
    \begin{equation*}
    	\int_{t_0}^{+\infty} t\left(t\alpha(t)-\frac{1}{\theta}-1\right)\left(\left\|\dot{x}(t)\right\|^2+\left\|\dot{y}(t)\right\|^2\right) \mathrm{d}t < +\infty.
    \end{equation*}
    Furthermore, if \eqref{tiaojian3'} holds, then
    \begin{equation*}
    	\int_{t_0}^{+\infty} \left( (1 - 2\theta)\beta(t) - \theta t \dot{\beta}(t) \right) t \left(\mathcal{L}\left(x(t),y^*\right)-\mathcal{L}\left(x^*,y(t)\right)\right) \mathrm{d}t < +\infty,
    \end{equation*}
    \begin{equation*}
    	\int_{t_0}^{+\infty} \left( (1 - 2\theta)\beta(t) - \theta t \dot{\beta}(t) \right) t \left\| \nabla f(x(t)) - \nabla f(x^*) \right\|^2 \mathrm{d}t < +\infty,
    \end{equation*}
    and
    \begin{equation*}
    	\int_{t_0}^{+\infty} \left( (1 - 2\theta)\beta(t) - \theta t \dot{\beta}(t) \right) t \left\| \nabla g(y(t)) - \nabla g(y^*) \right\|^2 \mathrm{d}t < +\infty.
    \end{equation*}
\end{theorem}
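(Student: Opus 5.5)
We start from the differential inequality of Lemma \ref{lemma1} and read off the sign of each coefficient under \eqref{tiaojian2}--\eqref{tiaojian4}. The common factor $t^2\dot\beta+2t\beta-\frac1\theta t\beta=\frac{t}{\theta}\left((2\theta-1)\beta+\theta t\dot\beta\right)$ is nonpositive by \eqref{tiaojian3}. The gap $\mathcal{L}(x,y^*)-\mathcal{L}(x^*,y)$ is nonnegative, since $(x^*,y^*)$ is a saddle point. Hence the first term of \eqref{eq24} is $\le 0$. In the second term, $\dot\epsilon\le0$ because $\epsilon$ is nonincreasing, so that term is also $\le 0$. The third term is $\le0$ by \eqref{tiaojian4}, and the fourth by \eqref{tiaojian2}. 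This leaves
\[
\dot{\mathcal E}(t)\le \frac{1}{2\theta}t\beta(t)\epsilon(t)\left(\|x^*\|^2+\|y^*\|^2\right).
\]
Integrating from $t_0$ to $t$ and using $\int_{t_0}^{+\infty}t\beta\epsilon\,\mathrm{d}t<+\infty$ gives $\sup_{t\ge t_0}\mathcal E(t)\le C<+\infty$.

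Each piece of $\mathcal E$ is nonnegative by \eqref{tiaojian2}, since $m(t)\ge0$, so each piece is bounded by $C$.
\begin{itemize}
\item From $\mathcal E_2\le C$ we get $\|\frac1\theta(x-x^*)+t\dot x\|^2\le 2C$, i.e.\ $\|x-x^*+\theta t\dot x\|^2$ is bounded. Lemma \ref{limit1} then shows that $x$ is bounded; $y$ is handled the same way using $\mathcal E_3$.
\item Boundedness of $x$ together with the bound on $\mu_1=\frac1\theta(x-x^*)+t\dot x$ gives $t\|\dot x\|\le \|\mu_1\|+\frac1\theta\|x-x^*\|$, which is bounded. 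Hence $\|\dot x\|=\mathcal O(1/t)$, and similarly $\|\dot y\|=\mathcal O(1/t)$.
\item From $\mathcal E_1\le C$, dropping the nonnegative $\epsilon$-term, we get $t^2\beta(t)\left(\mathcal{L}(x,y^*)-\mathcal{L}(x^*,y)\right)\le C$, which is the $\mathcal O(1/(t^2\beta))$ rate for the gap.
\end{itemize}

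For the gradient estimates we use cocoercivity. Since $\nabla f$ is $L_1$-Lipschitz and $f$ is convex, the standard descent-lemma inequality gives
\[
f(x)-f(x^*)-\langle\nabla f(x^*),x-x^*\rangle\ge\frac1{2L_1}\|\nabla f(x)-\nabla f(x^*)\|^2 .
\]
Using $\nabla f(x^*)=-K^*y^*$, the left-hand side equals $\mathcal{L}(x,y^*)-\mathcal{L}(x^*,y^*)$. The analogous inequality for $g$ gives $\mathcal{L}(x^*,y^*)-\mathcal{L}(x^*,y)\ge\frac1{2L_2}\|\nabla g(y)-\nabla g(y^*)\|^2$. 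Summing the two,
\[
\frac1{2L_1}\|\nabla f(x)-\nabla f(x^*)\|^2+\frac1{2L_2}\|\nabla g(y)-\nabla g(y^*)\|^2\le \mathcal{L}(x,y^*)-\mathcal{L}(x^*,y).
\]
Combined with the gap rate, this yields both gradient rates $\mathcal O\left(1/(t\sqrt{\beta})\right)$. The hypothesis $t^2\beta\to\infty$ is only needed so that these rates are genuine decay statements.

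For the integral estimates we keep the relevant negative term in the inequality of Lemma \ref{lemma1} instead of discarding it, and drop all the others.
\begin{itemize}
\item Under \eqref{tiaojian2'} we obtain
\[
\mathcal E(t)+\int_{t_0}^t s\left(s\alpha(s)-\frac1\theta-1\right)\left(\|\dot x\|^2+\|\dot y\|^2\right)\mathrm{d}s\le \mathcal E(t_0)+\frac{1}{2\theta}\left(\|x^*\|^2+\|y^*\|^2\right)\int_{t_0}^{+\infty}s\beta\epsilon\,\mathrm{d}s .
\]
Since $\mathcal E\ge0$, the first integral is finite.
\item Under \eqref{tiaojian3'} the same argument applied to the gap term gives finiteness of $\int\frac1\theta\left((1-2\theta)\beta-\theta t\dot\beta\right)t\left(\mathcal{L}(x,y^*)-\mathcal{L}(x^*,y)\right)\mathrm{d}t$. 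The factor $\frac1\theta$ is harmless, and the cocoercivity inequality above transfers finiteness to the two gradient integrals.
\end{itemize}

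The only delicate point is the sign bookkeeping: we must check that $t^2\dot\beta+2t\beta-\frac1\theta t\beta$ is exactly $-\frac{t}{\theta}\left((1-2\theta)\beta-\theta t\dot\beta\right)$, and that nonnegativity of $\mathcal E$ relies on $m\ge0$. Beyond that, the argument is a routine Lyapunov integration.
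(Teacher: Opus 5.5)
Your proposal is correct and follows the paper's proof essentially step by step. You integrate the inequality of Lemma \ref{lemma1}, use the sign properties from \eqref{tiaojian2}--\eqref{tiaojian4} together with $\int_{t_0}^{+\infty}t\beta(t)\epsilon(t)\,\mathrm{d}t<+\infty$ to bound $\mathcal{E}$, and then read off in turn boundedness via Lemma \ref{limit1}, the velocity and gap rates, the gradient rates via cocoercivity, and the integral estimates by keeping the relevant dissipative term; the only cosmetic difference is that you keep separate constants $L_1,L_2$ where the paper uses $\max\{L_1,L_2\}$.
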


\begin{proof}
 	By Lemma \ref{lemma1}, integrating \eqref{eq24} from $t_0$ to $t$ yields
 	\begin{equation}\label{eq50}
 		\begin{aligned}
 			&\mathcal{E}(t) -\int_{t_0}^{t} \left(\tau^2\dot{\beta}(\tau)+2\tau\beta(\tau)-\frac{1}{\theta}\tau\beta(\tau)\right) \left(\mathcal{L}(x(\tau),y^*)-\mathcal{L}(x^*,y(\tau))\right) \mathrm{d}\tau \\
 			&\quad - \frac{1}{2} \int_{t_0}^{t} \left(\left(\tau^2\dot{\beta}(\tau)+2\tau\beta(\tau)-\frac{1}{\theta}\tau\beta(\tau)\right)\epsilon(\tau) + \tau^2\beta(\tau)\dot{\epsilon}(\tau)\right) \left(\left\|x(\tau)\right\|^2+\left\|y(\tau)\right\|^2\right) \mathrm{d}\tau \\
 			&\quad - \frac{1}{2\theta} \int_{t_0}^{t} \left(\tau\dot{\alpha}(\tau)+\alpha(\tau)-\tau\beta(\tau)\epsilon(\tau)\right) \left(\left\|x(\tau)-x^*\right\|^2+\left\|y(\tau)-y^*\right\|^2\right) \mathrm{d}\tau \\
 			&\quad - \int_{t_0}^{t} \tau\left(\frac{1}{\theta}+1-\tau\alpha(\tau)\right) \left(\left\|\dot{x}(\tau)\right\|^2+\left\|\dot{y}(\tau)\right\|^2\right) \mathrm{d}\tau \\
 			&\leq \mathcal{E}(t_0) + \int_{t_0}^{t} \frac{1}{2\theta}\tau\beta(\tau)\epsilon(\tau) \left(\left\|x^*\right\|^2+\left\|y^*\right\|^2\right) \mathrm{d}\tau.
 		\end{aligned}
 	\end{equation}
 	Under conditions \eqref{tiaojian2}, \eqref{tiaojian3}, and \eqref{tiaojian4}, the following inequalities hold:
 	\begin{flalign}
 		&\ (a)\, 2t\beta(t)+t^2\dot{\beta}(t)-\frac{1}{\theta}t\beta(t) \leq 0;&\nonumber\\
 		&\ (b)\, t\dot{\alpha}(t)+\alpha(t)-t\beta(t)\epsilon(t) \leq 0;&\nonumber\\
 		&\ (c)\, \left(2t\beta(t)+t^2\dot{\beta}(t)-\frac{1}{\theta}t\beta(t)\right)\epsilon(t)+t^2\beta(t)\dot{\epsilon}(t) \leq 0;&\nonumber\\
 		&\ (d)\, t\left(\frac{1}{\theta}+1-t\alpha(t)\right) \leq 0.&\nonumber
 	\end{flalign}
    Inequalities $(a)$ and $(b)$ follow from \eqref{tiaojian3} and \eqref{tiaojian4}, respectively. Since $\epsilon(t)\geq 0$ and $\dot{\epsilon}(t)\leq 0$, inequality $(c)$ holds, while inequality $(d)$ follows from \eqref{tiaojian2}. In addition, \eqref{eq19} gives $\mathcal{L}(x(t),y^*)-\mathcal{L}(x^*,y(t))\geq0$. Together with the integrability condition $\int_{t_0}^{+\infty}t\beta(t)\epsilon(t)\mathrm{d}t<+\infty$ and inequality \eqref{eq50}, the above sign properties imply that $\mathcal{E}(t)$ is bounded on $[t_0,+\infty)$. Thus, $\mathcal{E}(t)$ serves as a Lyapunov function in the present analysis. In particular, there exists a constant $D_1\geq 0$ such that
 	
 	\begin{equation*}
 		\mathcal{E}(t) \leq \mathcal{E}(t_0) + \int_{t_0}^{t} \frac{1}{2\theta}\tau\beta(\tau)\epsilon(\tau) \left(\|x^*\|^2+\|y^*\|^2\right) \mathrm{d}\tau \leq D_1, \quad \forall t \geq t_0.
 	\end{equation*}
 	By \eqref{eq23}, $\left\|\frac{1}{\theta}\left(x(t)-x^*\right)+t\dot{x}(t)\right\|$ and $\left\|\frac{1}{\theta}\left(y(t)-y^*\right)+t\dot{y}(t)\right\|$ are bounded for all $t\geq t_0$. Applying Lemma \ref{limit1} then shows that the trajectory $(x(t),y(t))$ is bounded on $[t_0,+\infty)$.
 	
 	Note that
 	\begin{equation*}
 		\left\|t\dot{x}(t)\right\|^2 \leq 2\left\|\frac{1}{\theta}\left(x(t)-x^*\right)\right\|^2 + 2\left\|\frac{1}{\theta}\left(x(t)-x^*\right)+t\dot{x}(t)\right\|^2.
 	\end{equation*}
 	The boundedness of $\left\|\frac{1}{\theta}\left(x(t)-x^*\right)+t\dot{x}(t)\right\|$, together with the boundedness of the trajectory $(x(t),y(t))$, yields
 	\begin{equation*}
 		\left\| \dot{x}(t) \right\| = \mathcal{O}\left( \frac{1}{t} \right).
 	\end{equation*}
 	The same argument yields
 	\begin{equation*}
 		\left\| \dot{y}(t) \right\| = \mathcal{O}\left( \frac{1}{t} \right).
 	\end{equation*}
 	Moreover, the definition of $\mathcal{E}(t)$ gives
 	\begin{equation*}
 		t^2\beta(t) \left(\mathcal{L}\left(x(t),y^*\right) - \mathcal{L}\left(x^*,y(t)\right)\right) \leq \mathcal{E}(t).
 	\end{equation*}
 	The boundedness of $\mathcal{E}(t)$ yields
 	\begin{equation}\label{eq59}
 		\mathcal{L}\left(x(t),y^*\right) - \mathcal{L}\left(x^*,y(t)\right) = \mathcal{O}\left(\frac{1}{t^2\beta(t)}\right).
 	\end{equation}
 	Since $\lim\limits_{t\to+\infty}t^2\beta(t)=+\infty$, it follows that the primal-dual gap converges to zero. Moreover, let $L_{max}=\max\{L_1,L_2\}$. Since $f$ and $g$ are convex and $\nabla f$ and $\nabla g$ are $L_1$- and $L_2$-Lipschitz continuous, respectively, we have
 	
 	\begin{equation*}
 		f(x(t)) \geq f(x^*) + \langle \nabla f(x^*), x(t) - x^* \rangle + \frac{1}{2L_{max}} \left\|\nabla f(x(t)) - \nabla f(x^*)\right\|^2
 	\end{equation*}
 	and
 	\begin{equation*}
 		g(y(t)) \geq g(y^*) + \langle \nabla g(y^*), y(t) - y^* \rangle + \frac{1}{2L_{max}} \left\|\nabla g(y(t)) - \nabla g(y^*)\right\|^2.
 	\end{equation*}
 	Using the definition of $\mathcal{L}$ in \eqref{eq13}, we have
 	\begin{equation}\label{new1}
 		\begin{aligned}
 			\mathcal{L}\left(x(t), y^*\right) - \mathcal{L}\left(x^*, y(t)\right) &= f(x(t)) - f(x^*) + g(y(t)) - g(y^*) + \langle Kx(t), y^* \rangle - \langle Kx^*, y(t) \rangle  \\
 			&\geq \langle \nabla f(x^*), x(t) - x^* \rangle + \frac{1}{2L_{max}} \left\| \nabla f(x(t)) - \nabla f(x^*) \right\|^2 + \langle Kx(t),y^* \rangle \\
 			&\quad - \langle \nabla g(y^*), y^* - y(t) \rangle + \frac{1}{2L_{max}} \left\| \nabla g(y(t)) - \nabla g(y^*) \right\|^2 - \langle Kx^*,y(t) \rangle \\
 			&= \frac{1}{2L_{max}} \left\| \nabla f(x(t)) - \nabla f(x^*) \right\|^2 + \frac{1}{2L_{max}} \left\| \nabla g(y(t)) - \nabla g(y^*) \right\|^2,
 		\end{aligned}
 	\end{equation}
    where the last equality follows from \eqref{eq20}. Combining \eqref{eq59} with \eqref{new1} gives
    \begin{equation*}
 	\left\| \nabla f(x(t)) - \nabla f(x^*) \right\| = \mathcal{O}\left( \frac{1}{t \sqrt{\beta(t)}} \right),\qquad \left\| \nabla g(y(t)) - \nabla g(y^*) \right\| = \mathcal{O}\left( \frac{1}{t \sqrt{\beta(t)}} \right).
    \end{equation*}
    
    Moreover, \eqref{eq50} gives
 	\begin{equation}\label{eq60}
 		\begin{aligned}
 			&\mathcal{E}(t) -\int_{t_0}^{t} \left(\tau^2\dot{\beta}(\tau)+2\tau\beta(\tau)-\frac{1}{\theta}\tau\beta(\tau)\right) \left(\mathcal{L}(x(\tau),y^*)-\mathcal{L}(x^*,y(\tau))\right) \mathrm{d}\tau \\
 			&\quad - \int_{t_0}^{t} \tau\left(\frac{1}{\theta}+1-\tau\alpha(\tau)\right) \left(\left\|\dot{x}(\tau)\right\|^2+\left\|\dot{y}(\tau)\right\|^2\right) \mathrm{d}\tau \\
 			&\leq \mathcal{E}(t_0)+ \int_{t_0}^{t} \frac{1}{2\theta}\tau\beta(\tau)\epsilon(\tau) \left(\left\|x^*\right\|^2+\left\|y^*\right\|^2\right) \mathrm{d}\tau.
 		\end{aligned}
 	\end{equation}
 	If condition \eqref{tiaojian2} is strengthened to \eqref{tiaojian2'}, then \eqref{eq60}, together with $\int_{t_0}^{+\infty}t\beta(t)\epsilon(t)\mathrm{d}t<+\infty$ and the nonnegativity of $\mathcal{E}(t)$ for $t\geq t_0$, gives
    \begin{equation*}
    	\int_{t_0}^{+\infty} t\left(t\alpha(t)-\frac{1}{\theta}-1\right) \left( \left\| \dot{x}(t) \right\|^2 + \left\| \dot{y}(t) \right\|^2 \right) \mathrm{d}t < +\infty.
    \end{equation*}
    Furthermore, if \eqref{tiaojian3} is strengthened to \eqref{tiaojian3'}, then \eqref{eq60} yields
    \begin{equation}\label{new2}
    	\int_{t_0}^{+\infty} \left( (1 - 2\theta)\beta(t) - \theta t \dot{\beta}(t) \right) t \left(\mathcal{L}\left(x(t),y^*\right)-\mathcal{L}\left(x^*,y(t)\right)\right) \mathrm{d}t < +\infty.
    \end{equation}
    Combining \eqref{new1} with \eqref{new2} yields
    \begin{equation*}
    	\int_{t_0}^{+\infty} \left( (1 - 2\theta)\beta(t) - \theta t \dot{\beta}(t) \right) t \left\| \nabla f(x(t)) - \nabla f(x^*) \right\|^2 \mathrm{d}t < +\infty
    \end{equation*}
    and
    \begin{equation*}
    	\int_{t_0}^{+\infty} \left( (1 - 2\theta)\beta(t) - \theta t \dot{\beta}(t) \right) t \left\| \nabla g(y(t)) - \nabla g(y^*) \right\|^2 \mathrm{d}t < +\infty.
    \end{equation*}

 	This completes the proof of Theorem \ref{theorem1}.
\end{proof}

\begin{remark}\label{remarkalpha}
	If the Tikhonov regularization term is formally removed by setting $\epsilon(t)\equiv0$, the proposed dynamical system \eqref{eq15} reduces, under suitable choices of the remaining coefficients, to several classical unregularized models. In particular, after suppressing the dual coupling, one recovers the accelerated vanishing-damping dynamical system (AVD$_\alpha$) proposed by Su et al. \cite{ref9} for unconstrained optimization problems. Moreover, by taking $\beta(t)\equiv1$, $\theta=\frac{1}{2}$, and $\alpha(t)=\frac{\alpha}{t}$ with $\alpha\geq3$, one recovers the convergence result of Theorem 1 in \cite{ref40}. Likewise, by setting $\alpha(t)=\frac{\alpha}{t}$, one recovers the corresponding conclusion of Theorem~6 in \cite{ref42}.
\end{remark}

\subsection{Case $ \int_{t_0}^{+\infty} \frac{\beta(t)\epsilon(t)}{t} \mathrm{d}t < +\infty $}\label{subsec2}
In this subsection, we investigate the asymptotic convergence properties of the dynamical system \eqref{eq15} under the condition $\int_{t_0}^{+\infty}\frac{\beta(t)\epsilon(t)}{t}\mathrm{d}t<+\infty$, which corresponds to the case in which the Tikhonov regularization parameter $\epsilon(t)$ decays to zero more slowly. For any fixed $(x^*,y^*) \in \Omega$, we define the energy function $ \bar{\mathcal{E}}(t) $ by
\begin{equation}\label{eq62}
	\bar{\mathcal{E}}(t) = \bar{\mathcal{E}}_1(t) + \bar{\mathcal{E}}_2(t) + \bar{\mathcal{E}}_3(t),
\end{equation}
where
\begin{align*}
	\bar{\mathcal{E}}_1(t) = \beta(t) \left(\mathcal{L}\left(x(t),y^*\right)-\mathcal{L}\left(x^*,y(t)\right)+\frac{\epsilon(t)}{2}\left(\left\|x(t)\right\|^2+\left\|y(t)\right\|^2\right)\right), 
	\\
	\bar{\mathcal{E}}_2(t) = \frac{1}{2} \left\| \bar{b}(t)\left(x(t)-x^*\right)+\dot{x}(t) \right\|^2 + \frac{\bar{m}(t)}{2}\left\|x(t)-x^*\right\|^2,
	\\
	\bar{\mathcal{E}}_3(t) = \frac{1}{2} \left\| \bar{b}(t)\left(y(t)-y^*\right)+\dot{y}(t) \right\|^2 + \frac{\bar{m}(t)}{2}\left\|y(t)-y^*\right\|^2,
\end{align*}
with $\bar{b}(t) := \frac{1}{\theta t}$, $\bar{m}(t) = \frac{\theta t\alpha(t)-\theta-1}{\theta^2t^2}$.

\begin{lemma}\label{lemma2}
	Suppose that Assumption \ref{assumption} and condition \eqref{tiaojian2} hold. Then, for any global trajectory $(x(t),y(t))$ of the dynamical system \eqref{eq15} and any saddle point $(x^*,y^*)\in\Omega$ of the convex-concave bilinear saddle point problem \eqref{eq13}, we have
    \begin{equation}\label{eq75}
	    \begin{aligned}
		    \frac{2}{t}\bar{\mathcal{E}}(t) + \dot{\bar{\mathcal{E}}}(t) &\leq \left(\frac{2}{t}\beta(t)+\dot{\beta}(t)-\frac{1}{\theta t}\beta(t)\right) \left(\mathcal{L}(x(t),y^*)-\mathcal{L}(x^*,y(t))\right) \\
		    &\quad +\frac{1}{2}\left(\left(\frac{2}{t}\beta(t)+\dot{\beta}(t)-\frac{1}{\theta t}\beta(t)\right)\epsilon(t)+\beta(t)\dot{\epsilon}(t)\right) \left(\left\|x(t)\right\|^2+\left\|y(t)\right\|^2\right) \\
		    &\quad +\frac{1}{2\theta t}\left(\frac{t\dot{\alpha}(t)+\alpha(t)}{t}-\beta(t)\epsilon(t)\right) \left(\left\|x(t)-x^*\right\|^2+\left\|y(t)-y^*\right\|^2\right) \\
		    &\quad +\left(\frac{1+\theta}{\theta t}-\alpha(t)\right) \left(\left\|\dot{x}(t)\right\|^2+\left\|\dot{y}(t)\right\|^2\right) + \frac{1}{2\theta t}\beta(t)\epsilon(t)\left(\left\|x^*\right\|^2+\left\|y^*\right\|^2\right) .
	    \end{aligned}
    \end{equation}
\end{lemma}

\begin{proof}
    Observe that
	\begin{equation*}
		\begin{aligned}
			\frac{1}{2}\left\| \bar{b}(t)\left(x(t)-x^*\right) + \dot{x}(t) \right\|^2 &= \frac{1}{2}\bar{b}(t)^2\left\|x(t)-x^*\right\|^2 + \frac{1}{2}\left\|\dot{x}(t)\right\|^2 + \bar{b}(t)\langle x(t)-x^*,\dot{x}(t) \rangle \\
			&= \frac{1}{2\theta^2t^2}\left\|x(t)-x^*\right\|^2 + \frac{1}{2}\left\|\dot{x}(t)\right\|^2 + \frac{1}{\theta t}\langle x(t)-x^*,\dot{x}(t) \rangle.
		\end{aligned}
	\end{equation*}
	Hence,
	\begin{equation}\label{eq66}
		\bar{\mathcal{E}}_2(t) = \frac{t\alpha(t)-1}{2\theta t^2}\left\|x(t)-x^*\right\|^2 + \frac{1}{2}\left\|\dot{x}(t)\right\|^2 + \frac{1}{\theta t}\langle x(t)-x^*,\dot{x}(t)\rangle .
	\end{equation}
	Similarly,
	\begin{equation}\label{eq67}
		\bar{\mathcal{E}}_3(t) = \frac{t\alpha(t)-1}{2\theta t^2}\left\|y(t)-y^*\right\|^2 + \frac{1}{2}\left\|\dot{y}(t)\right\|^2 + \frac{1}{\theta t}\langle y(t)-y^*,\dot{y}(t)\rangle .
	\end{equation}
	Equations \eqref{eq62}, \eqref{eq66}, and \eqref{eq67} yield the equivalent representation
	\begin{equation}\label{eq70}
		\begin{aligned}
			\bar{\mathcal{E}}(t) &= \beta(t)\left(\mathcal{L}(x(t),y^*)-\mathcal{L}(x^*,y(t))\right) +\frac{1}{2}\beta(t)\epsilon(t)\left(\left\|x(t)\right\|^2+\left\|y(t)\right\|^2\right) \\
			&\quad +\frac{t\alpha(t)-1}{2\theta t^2} \left(\left\|x(t)-x^*\right\|^2+\left\|y(t)-y^*\right\|^2\right) \\
			&\quad + \frac{1}{2}\left(\left\|\dot{x}(t)\right\|^2+\left\|\dot{y}(t)\right\|^2\right) + \frac{1}{\theta t}(\langle x(t)-x^*,\dot{x}(t)\rangle+\langle y(t)-y^*,\dot{y}(t)\rangle).			
		\end{aligned}
	\end{equation}
	Differentiating $\bar{\mathcal{E}}(t)$ with respect to $t$ gives
	\begin{equation}\label{eq71}
		\begin{aligned}
			\dot{\bar{\mathcal{E}}}(t) &= \dot{\beta}(t) \left(\mathcal{L}(x(t),y^*)-\mathcal{L}(x^*,y(t))\right)+\left(\frac{1}{\theta t}-\alpha(t)\right)\left(\left\|\dot{x}(t)\right\|^2+\left\|\dot{y}(t)\right\|^2\right) \\
			&\quad +\frac{1}{2}\left(\dot{\beta}(t)\epsilon(t)+\beta(t)\dot{\epsilon}(t)\right)\left(\left\|x(t)\right\|^2+\left\|y(t)\right\|^2\right) \\
			&\quad +\frac{t^2\dot{\alpha}(t)-t\alpha(t)+2}{2\theta t^3} (\left\|x(t)-x^*\right\|^2+\left\|y(t)-y^*\right\|^2) \\
			&\quad -\frac{2}{\theta t^2} \left(\langle x(t)-x^*,\dot{x}(t)\rangle + \langle y(t)-y^*,\dot{y}(t)\rangle \right) \\
			&\quad -\beta(t) \left( \langle K^*(y(t)-y^*+\theta t\dot{y}(t)),\dot{x}(t) \rangle - \langle K(x(t)-x^*+\theta t\dot{x}(t)),\dot{y}(t) \rangle \right) \\
			&\quad -\frac{1}{\theta t}\beta(t) \left( \langle \nabla_x\mathcal{L}_t\left(x(t),y^*\right),x(t)-x^*\rangle + \langle \nabla_y\mathcal{L}_t\left(x^*,y(t)\right),y(t)-y^*\rangle \right) \\
			&\quad -\frac{1}{\theta t}\beta(t) \left( \langle K^*(y(t)-y^*+\theta t\dot{y}(t)),x(t)-x^* \rangle - \langle K(x(t)-x^*+\theta t\dot{x}(t)),y(t)-y^* \rangle \right) .
		\end{aligned}
	\end{equation}
	Using the same argument as in the proof of Lemma \ref{lemma1} together with \eqref{eq70} and \eqref{eq71}, we obtain
	\begin{equation*}\label{eq74}
		\begin{aligned}
			\frac{2}{t}\bar{\mathcal{E}}(t) + \dot{\bar{\mathcal{E}}}(t) &\leq \frac{1}{2\theta t}\beta(t)\epsilon(t)\left(\left\|x^*\right\|^2+\left\|y^*\right\|^2\right) +\left(\frac{1}{\theta t}-\alpha(t)+\frac{1}{t}\right) \left(\left\|\dot{x}(t)\right\|^2+\left\|\dot{y}(t)\right\|^2\right) \\
			&\quad +\left(\frac{2}{t}\beta(t)+\dot{\beta}(t)-\frac{1}{\theta t}\beta(t)\right) \left(\mathcal{L}(x(t),y^*)-\mathcal{L}(x^*,y(t))\right) \\
			&\quad +\frac{1}{2}\left(\left(\frac{2}{t}\beta(t)+\dot{\beta}(t)-\frac{1}{\theta t}\beta(t)\right)\epsilon(t)+\beta(t)\dot{\epsilon}(t)\right) \left(\left\|x(t)\right\|^2+\left\|y(t)\right\|^2\right) \\
			&\quad +\frac{1}{2\theta t}\left(\frac{t\dot{\alpha}(t)+\alpha(t)}{t}-\beta(t)\epsilon(t)\right) \left(\left\|x(t)-x^*\right\|^2+\left\|y(t)-y^*\right\|^2\right).
		\end{aligned}
	\end{equation*}
	
	This completes the proof of Lemma \ref{lemma2}.
\end{proof}

\begin{theorem}\label{theorem2}
	Suppose that Assumption \ref{assumption} and conditions \eqref{tiaojian2}--\eqref{tiaojian4} hold. If
	$\int_{t_0}^{+\infty}\frac{\beta(t)\epsilon(t)}{t}\mathrm{d}t<+\infty$, then, for any trajectory $(x(t),y(t))$ of the dynamical system \eqref{eq15} and any saddle point $(x^*,y^*)\in\Omega$ of the convex-concave bilinear saddle point problem \eqref{eq13}, the following statements hold:
	\begin{enumerate}
		\item[(1)] If there exists a positive constant $D_2$ such that $D_2\leq\theta t\alpha(t)-\theta-1$ for all $t\geq t_0$, then
		\[
		\lim\limits_{t\to+\infty}\left\|\dot{x}(t)\right\|=0,
		\qquad
		\lim\limits_{t\to+\infty}\left\|\dot{y}(t)\right\|=0.
		\]
		
		\item[(2)] If $ \liminf\limits_{t\to+\infty}\beta(t)>0$, then
		\[
		\lim_{t\to+\infty}\left(\mathcal{L}\left(x(t),y^*\right)-\mathcal{L}\left(x^*,y(t)\right)\right)=0.
		\]
		In particular, if $\lim\limits_{t\to+\infty}\beta(t)=+\infty$, then
		\[
		\mathcal{L}\left(x(t),y^*\right)-\mathcal{L}\left(x^*,y(t)\right)=o\left(\frac{1}{\beta(t)}\right),
		\]
		and
		\[
		\left\|\nabla f(x(t))-\nabla f(x^*)\right\|=o\left(\frac{1}{\sqrt{\beta(t)}}\right),
		\qquad
		\left\|\nabla g(y(t))-\nabla g(y^*)\right\|=o\left(\frac{1}{\sqrt{\beta(t)}}\right).
		\]
	\end{enumerate}
\end{theorem}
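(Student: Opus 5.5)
The approach is to integrate the differential inequality of Lemma \ref{lemma2} with the integrating factor $t^2$. Multiplying \eqref{eq75} by $t^2$ gives $\frac{\mathrm{d}}{\mathrm{d}t}\left(t^2\bar{\mathcal{E}}(t)\right)=t^2\left(\frac{2}{t}\bar{\mathcal{E}}(t)+\dot{\bar{\mathcal{E}}}(t)\right)$. We then check the sign of each coefficient on the right-hand side:
\begin{itemize}
\item the coefficient of the gap is $\frac{1}{\theta t}\left((2\theta-1)\beta+\theta t\dot\beta\right)\le 0$ by \eqref{tiaojian3};
\item the coefficient of $\|x\|^2+\|y\|^2$ is nonpositive by \eqref{tiaojian3}, $\epsilon\ge0$ and $\dot\epsilon\le0$;
\item the coefficient of $\|x-x^*\|^2+\|y-y^*\|^2$ is nonpositive by \eqref{tiaojian4};
\item the coefficient of the velocities is nonpositive by \eqref{tiaojian2}.
\end{itemize}
Dropping these terms yields
\[
\frac{\mathrm{d}}{\mathrm{d}t}\left(t^2\bar{\mathcal{E}}(t)\right)\le \frac{t\beta(t)\epsilon(t)}{2\theta}C^*,\qquad C^*:=\|x^*\|^2+\|y^*\|^2 .
\]
Integrating from $t_0$ to $t$ and dividing by $t^2$ gives
\[
\bar{\mathcal{E}}(t)\le \frac{t_0^2\bar{\mathcal{E}}(t_0)}{t^2}+\frac{C^*}{2\theta t^2}\int_{t_0}^t \tau^2\frac{\beta(\tau)\epsilon(\tau)}{\tau}\,\mathrm{d}\tau .
\]
Applying Lemma \ref{limit} with $\varphi(\tau)=\tau^2$ and $\zeta=\beta\epsilon/\tau\in L^1$ shows that the right-hand side tends to $0$. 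This requires $\bar{\mathcal E}\ge 0$, which I will verify from the definition: $\bar{m}(t)\ge0$ under \eqref{tiaojian2}, and the gap and $\epsilon$-terms are nonnegative. Hence $\limsup_{t\to+\infty}\bar{\mathcal{E}}(t)\le 0$, and since $\bar{\mathcal E}\ge 0$, $\lim_{t\to+\infty}\bar{\mathcal{E}}(t)=0$. This is the central estimate.

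For (2), every summand of $\bar{\mathcal{E}}$ is nonnegative, so $\beta(t)\left(\mathcal{L}(x(t),y^*)-\mathcal{L}(x^*,y(t))\right)\le\bar{\mathcal{E}}(t)\to0$. If $\liminf\beta>0$, then $\beta$ is bounded below by a positive constant for large $t$, and the gap tends to $0$. If $\beta\to+\infty$, then $\beta\cdot\text{gap}\to0$ is precisely the $o(1/\beta(t))$ statement. The gradient estimates then follow from inequality \eqref{new1}, which bounds $\frac{1}{2L_{max}}\left(\|\nabla f(x)-\nabla f(x^*)\|^2+\|\nabla g(y)-\nabla g(y^*)\|^2\right)$ by the gap. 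Taking square roots gives $o(1/\sqrt{\beta(t)})$.

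For (1), use the hypothesis $\theta t\alpha-\theta-1\ge D_2$, so that $\bar m(t)\ge D_2/(\theta^2t^2)$. Since $\bar{\mathcal E}_2\to0$, we get $\frac{D_2}{2\theta^2t^2}\|x-x^*\|^2\to0$, hence $\frac1t\|x(t)-x^*\|\to0$. Also $\left\|\frac{1}{\theta t}(x-x^*)+\dot x\right\|\to0$. By the triangle inequality,
\[
\|\dot x\|\le \left\|\bar b(x-x^*)+\dot x\right\|+\frac{1}{\theta t}\|x-x^*\|\to0,
\]
and the same argument applies to $y$. Note that this uses only $\bar m\ge D_2/(\theta^2 t^2)$, with no boundedness of the trajectory, which is not available in this regime.

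The main obstacle is the sign bookkeeping, specifically confirming that the coefficients in \eqref{eq75} really are controlled by \eqref{tiaojian2}--\eqref{tiaojian4} after multiplication by $t^2$. In particular, the gap coefficient must factor exactly as $\frac{\beta}{\theta t}\left(2\theta-1+\theta t\dot\beta/\beta\right)$. The argument also depends on the nonnegativity of $\bar{\mathcal E}$, which rests on $\bar m\ge 0$. Both points are short checks.
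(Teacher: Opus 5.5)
Your proposal is correct and follows the paper's proof essentially step by step. It uses the same sign checks on the coefficients in \eqref{eq75} via \eqref{tiaojian2}--\eqref{tiaojian4}, the same integrating factor $t^2$, and Lemma~\ref{limit} with $\varphi(t)=t^2$ and $\zeta=\beta\epsilon/t$ to obtain $\bar{\mathcal{E}}(t)\to0$. It then extracts the velocity, gap, and gradient estimates from the nonnegative summands of $\bar{\mathcal{E}}$ together with \eqref{new1}, exactly as the paper does.
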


\begin{proof}
	Under the conditions on $\alpha(t)$, $\beta(t)$, and $\epsilon(t)$ in Theorem \ref{theorem2}, we have
	\begin{flalign}
		&\ (a)\, \frac{2}{t}\beta(t)+\dot{\beta}(t)-\frac{1}{\theta t}\beta(t) \leq 0;&\nonumber\\
		&\ (b)\, \frac{1}{2\theta t}\left(\frac{t\dot{\alpha}(t)+\alpha(t)}{t}-\beta(t)\epsilon(t)\right) \leq 0;&\nonumber\\
		&\ (c)\, \frac{1}{2}\left(\left(\frac{2}{t}\beta(t)+\dot{\beta}(t)-\frac{1}{\theta t}\beta(t)\right)\epsilon(t)+\beta(t)\dot{\epsilon}(t)\right) \leq 0;&\nonumber\\
		&\ (d)\, \frac{1+\theta}{\theta t}-\alpha(t) \leq 0.&\nonumber
	\end{flalign}
	Inequalities $(a)$ and $(b)$ follow from \eqref{tiaojian3} and \eqref{tiaojian4}, respectively. Inequality $(c)$ follows from $(a)$, the positivity of $\beta(t)$ and $\epsilon(t)$, and the fact that $\epsilon(t)$ is nonincreasing, while $(d)$ follows from \eqref{tiaojian2}. Substituting these inequalities into \eqref{eq75} gives, for all $t\geq t_0$,
	\begin{equation}\label{eq78}
		\frac{2}{t}\bar{\mathcal{E}}(t)+\dot{\bar{\mathcal{E}}}(t)
		\leq
		\frac{1}{2\theta t}\beta(t)\epsilon(t)\left(\left\|x^*\right\|^2+\left\|y^*\right\|^2\right).
	\end{equation}
	Multiplying \eqref{eq78} by $t^2$ and integrating over $[t_0,t]$ gives
	\begin{equation}\label{eq79}
		\bar{\mathcal{E}}(t)
		\leq
		\frac{t_0^2\bar{\mathcal{E}}(t_0)}{t^2}+\frac{\left\|x^*\right\|^2+\left\|y^*\right\|^2}{2\theta t^2}\int_{t_0}^{t}\tau\beta(\tau)\epsilon(\tau)\mathrm{d}\tau.
	\end{equation}
	Applying Lemma \ref{limit} with $a=t_0$, $\varphi(t)=t^2$, and $\zeta(t)=\frac{\beta(t)\epsilon(t)}{t}$ yields
	\begin{equation}\label{eq80}
		\lim_{t\to+\infty}\frac{1}{t^2}\int_{t_0}^{t}\tau\beta(\tau)\epsilon(\tau)\mathrm{d}\tau=0,
	\end{equation}
	where the condition $\int_{t_0}^{+\infty}\frac{\beta(t)\epsilon(t)}{t}\,\mathrm{d}t<+\infty$ has been used. Since $\bar{\mathcal{E}}(t)\geq0$, it follows from \eqref{eq79} and \eqref{eq80} that
	\begin{equation*}
		\lim_{t\to+\infty}\bar{\mathcal{E}}(t)=0.
	\end{equation*}
	It follows from the definition of the Lyapunov function $\bar{\mathcal{E}}(t)$ that
	\begin{align}
		\lim_{t\to+\infty}\left\|\frac{1}{\theta t}\left(x(t)-x^*\right)+\dot{x}(t)\right\|=0,
		\label{eq74'}
		\\
		\lim_{t\to+\infty}\frac{\theta t\alpha(t)-\theta-1}{2\theta^2t^2}\left\|x(t)-x^*\right\|^2=0.
		\label{eq76'}
	\end{align}
	Under the additional assumption in statement (1), there exists a constant $D_2$ such that
	\begin{equation*}
		D_2 \leq \theta t\alpha(t)-\theta-1.
	\end{equation*}
	Hence,
	\begin{equation*}
		0 \leq \frac{D_2}{2\theta^2t^2}\left\|x(t)-x^*\right\|^2 \leq \frac{\theta t\alpha(t)-\theta-1}{2\theta^2t^2}\left\|x(t)-x^*\right\|^2.
	\end{equation*}
	Letting $t \to +\infty$ in the above inequality and using \eqref{eq76'}, the squeeze theorem yields
	\begin{equation}\label{eq77'}
		\lim_{t\to+\infty}\left\|\frac{1}{\theta t}\left(x(t)-x^*\right)\right\|=0.
	\end{equation}
	Note that
	\begin{equation}\label{new3'}
		\left\|\dot{x}(t)\right\|^2 \leq 2\left\|\frac{1}{\theta t}\left(x(t)-x^*\right)\right\|^2+2\left\|\frac{1}{\theta t}\left(x(t)-x^*\right)+\dot{x}(t)\right\|^2.
	\end{equation}
	Combining \eqref{eq74'}, \eqref{eq77'}, and \eqref{new3'} yields
	\begin{equation*}
		\lim_{t\to+\infty}\left\|\dot{x}(t)\right\|=0.
	\end{equation*}
	The same argument yields
	\begin{equation*}
		\lim_{t\to+\infty}\left\|\dot{y}(t)\right\|=0.
	\end{equation*}
	
	Moreover, the definition of $\bar{\mathcal{E}}(t)$ gives
	\begin{equation*}
		\lim_{t\to+\infty}
		\beta(t)\left(\mathcal{L}\left(x(t),y^*\right)-\mathcal{L}\left(x^*,y(t)\right)\right)=0.
	\end{equation*}
	If $\liminf\limits_{t\to+\infty}\beta(t)>0$, this implies that
	\begin{equation*}
		\lim_{t\to+\infty}
		\left(\mathcal{L}\left(x(t),y^*\right)-\mathcal{L}\left(x^*,y(t)\right)\right)=0.
	\end{equation*}
	In particular, if $\lim\limits_{t\to+\infty}\beta(t)=+\infty$, then
	\begin{equation}\label{eq83}
		\mathcal{L}\left(x(t),y^*\right)-\mathcal{L}\left(x^*,y(t)\right)=o\left(\frac{1}{\beta(t)}\right).
	\end{equation}
	Finally, \eqref{new1} and \eqref{eq83}, together with the positivity of $\beta(t)$, yield
	\begin{equation*}
		\left\|\nabla f(x(t))-\nabla f(x^*)\right\|
		= o\left(\frac{1}{\sqrt{\beta(t)}}\right),
		\qquad
		\left\|\nabla g(y(t))-\nabla g(y^*)\right\|
		= o\left(\frac{1}{\sqrt{\beta(t)}}\right).
	\end{equation*}

	This completes the proof of Theorem \ref{theorem2}.
\end{proof}

Under the second weighted integrability condition, corresponding to slower Tikhonov decay, the present framework also covers the subcritical damping regime $\alpha(t)=\alpha/t^q$, where $0<q<1$.
\begin{corollary}\label{cor:q_damping}
	Let $t_0=1$ and
	\[
	\alpha(t)=\frac{\alpha}{t^q}, \qquad \beta(t)=t^\beta, \qquad \epsilon(t)=\frac{c}{t^r},
	\]
	where $0<q<1$, $\beta\geq 0$, $\alpha>\beta+3$, $c\geq \alpha(1-q)$, $\beta<r\leq \beta+q+1$. Choose $\frac{1}{\alpha-1}<\theta\leq\frac{1}{\beta+2}$. Then, for every fixed saddle point
	$(x^*,y^*)\in\Omega$,
	\[
	\lim_{t\to+\infty}\|\dot{x}(t)\|=0, \qquad \lim_{t\to+\infty}\|\dot{y}(t)\|=0,\qquad \lim_{t\to+\infty}\left(\mathcal{L}(x(t),y^*)-\mathcal{L}(x^*,y(t))\right)=0.
	\]
	If $\beta>0$, then
	\[
	L(x(t),y^*)-L(x^*,y(t))=o\left(t^{-\beta}\right),
	\]
	and
	\[
	\|\nabla f(x(t))-\nabla f(x^*)\|=o\left(t^{-\frac{\beta}{2}}\right), \qquad \|\nabla g(y(t))-\nabla g(y^*)\|=o\left(t^{-\frac{\beta}{2}}\right).
	\]
\end{corollary}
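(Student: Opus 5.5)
The plan is to obtain the corollary directly from Theorem~\ref{theorem2}. With the power-type choices $\alpha(t)=\alpha t^{-q}$, $\beta(t)=t^\beta$, $\epsilon(t)=c\,t^{-r}$ on $[t_0,+\infty)=[1,+\infty)$, I will check in turn Assumption~\ref{assumption}, conditions \eqref{tiaojian2}--\eqref{tiaojian4}, the integrability condition $\int_{1}^{+\infty}\frac{\beta(t)\epsilon(t)}{t}\,\mathrm{d}t<+\infty$, and the extra hypotheses of statements (1) and (2). In each check the only fact about $t$ needed is $t\geq1$, so monotone powers of $t$ are bounded by their value at $t=1$.

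\emph{Assumption~\ref{assumption}.} All three coefficients are positive and $\mathcal{C}^1$ on $[1,+\infty)$. Since $r>\beta\geq0$, the function $\epsilon$ is nonincreasing and tends to $0$.

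\emph{Conditions \eqref{tiaojian2} and \eqref{tiaojian3}.} For \eqref{tiaojian2},
\[
1+\theta-\theta t\alpha(t)=1+\theta-\theta\alpha t^{1-q}\leq 1+\theta-\theta\alpha ,
\]
because $1-q>0$. The right-hand side is negative exactly when $\theta>\frac{1}{\alpha-1}$. For \eqref{tiaojian3},
\[
(2\theta-1)\beta(t)+\theta t\dot\beta(t)=t^\beta\bigl(2\theta-1+\theta\beta\bigr),
\]
which is nonpositive exactly when $\theta\leq\frac{1}{\beta+2}$. The admissible range for $\theta$ is nonempty because $\alpha>\beta+3$ gives $\frac{1}{\alpha-1}<\frac{1}{\beta+2}$.

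\emph{Condition \eqref{tiaojian4}.} This is the step that needs the most care, since it is where $c$ and $r$ enter. The left side is $\alpha(t)+t\dot\alpha(t)=\alpha(1-q)t^{-q}$ and the right side is $t\beta(t)\epsilon(t)=c\,t^{1+\beta-r}$. The inequality is therefore equivalent to
\[
\alpha(1-q)\leq c\,t^{1+\beta+q-r}.
\]
The exponent $1+\beta+q-r$ is nonnegative because $r\leq\beta+q+1$, so the right side is at least $c$ for $t\geq1$. Then $c\geq\alpha(1-q)$ closes the inequality.

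\emph{Integrability.} We have $\frac{\beta(t)\epsilon(t)}{t}=c\,t^{\beta-r-1}$, which is integrable on $[1,+\infty)$ precisely because $r>\beta$.

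\emph{Extra hypotheses.} For statement (1) of Theorem~\ref{theorem2}, set $D_2:=\theta\alpha-\theta-1$. This is positive by $\theta>\frac{1}{\alpha-1}$, and
\[
\theta t\alpha(t)-\theta-1=\theta\alpha t^{1-q}-\theta-1\geq D_2\qquad\text{for } t\geq1 .
\]
Hence $\|\dot x(t)\|\to0$ and $\|\dot y(t)\|\to0$. For statement (2), $\beta(t)=t^\beta\geq1$ gives $\liminf_{t\to+\infty}\beta(t)>0$, so the primal-dual gap tends to $0$.

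\emph{Rates when $\beta>0$.} In this case $\beta(t)\to+\infty$. The second part of statement (2) then gives the gap is $o(t^{-\beta})$, and the gradient differences are $o(t^{-\beta/2})$, which is the remaining conclusion of the corollary.
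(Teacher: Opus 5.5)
Your proposal is correct and follows the intended route. The paper gives no separate proof of this corollary; it presents it, like Theorems~\ref{teli1} and \ref{teli2}, as a direct application of Theorem~\ref{theorem2}, which is exactly what you do by checking Assumption~\ref{assumption}, conditions \eqref{tiaojian2}--\eqref{tiaojian4} on $[1,+\infty)$, the integrability of $c\,t^{\beta-r-1}$, and the extra hypotheses with $D_2=\theta(\alpha-1)-1$ (the positivity of $\epsilon$, left implicit, holds since $c\geq\alpha(1-q)>0$).
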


\subsection{Particular Cases}\label{subsec5}
To illustrate the scope and verifiability of the differential compatibility conditions, we next consider several representative families of coefficient functions. These examples show that the resulting admissible parameter class includes not only the classical inverse-time damping profile but also perturbations of power-law profiles and genuinely non-power damping coefficients. Specifically, we let $\beta(t)=t^\beta$ ($\beta \geq 0$), and consider the following two cases according to the sign of $\alpha(t)+t\dot{\alpha}(t)$. \\
\textbf{Case 1:} $\bm{\alpha(t)+t\dot{\alpha}(t) \leq 0}$

In this case, let $\alpha(t)=\frac{\alpha}{t}+\frac{c}{t^p}$ and $\beta(t)=t^\beta$. Theorems \ref{theorem1} and \ref{theorem2} then lead to the following result.

\begin{theorem}\label{teli1}
	For the dynamical system \eqref{eq15}, let
	\begin{equation*}
		\alpha(t)=\frac{\alpha}{t}+\frac{c}{t^p},\qquad \beta(t)=t^\beta,\qquad \frac{1}{\alpha-1} < \theta < \frac{1}{\beta+2},
	\end{equation*}
	where $c$, $p$, $\alpha$, and $\beta$ are constants satisfying $c \geq 0$, $p>1$, $\alpha \geq 3$, $\alpha > \beta+3$, and $\beta \geq 0$. We also take $t_0 \geq 1$. Suppose that $\epsilon \colon [t_0,+\infty) \to (0,+\infty)$ is a nonincreasing $\mathcal{C}^1$ function satisfying $\lim\limits_{t\to+\infty}\epsilon(t)=0$, and let $\left(x(t),y(t)\right)$ be a global solution of the dynamical system \eqref{eq15}. Then, for any fixed $\left(x^*,y^*\right) \in \Omega$, the following statements hold: \\
	$(1)$\quad If $\int_{t_0}^{+\infty} t^{\beta + 1} \epsilon(t) \mathrm{d}t < +\infty$, then the trajectory $\left(x(t),y(t)\right)$ is bounded, and the following estimates hold:
	\[
	\begin{aligned}
		&\textbf{\emph{(Pointwise Estimates)}}
		\quad
		\left\{
		\begin{aligned}
			&\mathcal{L}\left(x(t),y^*\right)-\mathcal{L}\left(x^*,y(t)\right)=\mathcal{O}\left(t^{-\beta-2}\right), \\
			&\left\|\dot{x}(t)\right\|=\mathcal{O}\left(\frac{1}{t}\right),\quad \left\|\dot{y}(t)\right\|=\mathcal{O}\left(\frac{1}{t}\right), \\
			&\left\| \nabla f(x(t)) - \nabla f(x^*) \right\| = \mathcal{O}\left( t^{-\frac{\beta+2}{2}} \right), \\
			&\left\| \nabla g(y(t)) - \nabla g(y^*) \right\|= \mathcal{O}\left( t^{-\frac{\beta+2}{2}} \right); 
		\end{aligned}
		\right.
		\\[1ex]
		&\textbf{\emph{(Integral Estimates)}}
		\quad
		\left\{
		\begin{aligned}
			&\int_{t_0}^{+\infty} t \left( \| \dot{x}(t) \|^2 + \| \dot{y}(t) \|^2 \right) \mathrm{d}t < +\infty, \\
			&\int_{t_0}^{+\infty} t^{\beta+1} \left(\mathcal{L}\left(x(t),y^*\right)-\mathcal{L}\left(x^*,y(t)\right)\right) \mathrm{d}t < +\infty, \\
			&\int_{t_0}^{+\infty} t^{\beta+1} \left\| \nabla f(x(t)) - \nabla f(x^*) \right\|^2 \mathrm{d}t < +\infty, \\
			&\int_{t_0}^{+\infty} t^{\beta+1} \left\| \nabla g(y(t)) - \nabla g(y^*) \right\|^2 \mathrm{d}t < +\infty.
		\end{aligned}
		\right.
	\end{aligned}
	\]
	$(2)$\quad If $\int_{t_0}^{+\infty} t^{\beta - 1} \epsilon(t) \mathrm{d}t < +\infty$, then
	\begin{itemize}
		\item[(a)] $\lim\limits_{t \to \infty} \left\|\dot{x}(t)\right\| = 0, \; \lim\limits_{t \to \infty} \left\|\dot{y}(t)\right\| = 0; $
		\item[(b)] $\lim\limits_{t \to +\infty} \mathcal{L}\left(x(t),y^*\right) - \mathcal{L}\left(x^*,y(t)\right) = 0$. In particular, when $\beta>0$, we have $\mathcal{L}\left(x(t),y^*\right) - \mathcal{L}\left(x^*,y(t)\right) = o\left(t^{-\beta}\right)$, $\left\| \nabla f(x(t)) - \nabla f(x^*) \right\| = o\left(t^{-\frac{\beta}{2}}\right)$ and $\left\| \nabla g(y(t)) - \nabla g(y^*) \right\| = o\left(t^{-\frac{\beta}{2}}\right).$
	\end{itemize}
\end{theorem}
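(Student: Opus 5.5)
The plan is to verify that the choice $\alpha(t)=\frac{\alpha}{t}+\frac{c}{t^p}$, $\beta(t)=t^\beta$ together with $\frac{1}{\alpha-1}<\theta<\frac{1}{\beta+2}$ satisfies the strict conditions \eqref{tiaojian2'}, \eqref{tiaojian3'} and the non-strict condition \eqref{tiaojian4}. Part (1) then follows from Theorem \ref{theorem1}, and part (2) from Theorem \ref{theorem2}. The only remaining work is to translate the abstract weights into powers of $t$.

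\textbf{Condition \eqref{tiaojian2'}.} We have $\theta t\alpha(t)=\theta\alpha+\theta c t^{1-p}\geq\theta\alpha$ since $c\geq0$. Hence
\[
1+\theta-\theta t\alpha(t)\leq 1-\theta(\alpha-1)<0,
\]
because $\theta>\frac{1}{\alpha-1}$. Moreover, $\theta t\alpha(t)-\theta-1\geq\theta(\alpha-1)-1=:D_2>0$. This constant $D_2$ also supplies the hypothesis of Theorem \ref{theorem2}(1).

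\textbf{Condition \eqref{tiaojian3'}.} We compute
\[
(2\theta-1)t^\beta+\theta\beta t^\beta=t^\beta\bigl(\theta(\beta+2)-1\bigr)<0,
\]
since $\theta<\frac{1}{\beta+2}$.

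\textbf{Condition \eqref{tiaojian4}.} We have $\alpha(t)+t\dot\alpha(t)=(1-p)c\,t^{-p}\leq0$ because $p>1$ and $c\geq0$. The right-hand side $t\beta(t)\epsilon(t)$ is positive, so \eqref{tiaojian4} holds; this is exactly the Case 1 sign assumption. Finally, $t^2\beta(t)=t^{\beta+2}\to+\infty$, and $\int t\beta\epsilon\,\mathrm{d}t=\int t^{\beta+1}\epsilon\,\mathrm{d}t$, $\int \frac{\beta\epsilon}{t}\,\mathrm{d}t=\int t^{\beta-1}\epsilon\,\mathrm{d}t$. These match the integrability hypotheses of the two parts.

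\textbf{Part (1).} Theorem \ref{theorem1} gives boundedness and the rates $\mathcal{O}(t^{-\beta-2})$, $\mathcal{O}(1/t)$ and $\mathcal{O}(t^{-(\beta+2)/2})$. For the integral estimates, I bound the weights from below. First,
\[
t\bigl(t\alpha(t)-\tfrac1\theta-1\bigr)\geq t\Bigl(\alpha-\tfrac1\theta-1\Bigr)=\frac{D_2}{\theta}\,t,
\]
which yields $\int t(\|\dot x\|^2+\|\dot y\|^2)\,\mathrm{d}t<\infty$. Second,
\[
\bigl((1-2\theta)\beta(t)-\theta t\dot\beta(t)\bigr)t=\bigl(1-\theta(\beta+2)\bigr)t^{\beta+1},
\]
with a positive constant factor, which yields the three remaining integral bounds.

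\textbf{Part (2).} Theorem \ref{theorem2}(1) applies with $D_2$, giving $\|\dot x\|,\|\dot y\|\to0$. Since $\beta(t)=t^\beta\geq1$ on $t\geq t_0\geq1$, we have $\liminf\beta>0$, so Theorem \ref{theorem2}(2) gives convergence of the gap to zero. If $\beta>0$, then $\beta(t)\to\infty$, which yields the $o(t^{-\beta})$ and $o(t^{-\beta/2})$ rates.

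There is no genuine obstacle. The only point to watch is that the non-strict \eqref{tiaojian4} suffices: neither the strict integral estimates nor Theorem \ref{theorem2} uses \eqref{tiaojian4'}.
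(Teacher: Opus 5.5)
Your proposal is correct and follows the paper's proof essentially step for step. You verify \eqref{tiaojian2'}, \eqref{tiaojian3'}, and \eqref{tiaojian4} via $\alpha(t)+t\dot{\alpha}(t)=-(p-1)c\,t^{-p}\leq 0$ and $\bigl((\beta+2)\theta-1\bigr)t^{\beta}<0$, then apply Theorem \ref{theorem1} for part (1), using $t\alpha(t)\geq\alpha$ for the weighted velocity integral, and Theorem \ref{theorem2} with $D_2=\theta(\alpha-1)-1$ for part (2), exactly as the paper does, with explicit weight constants $D_2/\theta$ and $1-\theta(\beta+2)$ that the paper leaves implicit.
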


\begin{proof}
	A direct calculation gives
	\begin{align*}
		\alpha(t)+t\dot{\alpha}(t) = -\frac{(p-1)c}{t^p} \leq 0,
		\\
		(2\theta - 1) t \beta(t) + \theta t^2 \dot{\beta}(t) = \left((2 + \beta)\theta - 1\right) t^{\beta + 1} \leq 0.
	\end{align*}
	Together with $\frac{1}{\alpha-1} < \theta < \frac{1}{\beta+2}$, these identities show that, for $t \geq t_0 \geq 1$, the parameters satisfy conditions \eqref{tiaojian2'}, \eqref{tiaojian3'}, and \eqref{tiaojian4}.
	
	When $\int_{t_0}^{+\infty} t^{\beta + 1} \epsilon(t) \mathrm{d}t < +\infty$, Theorem \ref{theorem1} yields $\int_{t_0}^{+\infty} t\left(t\alpha(t)-\frac{1}{\theta}-1\right)\left(\left\|\dot{x}(t)\right\|^2+\left\|\dot{y}(t)\right\|^2\right) \mathrm{d}t < +\infty$. Together with $\frac{\alpha}{t} \leq \alpha(t)$, this estimate yields $\int_{t_0}^{+\infty} t \left( \| \dot{x}(t) \|^2 + \| \dot{y}(t) \|^2 \right) \mathrm{d}t < +\infty$. If $\int_{t_0}^{+\infty} t^{\beta - 1} \epsilon(t) \mathrm{d}t < +\infty$ and $\beta \geq 0$, it follows that $\liminf_{t \to +\infty} t^\beta > 0$. When $\beta > 0$, $\lim\limits_{t \to +\infty} \beta(t) = +\infty$. Moreover, $\theta t\alpha(t)-\theta-1 = \theta\alpha+\theta c t^{1-p}-\theta-1 \geq \theta(\alpha-1)-1>0$. Hence, the condition in Theorem \ref{theorem2} holds with $D_2=\theta(\alpha-1)-1$. Consequently, the conclusions of Theorem \ref{teli1} follow from Theorems \ref{theorem1} and \ref{theorem2}.
\end{proof}

\begin{remark}
	The family $\alpha(t)=\frac{\alpha}{t}+\frac{c}{t^p}$ includes the classical vanishing damping coefficient as a special case and also covers a broader class of viscous damping coefficients. When $c=0$, it reduces to the classical coefficient $\alpha(t)=\frac{\alpha}{t}$, for which $\alpha(t)+t\dot{\alpha}(t)=0$. For $c>0$ and $p>1$, however, $\alpha(t)+t\dot{\alpha}(t)=-\frac{(p-1)c}{t^p}<0$.
\end{remark}

%

\begin{flushleft}
	\textbf{Case 2:} $\bm{0 < \alpha(t)+t\dot{\alpha}(t) \leq t\beta(t)\epsilon(t)}$
	
	To demonstrate that the class of damping coefficients allowed by condition \eqref{tiaojian4} is not restricted to pure power laws or finite sums of power-law terms, we next consider a logarithmically corrected coefficient satisfying $0<\alpha(t)+t\dot{\alpha}(t)\leq t\beta(t)\epsilon(t)$.
\end{flushleft}

\begin{theorem}\label{teli2}
	For the dynamical system \eqref{eq15}, let
	\[
	\alpha(t)=\frac{\alpha-(\log t)^{-\sigma}}{t},
	\qquad
	\beta(t)=t^\beta,
	\qquad
	\epsilon(t)=\frac{1}{t^{\beta+2}(\log t)^r},
	\qquad
	\frac{1}{\alpha-1}<\theta<\frac{1}{\beta+2},
	\]
	where $\alpha>\beta+3$, $\beta\geq0$, $\sigma>0$, and $1<r<\sigma+1$. Let $t_0>1$ be sufficiently large, and let $(x(t),y(t))$ be a global solution of the dynamical system \eqref{eq15} on $[t_0,+\infty)$. Then, for any $(x^*,y^*)\in\Omega$, the trajectory $(x(t),y(t))$ is bounded, and the following estimates hold:
	\[
	\begin{aligned}
		&\textbf{\emph{(Pointwise Estimates)}}
		\quad
		\left\{
		\begin{aligned}
			&\mathcal{L}(x(t),y^*)-\mathcal{L}(x^*,y(t))
			=\mathcal{O}\left(t^{-\beta-2}\right),\\
			&\|\dot{x}(t)\|=\mathcal{O}\left(\frac{1}{t}\right),
			\qquad
			\|\dot{y}(t)\|=\mathcal{O}\left(\frac{1}{t}\right),\\
			&\|\nabla f(x(t))-\nabla f(x^*)\|
			=\mathcal{O}\left(t^{-\frac{\beta+2}{2}}\right),\\
			&\|\nabla g(y(t))-\nabla g(y^*)\|
			=\mathcal{O}\left(t^{-\frac{\beta+2}{2}}\right);
		\end{aligned}
		\right.
		\\[1ex]
		&\textbf{\emph{(Integral Estimates)}}
		\quad
		\left\{
		\begin{aligned}
			&\int_{t_0}^{+\infty}
			\left(
			(\alpha\theta-\theta-1)t
			-\frac{\theta t}{(\log t)^\sigma}
			\right)
			\left(
			\|\dot{x}(t)\|^2+\|\dot{y}(t)\|^2
			\right)\,\mathrm{d}t<+\infty,\\
			&\int_{t_0}^{+\infty}
			t^{\beta+1}
			\left(
			\mathcal{L}(x(t),y^*)-\mathcal{L}(x^*,y(t))
			\right)\,\mathrm{d}t<+\infty,\\
			&\int_{t_0}^{+\infty}
			t^{\beta+1}
			\|\nabla f(x(t))-\nabla f(x^*)\|^2\,\mathrm{d}t<+\infty,\\
			&\int_{t_0}^{+\infty}
			t^{\beta+1}
			\|\nabla g(y(t))-\nabla g(y^*)\|^2\,\mathrm{d}t<+\infty.
		\end{aligned}
		\right.
	\end{aligned}
	\]
	Moreover, $\lim\limits_{t\to+\infty}\|\dot{x}(t)\|=0$, $\lim\limits_{t\to+\infty}\|\dot{y}(t)\|=0$, $\lim\limits_{t\to+\infty}
	\left(\mathcal{L}(x(t),y^*)-\mathcal{L}(x^*,y(t))\right)=0$. In particular, when $\beta>0$,
	\[
	\mathcal{L}(x(t),y^*)-\mathcal{L}(x^*,y(t))=o\left(t^{-\beta}\right),
	\]
	and
	\[
	\|\nabla f(x(t))-\nabla f(x^*)\|=o\left(t^{-\frac{\beta}{2}}\right),
	\qquad
	\|\nabla g(y(t))-\nabla g(y^*)\|=o\left(t^{-\frac{\beta}{2}}\right).
	\]
\end{theorem}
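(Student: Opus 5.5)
The plan is to verify that, for $t_0$ large enough, the prescribed triple $(\alpha,\beta,\epsilon)$ together with $\theta$ satisfies \eqref{tiaojian2'}, \eqref{tiaojian3'}, \eqref{tiaojian4}, and both integrability conditions. Every conclusion then follows by reading off Theorem \ref{theorem1} and Theorem \ref{theorem2}, exactly as in the proof of Theorem \ref{teli1}. Throughout I take $t_0>e$, so that $\log t>1$ and all logarithmic powers are positive and monotone.

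\textbf{Step 1 (compatibility conditions).} Since $t\alpha(t)=\alpha-(\log t)^{-\sigma}$, condition \eqref{tiaojian2'} reads
\[
\theta(\alpha-1)-1>\theta(\log t)^{-\sigma}.
\]
Because $\theta>\frac{1}{\alpha-1}$, the left side is a fixed positive constant, and the right side tends to $0$. So \eqref{tiaojian2'} holds once $t_0$ is large. The same argument gives
\[
\theta t\alpha(t)-\theta-1\geq D_2:=\tfrac12\left(\theta(\alpha-1)-1\right)>0,
\]
which is the hypothesis of Theorem \ref{theorem2}(1). For \eqref{tiaojian3'}, a direct computation gives $(2\theta-1)\beta(t)+\theta t\dot\beta(t)=((\beta+2)\theta-1)t^\beta<0$, since $\theta<\frac{1}{\beta+2}$.

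The key step is \eqref{tiaojian4}. First,
\[
\alpha(t)+t\dot\alpha(t)=\frac{d}{dt}\bigl(t\alpha(t)\bigr)=\frac{\sigma}{t(\log t)^{\sigma+1}}>0,
\qquad
t\beta(t)\epsilon(t)=\frac{1}{t(\log t)^r}.
\]
Hence \eqref{tiaojian4} is equivalent to $\sigma\leq(\log t)^{\sigma+1-r}$. This holds for large $t$ precisely because $r<\sigma+1$, and this is where that hypothesis enters. I expect this comparison of logarithmic orders to be the only delicate point. The main bookkeeping is to choose a single $t_0$ that makes all three inequalities, and the bound on $D_2$, valid simultaneously.

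\textbf{Step 2 (integrability and growth).} We have $t\beta(t)\epsilon(t)=\frac{1}{t(\log t)^r}$, which is integrable on $[t_0,+\infty)$ since $r>1$. It follows that $\frac{\beta(t)\epsilon(t)}{t}\leq t\beta(t)\epsilon(t)$ is integrable as well. Moreover $t^2\beta(t)=t^{\beta+2}\to+\infty$ and $\liminf_{t\to+\infty}\beta(t)\geq 1>0$, with $\beta(t)\to+\infty$ when $\beta>0$. Also, $\epsilon$ is positive, $\mathcal C^1$, nonincreasing for large $t$ (as a product of decreasing positive factors), and tends to $0$, so Assumption \ref{assumption} holds.

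\textbf{Step 3 (reading off conclusions).} Theorem \ref{theorem1} gives boundedness of the trajectory together with the rates
\[
\mathcal O\left(\frac{1}{t^2\beta(t)}\right)=\mathcal O\left(t^{-\beta-2}\right)\ \text{for the gap},\qquad
\mathcal O\left(\frac{1}{t}\right)\ \text{for }\|\dot x\|,\|\dot y\|,\qquad
\mathcal O\left(t^{-\frac{\beta+2}{2}}\right)\ \text{for the gradient differences}.
\]
For the integral estimates, multiply $t\left(t\alpha(t)-\frac1\theta-1\right)$ by $\theta$; this gives exactly $(\alpha\theta-\theta-1)t-\frac{\theta t}{(\log t)^\sigma}$. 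The weight $\left((1-2\theta)\beta(t)-\theta t\dot\beta(t)\right)t$ equals $\left(1-(\beta+2)\theta\right)t^{\beta+1}$, a positive constant multiple of $t^{\beta+1}$. Finally, Theorem \ref{theorem2} with the constant $D_2$ from Step 1 yields $\dot x,\dot y\to0$ and gap $\to0$. When $\beta>0$ it also yields the $o(t^{-\beta})$ and $o(t^{-\beta/2})$ rates.
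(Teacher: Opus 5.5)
Your proposal is correct and follows the paper's proof essentially step for step. You verify the compatibility conditions for large $t_0$, using $r<\sigma+1$ to compare $\frac{\sigma}{t(\log t)^{\sigma+1}}$ with $\frac{1}{t(\log t)^r}$, check both integrability conditions via $r>1$, and then read off Theorems \ref{theorem1} and \ref{theorem2}. The only cosmetic difference is that you verify the non-strict condition \eqref{tiaojian4}, which is all those theorems require, whereas the paper establishes the strict version \eqref{tiaojian4'} from the ratio $\sigma(\log t)^{r-\sigma-1}\to0$.
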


\begin{proof}
	A direct calculation gives
	\[
	\alpha(t)+t\dot{\alpha}(t)=\frac{\sigma}{t(\log t)^{\sigma+1}}>0,
	\qquad
	t\beta(t)\epsilon(t)=\frac{1}{t(\log t)^r},
	\]
	and hence
	\[
	\frac{\alpha(t)+t\dot{\alpha}(t)}{t\beta(t)\epsilon(t)}=\sigma(\log t)^{r-\sigma-1}\longrightarrow 0.
	\]
	Moreover,
	\[
	1+\theta-\theta t\alpha(t)\longrightarrow1+\theta-\alpha\theta<0,
	\qquad
	(2\theta-1)\beta(t)+\theta t\dot{\beta}(t)=\left((\beta+2)\theta-1\right)t^\beta<0.
	\]
	Therefore, since $r<\sigma+1$ and $\frac{1}{\alpha-1}<\theta<\frac{1}{\beta+2}$, conditions \eqref{tiaojian2'}, \eqref{tiaojian3'}, and \eqref{tiaojian4'} hold for all $t\ge t_0$, provided that $t_0$ is sufficiently large.
	
	Furthermore, since $r>1$,
	\[
	\int_{t_0}^{+\infty}t\beta(t)\epsilon(t)\,\mathrm{d}t
	=
	\int_{t_0}^{+\infty}\frac{\mathrm{d}t}{t(\log t)^r}
	<+\infty,
	\qquad
	\int_{t_0}^{+\infty}\frac{\beta(t)\epsilon(t)}{t}\,\mathrm{d}t
	=
	\int_{t_0}^{+\infty}\frac{\mathrm{d}t}{t^3(\log t)^r}
	<+\infty.
	\]
	Also,
	\[
	t^2\beta(t)=t^{\beta+2}\longrightarrow+\infty,
	\qquad
	\liminf_{t\to+\infty}\beta(t)>0,
	\]
	and $\beta(t)=t^\beta\to+\infty$ whenever $\beta>0$.
	Finally,
	\[
	\theta t\alpha(t)-\theta-1
	\longrightarrow
	\theta(\alpha-1)-1>0,
	\]
	so that, for $t_0$ sufficiently large, the additional condition in
	Theorem \ref{theorem2} is satisfied for some $D_2>0$.
	Consequently, the conclusions follow directly from Theorems
	\ref{theorem1} and \ref{theorem2}.
\end{proof}

\section{Strong Convergence of Trajectories}\label{sec6}
In this section, we study the strong convergence of the trajectories generated by \eqref{eq15} toward the minimum-norm saddle point of problem \eqref{eq13}. Since the Tikhonov-regularized saddle point $(x_t,y_t)$ varies with time, we formulate the analysis in terms of the deviation between the generated trajectory and the time-dependent regularized saddle path, and construct a Lyapunov functional centered at $(x_t,y_t)$. By controlling this deviation and using the convergence property $(x_t,y_t)\rightarrow(\hat{x}^{*},\hat{y}^{*})$, we establish strong convergence to the minimum-norm saddle point.

\begin{theorem}\label{theorem3}
	Let $(x_t,y_t)$ denote the unique saddle point of $\mathcal{L}_t$, let $(x(t),y(t))$ be a global solution of the dynamical system \eqref{eq15}, and let $\left(\hat{x}^*,\hat{y}^*\right)=\operatorname{Proj}_{\Omega}0$ be the unique minimum-norm saddle point of $\Omega$. Suppose that $t_0 \geq 1$, that Assumption \ref{assumption} and conditions \eqref{tiaojian2'}, \eqref{tiaojian3}, and \eqref{tiaojian4'} hold, and that $\lim\limits_{t\to+\infty}t^2\beta(t)\epsilon(t)=+\infty$. Assume further that
	\begin{equation}\tag{$\mathcal{N}_1$}\label{N2}
		\lim\limits_{t \to +\infty} \frac{\int_{t_0}^{t}\tau^2\mathcal{R}(\tau)\mathrm{d}\tau}{t^2\beta(t)\epsilon(t)} = 0,
	\end{equation}
	where 
	\begin{align*}
		\mathcal{R}(t) = \frac{\|P_x(t)\|^2 + \|P_y(t)\|^2}{l(t)} + \frac{\|\dot{x}_t\|^2 + \|\dot{y}_t\|^2}{2\theta^2t^2\Gamma(t)},
		\\
		P_x(t) = -\beta(t)K^*\dot{y}_t - \beta(t)\dot{\epsilon}(t)x_t + \frac{\alpha(t)-t^{-1}}{\theta t}\dot{x}_t,
		\\
		P_y(t) = \beta(t)K\dot{x}_t - \beta(t)\dot{\epsilon}(t)y_t + \frac{\alpha(t)-t^{-1}}{\theta t}\dot{y}_t,
		\\
		l(t)=\frac{1}{\theta t}\left(\beta(t)\epsilon(t)-\dot{\alpha}(t)-\frac{\alpha(t)}{t}\right),
		\\
		\Gamma(t)=\alpha(t)-\frac{1+\theta}{\theta t}.
	\end{align*}
	By \eqref{tiaojian2'} and \eqref{tiaojian4'}, we have $\Gamma(t) > 0$ and $l(t) > 0$ for all $t \geq t_0$. Then, the trajectory converges strongly to the minimum-norm saddle point $(\hat{x}^*,\hat{y}^*)$, that is,
	\begin{equation*}
	\lim_{t\to+\infty} \left\|(x(t),y(t))-(\hat{x}^*,\hat{y}^*)\right\| = 0.
	\end{equation*}
	Note that the quantities $P_x(t)$, $P_y(t)$, and $\mathcal{R}(t)$ are defined almost everywhere. Moreover, by Lemma \ref{cunzai} and the continuity and positivity of $l(t)$ and $\Gamma(t)$, the function $\mathcal{R}$ is measurable and belongs to $L^1_{\mathrm{loc}}([t_0,+\infty))$.
\end{theorem}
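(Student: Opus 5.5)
We will run a moving-saddle Lyapunov argument built on the first-order energy $\bar{\mathcal{E}}$ of Section \ref{subsec2}, but centered at the regularized saddle point $(x_t,y_t)$ instead of a fixed $(x^*,y^*)$. Concretely, we define
\[
\mathcal{E}^\epsilon(t)=\beta(t)\left(\mathcal{L}_t(x(t),y_t)-\mathcal{L}_t(x_t,y(t))\right)+\tfrac12\left\|\tfrac{1}{\theta t}(x(t)-x_t)+\dot x(t)\right\|^2+\tfrac{\bar m(t)}{2}\|x(t)-x_t\|^2
\]
plus the analogous $y$-terms. Since $(x_t,y_t)$ is a saddle point of $\mathcal{L}_t$, strong convexity--concavity gives
\[
\mathcal{L}_t(x,y_t)-\mathcal{L}_t(x_t,y)\geq\tfrac{\epsilon(t)}{2}\left(\|x-x_t\|^2+\|y-y_t\|^2\right).
\]
Hence $\mathcal{E}^\epsilon(t)\geq \frac{\beta(t)\epsilon(t)}{2}\|(x(t),y(t))-(x_t,y_t)\|^2$. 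Together with Lemma \ref{strongshoulian}, it is therefore enough to prove $\mathcal{E}^\epsilon(t)=o(\beta(t)\epsilon(t))$.

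The first step is to differentiate $\mathcal{E}^\epsilon$ almost everywhere, using Lemma \ref{cunzai}. We redo the computation of Lemma \ref{lemma2} with $(x^*,y^*)$ replaced by $(x_t,y_t)$. We use the optimality system $\nabla f(x_t)+K^*y_t+\epsilon x_t=0$, $\nabla g(y_t)-Kx_t+\epsilon y_t=0$, so that $\nabla_x\mathcal{L}_t(x,y+\theta t\dot y)=\nabla_x\mathcal{L}_t(x,y_t)+K^*(y-y_t+\theta t\dot y)$. The bilinear $K$-cross terms should again cancel. The $\epsilon$-strong monotonicity now produces a damping term $-\frac{\beta\epsilon}{\theta t}\|x-x_t\|^2$, which replaces the bad constant $\frac{\beta\epsilon}{2\theta t}\|x^*\|^2$ of Lemma \ref{lemma2}.

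Two kinds of new terms appear from the motion of the center. The first comes from $\partial_t$ of $\mathcal{L}_t$, i.e.\ $\dot\epsilon$ times norms. The second comes from $-\dot x_t$ and $-\dot y_t$ entering the derivatives of $x(t)-x_t$ and of $\mathcal{L}_t(\cdot,y_t)$, $\mathcal{L}_t(x_t,\cdot)$. We collect these as inner products of the form $\langle P_x(t),x(t)-x_t\rangle$ and $\langle P_y(t),y(t)-y_t\rangle$. Here $P_x$ gathers $-\beta K^*\dot y_t-\beta\dot\epsilon x_t$ and the term $\frac{\alpha-t^{-1}}{\theta t}\dot x_t$ coming from $\bar m$ and $\bar b$. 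The remaining center-motion term is of the form $\langle \dot x_t,\dot x\rangle/(\theta t)$. Using \eqref{tiaojian2'}, \eqref{tiaojian3} and \eqref{tiaojian4'} as in Lemma \ref{lemma2}, the target is an inequality
\[
\tfrac{2}{t}\mathcal{E}^\epsilon+\dot{\mathcal{E}}^\epsilon\leq -l(t)\|(x-x_t,y-y_t)\|^2-\Gamma(t)\|(\dot x,\dot y)\|^2+\langle P,(x-x_t,y-y_t)\rangle+\tfrac{1}{\theta t}\langle(\dot x_t,\dot y_t),(\dot x,\dot y)\rangle .
\]
Young's inequality then absorbs the cross terms: $\langle P,\cdot\rangle\leq l\|\cdot\|^2+\|P\|^2/(4l)$, and the velocity term is bounded by $\Gamma\|\dot z\|^2+\|\dot z_t\|^2/(4\theta^2t^2\Gamma)$. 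This gives $\frac{2}{t}\mathcal{E}^\epsilon+\dot{\mathcal{E}}^\epsilon\leq \mathcal{R}(t)$, up to constants matching the definition of $\mathcal{R}$.

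Multiplying by $t^2$ and integrating yields $t^2\mathcal{E}^\epsilon(t)\leq t_0^2\mathcal{E}^\epsilon(t_0)+\int_{t_0}^t\tau^2\mathcal{R}$. Dividing by $t^2\beta\epsilon$ and using $t^2\beta\epsilon\to\infty$ together with \eqref{N2}, we get $\|(x(t),y(t))-(x_t,y_t)\|^2\leq 2\mathcal{E}^\epsilon/(\beta\epsilon)\to0$. Lemma \ref{strongshoulian} then concludes the proof.

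The main obstacle is the exact bookkeeping in the derivative. One has to check that the $K$-coupling with the moving center cancels or reduces to the $\beta K^*\dot y_t$, $\beta K\dot x_t$ terms, and that the $\dot\epsilon$ contribution from $\partial_t\mathcal{L}_t$ is either nonpositive or captured by $-\beta\dot\epsilon\, x_t$ in $P_x$. We expect the sign identity $\partial_t[\mathcal{L}_t(x,y_t)-\mathcal{L}_t(x_t,y)]$ at fixed arguments to equal $\frac{\dot\epsilon}{2}(\|x\|^2-\|x_t\|^2+\|y\|^2-\|y_t\|^2)$, rewritten via $\|x\|^2-\|x_t\|^2=\|x-x_t\|^2+2\langle x_t,x-x_t\rangle$ so that the first piece is $\leq0$. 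The $\dot\beta$ term is handled by \eqref{tiaojian3} exactly as in Theorem \ref{theorem2}.
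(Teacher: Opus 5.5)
Your proposal is correct and follows essentially the same route as the paper: the same energy centered at $(x_t,y_t)$, the inequality $\frac{2}{t}\mathcal{E}^\epsilon+\dot{\mathcal{E}}^\epsilon\le\mathcal{R}(t)$ obtained by Young's inequality on the $P$-terms and on $\frac{1}{\theta t}\bigl(\langle\dot x,\dot x_t\rangle+\langle\dot y,\dot y_t\rangle\bigr)$, integration against $t^2$, and the lower bound $\mathcal{E}^\epsilon\ge\frac{\beta\epsilon}{2}\|(x,y)-(x_t,y_t)\|^2$ combined with \eqref{N2} and Lemma \ref{strongshoulian}. One bookkeeping correction: strong convexity must be used in function-value form so that it also produces the term $-\frac{\beta}{\theta t}\bigl(\mathcal{L}_t(x,y_t)-\mathcal{L}_t(x_t,y)\bigr)$ needed for \eqref{tiaojian3}; hence the damping is $-\frac{\beta\epsilon}{2\theta t}\|x-x_t\|^2$ and the net coefficient is $-\frac{l(t)}{2}$ rather than $-l(t)$, which Young's inequality in the form $\frac{l}{4}\|x-x_t\|^2+\frac{\|P_x\|^2}{l}$ still absorbs (and \eqref{N2} is insensitive to constant factors anyway).
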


\begin{proof}
    Define $\widehat{\mathcal{E}} \colon [t_0,+\infty)\to[0,+\infty)$ by
	\begin{equation}\label{eqnew}
		\begin{aligned}
			\widehat{\mathcal{E}}(t):=
			&\beta(t)\left( \mathcal{L}_t\left(x(t),y_t\right)-\mathcal{L}_t\left(x_t,y(t)\right) \right) \\
			&+\frac{1}{2}\left\| \frac{1}{\theta t} \left(x(t)-x_t\right)+\dot{x}(t) \right\|^2 +\frac{1}{2}\left\| \frac{1}{\theta t} \left(y(t)-y_t\right)+\dot{y}(t) \right\|^2 \\
			&+\frac{\theta t\alpha(t)-\theta-1}{2\theta^2t^2} \left(\|x(t)-x_t\|^2 + \|y(t)-y_t\|^2\right).
		\end{aligned}
	\end{equation}
	By Theorem \ref{theorem1A1}, Lemma \ref{cunzai}, and Assumption \ref{assumption}, we have $(x,y)\in W^{2,1}_{\mathrm{loc}}$ and $(x_t,y_t)\in W^{1,\infty}_{\mathrm{loc}}$. Hence, $\widehat{\mathcal{E}}\in W^{1,1}_{\mathrm{loc}}([t_0,+\infty))$ and is therefore locally absolutely continuous. All differential identities and inequalities below are understood to hold for almost every $t\ge t_0$.
	
	Observe that
	\begin{equation*}
		\frac{1}{2}\left\| \frac{1}{\theta t}\left(x(t)-x_t\right)+\dot{x}(t) \right\|^2 = \frac{1}{2\theta^2t^2}\|x(t)-x_t\|^2 + \frac{1}{\theta t}\langle x(t)-x_t,\dot{x}(t) \rangle + \frac{1}{2}\|\dot{x}(t)\|^2
	\end{equation*}
	and
	\begin{equation*}
		\frac{1}{2}\left\| \frac{1}{\theta t}\left(y(t)-y_t\right)+\dot{y}(t) \right\|^2 = \frac{1}{2\theta^2t^2}\|y(t)-y_t\|^2 + \frac{1}{\theta t}\langle y(t)-y_t,\dot{y}(t) \rangle + \frac{1}{2}\|\dot{y}(t)\|^2.
	\end{equation*}
	Thus,
		\begin{equation}\label{eqnew'}
		\begin{aligned}
			\widehat{\mathcal{E}}(t) =
			&\beta(t)\left( \mathcal{L}_t\left(x(t),y_t\right)-\mathcal{L}_t\left(x_t,y(t)\right) \right) \\
			&+\frac{t\alpha(t)-1}{2\theta t^2} \left(\|x(t)-x_t\|^2 + \|y(t)-y_t\|^2\right) + \frac{1}{2} \left( \|\dot{x}(t)\|^2 + \|\dot{y}(t)\|^2 \right) \\
			&+\frac{1}{\theta t} \left( \langle x(t)-x_t,\dot{x}(t) \rangle  + \langle y(t)-y_t,\dot{y}(t) \rangle \right).
		\end{aligned}
	\end{equation}
	By the $\epsilon(t)$-strong convexity of $\mathcal{L}_t\left(\cdot,y_t\right)$ and the $\epsilon(t)$-strong concavity of $\mathcal{L}_t\left(x_t,\cdot\right)$, we have
	\begin{align*}
		\langle \nabla_x\mathcal{L}_t\left(x(t),y_t\right),x_t-x(t) \rangle \leq \mathcal{L}_t\left(x_t,y_t\right)-\mathcal{L}_t\left(x(t),y_t\right)-\frac{\epsilon(t)}{2}\|x(t)-x_t\|^2,
		\\
		\langle \nabla_y\mathcal{L}_t\left(x_t,y(t)\right),y_t-y(t) \rangle \geq \mathcal{L}_t\left(x_t,y_t\right)-\mathcal{L}_t\left(x_t,y(t)\right)+\frac{\epsilon(t)}{2}\|y(t)-y_t\|^2.
	\end{align*}
	Differentiating $\widehat{\mathcal{E}}(t)$ and using the dynamical system \eqref{eq15}, the saddle-point optimality conditions for $(x_t,y_t)$, the above inequalities, and $\dot{\epsilon}(t) \leq 0$, we obtain
	\begin{equation}\label{strong'}
		\begin{aligned}
			\dot{\widehat{\mathcal{E}}}(t)\leq & \left(\dot{\beta}(t)-\frac{\beta(t)}{\theta t}\right) \left(\mathcal{L}_t\left(x(t),y_t\right) - \mathcal{L}_t\left(x_t,y(t)\right)\right) \\
			&-\frac{1}{2}\left(\frac{\beta(t)\epsilon(t)}{\theta t}-\frac{\dot{\alpha}(t)}{\theta t}+\frac{\alpha(t)}{\theta t^2}-\frac{2}{\theta t^3}\right)\left(\|x(t)-x_t\|^2+\|y(t)-y_t\|^2\right) \\
			&+ \left(\frac{1}{\theta t}-\alpha(t)\right)\left(\|\dot{x}(t)\|^2+\|\dot{y}(t)\|^2\right) - \frac{2}{\theta t^2}\left( \langle x(t)-x_t,\dot{x}(t) \rangle  + \langle y(t)-y_t,\dot{y}(t) \rangle \right) \\
			&-\langle x(t)-x_t,P_x(t) \rangle - \langle y(t)-y_t,P_y(t) \rangle - \frac{1}{\theta t}\left(\langle \dot{x}(t),\dot{x}_t \rangle + \langle \dot{y}(t),\dot{y}_t \rangle\right).
		\end{aligned}
	\end{equation}
	Equations \eqref{eqnew'} and \eqref{strong'} together imply that
	\begin{equation}\label{strong}
	\begin{aligned}
		\frac{2}{t}\widehat{\mathcal{E}}(t)+\dot{\widehat{\mathcal{E}}}(t)\leq & \left(\dot{\beta}(t)+\left(\frac{2}{t}-\frac{1}{\theta t}\right)\beta(t)\right) \left(\mathcal{L}_t\left(x(t),y_t\right) - \mathcal{L}_t\left(x_t,y(t)\right)\right) \\
		&-\frac{l(t)}{2}\left(\|x(t)-x_t\|^2+\|y(t)-y_t\|^2\right) - \Gamma(t)\left(\|\dot{x}(t)\|^2+\|\dot{y}(t)\|^2\right) \\
		&-\langle x(t)-x_t,P_x(t) \rangle - \langle y(t)-y_t,P_y(t) \rangle - \frac{1}{\theta t}\left(\langle \dot{x}(t),\dot{x}_t \rangle + \langle \dot{y}(t),\dot{y}_t \rangle\right).
	\end{aligned}
	\end{equation}
	Young's inequality, together with \eqref{tiaojian2'} and \eqref{tiaojian4'}, gives
	\begin{align}
		-\langle x(t)-x_t,P_x(t) \rangle \leq \frac{l(t)}{4}\|x(t)-x_t\|^2 + \frac{\|P_x(t)\|^2}{l(t)},
		\label{Young1}
		\\[5pt]
	    -\langle y(t)-y_t,P_y(t) \rangle \leq \frac{l(t)}{4}\|y(t)-y_t\|^2 + \frac{\|P_y(t)\|^2}{l(t)},
		\label{Young2}
		\\[5pt]
		-\frac{1}{\theta t}\left(\langle \dot{x}(t),\dot{x}_t \rangle + \langle \dot{y}(t),\dot{y}_t \rangle\right) \leq \frac{\Gamma(t)}{2}\left(\|\dot{x}(t)\|^2+\|\dot{y}(t)\|^2\right) + \frac{\|\dot{x}_t\|^2 + \|\dot{y}_t\|^2}{2\theta^2t^2\Gamma(t)}.
		\label{Young3}
	\end{align}
	Substituting \eqref{Young1}, \eqref{Young2}, and \eqref{Young3} into \eqref{strong} gives
	\begin{equation*}
		\begin{aligned}
			\frac{2}{t}\widehat{\mathcal{E}}(t)+\dot{\widehat{\mathcal{E}}}(t)\leq & \left(\dot{\beta}(t)+\left(\frac{2}{t}-\frac{1}{\theta t}\right)\beta(t)\right) \left(\mathcal{L}_t\left(x(t),y_t\right) - \mathcal{L}_t\left(x_t,y(t)\right)\right) \\
			&-\frac{l(t)}{4}\left(\|x(t)-x_t\|^2+\|y(t)-y_t\|^2\right) \\
			&-\frac{\Gamma(t)}{2}\left(\|\dot{x}(t)\|^2+\|\dot{y}(t)\|^2\right) \\
			&+\mathcal{R}(t).
		\end{aligned}
	\end{equation*}
	In view of \eqref{eq19'}, \eqref{tiaojian2'}, \eqref{tiaojian3}, and \eqref{tiaojian4'}, this reduces to
	\begin{equation*}
			\frac{2}{t}\widehat{\mathcal{E}}(t)+\dot{\widehat{\mathcal{E}}}(t) \leq \mathcal{R}(t).
	\end{equation*}
	Multiplying the above inequality by $t^2$ and integrating over $[t_0,t]$ gives
	\begin{equation}\label{lyapunov}
		\widehat{\mathcal{E}}(t) \leq \frac{t_0^2\widehat{\mathcal{E}}(t_0)}{t^2} + \frac{1}{t^2} \int_{t_0}^{t}\tau^2\mathcal{R}(\tau)\mathrm{d}\tau.
	\end{equation}
	Moreover, by \eqref{eq21}, we have
	\begin{equation*}
		\begin{aligned}
			\mathcal{L}_t\left(x(t),y_t\right) - \mathcal{L}_t\left(x_t,y(t)\right) =& f(x(t))-f(x_t)-\langle \nabla f(x_t), x(t)-x_t \rangle \\
			&+g(y(t))-g(y_t)-\langle \nabla g(y_t),y(t)-y_t \rangle \\
			&+\frac{\epsilon(t)}{2}\left(\|x(t)-x_t\|^2 + \|y(t)-y_t\|^2\right).
		\end{aligned}
	\end{equation*}
	By the convexity of $f$ and $g$,
	\begin{align*}
		f(x(t))-f(x_t)-\langle \nabla f(x_t), x(t)-x_t \rangle \geq 0,
		\\[5pt]
		g(y(t))-g(y_t)-\langle \nabla g(y_t),y(t)-y_t \rangle \geq 0.
	\end{align*}
	Consequently,
	\begin{equation}\label{lyapunov'}
		\widehat{\mathcal{E}}(t) \geq \frac{\beta(t)\epsilon(t)}{2}\left(\|x(t)-x_t\|^2+\|y(t)-y_t\|^2\right).
	\end{equation}
	Combining \eqref{lyapunov} with \eqref{lyapunov'} yields
	\begin{equation}\label{ready}
		\|x(t)-x_t\|^2 + \|y(t)-y_t\|^2 \leq \frac{2t_0^2\widehat{\mathcal{E}}(t_0)}{t^2\beta(t)\epsilon(t)} + \frac{2\int_{t_0}^{t}\tau^2\mathcal{R}(\tau)\mathrm{d}\tau}{t^2\beta(t)\epsilon(t)}.
	\end{equation}
	Since $\lim\limits_{t\to+\infty}t^2\beta(t)\epsilon(t)=+\infty$, condition \eqref{N2} shows that the distance between $\left(x(t),y(t)\right)$ and $\left(x_t,y_t\right)$ tends to zero, namely,
	\begin{equation}\label{jixian1}
		\lim\limits_{t\to+\infty} \|\left(x(t),y(t)\right)-\left(x_t,y_t\right)\| = 0.
	\end{equation}
	The triangle inequality further gives
	\begin{equation*}
		\|\left(x(t),y(t)\right)-\left(\hat{x}^*,\hat{y}^*\right)\| \leq \|\left(x(t),y(t)\right)-\left(x_t,y_t\right)\| + \|\left(x_t,y_t\right)-\left(\hat{x}^*,\hat{y}^*\right)\|.
	\end{equation*}
	Lemma \ref{strongshoulian} and \eqref{jixian1}, together with the triangle inequality, yield
	\begin{equation*}
		\lim\limits_{t\to+\infty} \|\left(x(t),y(t)\right)-\left(\hat{x}^*,\hat{y}^*\right)\| = 0.
	\end{equation*}
	
	This completes the proof of Theorem \ref{theorem3}.
\end{proof}

Condition \eqref{N2} is formulated at the level of the time-dependent regularized saddle path rather than directly on the original trajectory. It measures the accumulated effect of the motion of the regularized saddle point relative to the tracking gain generated by the Tikhonov term. Therefore, \eqref{N2} provides a sufficient condition for controlling the deviation between the generated trajectory and the time-dependent regularized saddle path. Compared with trajectory localization conditions, this formulation emphasizes the role of the regularized saddle path in the strong convergence mechanism.

\begin{proposition}\label{prop:N1prime}
	Suppose that $t_0 \geq 1$, that Assumption \ref{assumption} and conditions \eqref{tiaojian2'}, \eqref{tiaojian3}, and \eqref{tiaojian4'} hold, and that $\lim\limits_{t\to+\infty}t^2\beta(t)\epsilon(t)=+\infty$. Define
	\begin{equation*}
	\Psi(t):=\left(\frac{|\dot{\epsilon}(t)|}{\epsilon(t)}\right)^2\left(\frac{H^2(t)}{l(t)}+\frac{1}{t^2\Gamma(t)}\right),
	\end{equation*}
	where $H(t):=\beta(t)+\beta(t)\epsilon(t)+\alpha(t)-\frac{1}{\theta t}$.
	If
	\begin{equation}\label{N2'}
	\lim_{t\to+\infty}
	\frac{
		\displaystyle\int_{t_0}^{t}\tau^2\Psi(\tau)\,\mathrm{d}\tau
	}{
		t^2\beta(t)\epsilon(t)
	}
	=0,
	\tag{$\mathcal{N}_2$}
	\end{equation}
	then condition \eqref{N2} is satisfied. Consequently, the trajectory generated by the dynamical system \eqref{eq15} converges strongly to the minimum-norm saddle point $(\hat{x}^*,\hat{y}^*)$.
\end{proposition}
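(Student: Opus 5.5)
The plan is to bound $\mathcal{R}(t)$ pointwise (almost everywhere) by a constant multiple of $\Psi(t)$. Condition \eqref{N2'} then transfers directly to \eqref{N2}, and the strong convergence follows from Theorem \ref{theorem3}. The only input is Lemma \ref{cunzai}, which gives
\[
\|(\dot{x}_t,\dot{y}_t)\|\le \frac{|\dot{\epsilon}(t)|}{\epsilon(t)}\|(x_t,y_t)\|\le \frac{|\dot{\epsilon}(t)|}{\epsilon(t)}\|(\hat{x}^*,\hat{y}^*)\|.
\]

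\emph{Step 1: the $P_x,P_y$ part of $\mathcal{R}$.} By the triangle inequality,
\[
\|P_x(t)\|\le \beta(t)\|K\|\,\|\dot{y}_t\|+\beta(t)|\dot{\epsilon}(t)|\,\|x_t\|+\frac{|\alpha(t)-t^{-1}|}{\theta t}\|\dot{x}_t\|,
\]
and $P_y$ is bounded in the same way. Write $M:=\|(\hat{x}^*,\hat{y}^*)\|$. Lemma \ref{strongshoulian} gives $\|x_t\|\le M$, so the middle term satisfies
\[
\beta(t)|\dot{\epsilon}(t)|\,\|x_t\|=\beta(t)\epsilon(t)\frac{|\dot{\epsilon}(t)|}{\epsilon(t)}\|x_t\|\le \beta(t)\epsilon(t)\frac{|\dot{\epsilon}(t)|}{\epsilon(t)}M.
\]
The first and third terms are bounded via Lemma \ref{cunzai} by $\beta(t)\|K\|\frac{|\dot\epsilon|}{\epsilon}M$ and $\frac{|\alpha-t^{-1}|}{\theta t}\frac{|\dot\epsilon|}{\epsilon}M$. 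Using $t\ge t_0\ge1$ and \eqref{tiaojian2'}, which gives $\alpha(t)>\frac{1+\theta}{\theta t}>\frac1t$, the coefficient $\frac{\alpha(t)-t^{-1}}{\theta t}$ is positive. Comparing it with $\alpha(t)-\frac{1}{\theta t}$ is the one place where the precise form of $H$ matters. We have $\frac{\alpha-t^{-1}}{\theta t}\le \frac{\alpha}{\theta}$ for $t\ge1$, which is at most a constant times $\alpha-\frac{1}{\theta t}$, since $\alpha-\frac1{\theta t}>\frac{1}{t}>0$ and $\alpha-\frac1{\theta t}\ge \frac{\alpha}{1+\theta}$ by \eqref{tiaojian2'}. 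Putting the three terms together,
\[
\|P_x\|^2+\|P_y\|^2\le C\left(\frac{|\dot\epsilon|}{\epsilon}\right)^2 H(t)^2,
\]
with $C$ depending only on $\|K\|,\theta,M$.

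\emph{Step 2: the remaining term of $\mathcal{R}$.} Directly from Lemma \ref{cunzai},
\[
\frac{\|\dot{x}_t\|^2+\|\dot{y}_t\|^2}{2\theta^2t^2\Gamma(t)}\le \frac{M^2}{2\theta^2}\left(\frac{|\dot\epsilon|}{\epsilon}\right)^2\frac{1}{t^2\Gamma(t)}.
\]

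\emph{Step 3: conclusion.} Steps 1 and 2 give $\mathcal{R}(t)\le C'\Psi(t)$ almost everywhere. Hence
\[
\int_{t_0}^t\tau^2\mathcal{R}\le C'\int_{t_0}^t\tau^2\Psi,
\]
so dividing by $t^2\beta(t)\epsilon(t)$ and invoking \eqref{N2'} yields \eqref{N2}. Theorem \ref{theorem3} then applies, since its remaining hypotheses are assumed verbatim. If $M=0$, then $\mathcal{R}\equiv0$ and \eqref{N2} holds trivially.

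The main obstacle is the constant bookkeeping in Step 1, namely comparing $\frac{\alpha-t^{-1}}{\theta t}$ with the $\alpha-\frac{1}{\theta t}$ part of $H$. If the comparison above turns out to be delicate, I would fall back on the cruder bound $\frac{\alpha-t^{-1}}{\theta t}\le \frac{1}{\theta}\left(\alpha-\frac1{\theta t}\right)+\frac{1}{\theta^2 t^2}$. The extra $t^{-2}$ piece would be absorbed using $\alpha-\frac1{\theta t}>\frac1t\ge\frac1{t^2}$ for $t\ge1$, which again gives domination by a multiple of $H$.
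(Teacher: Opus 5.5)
Your proposal is correct and follows the paper's proof: you bound $\|P_x\|^2+\|P_y\|^2$ and the $\|\dot{x}_t\|^2+\|\dot{y}_t\|^2$ term of $\mathcal{R}$ by a constant times $\Psi$ via Lemma~\ref{strongshoulian} and Lemma~\ref{cunzai}, then integrate and invoke Theorem~\ref{theorem3}, and you are more explicit than the paper about comparing $\frac{\alpha(t)-t^{-1}}{\theta t}$ with $\alpha(t)-\frac{1}{\theta t}$. One harmless slip: \eqref{tiaojian2'} gives $\alpha(t)-\frac{1}{\theta t}>\frac{\theta\,\alpha(t)}{1+\theta}$, not $\alpha(t)-\frac{1}{\theta t}\geq\frac{\alpha(t)}{1+\theta}$ (the latter fails for $\theta<1$), so the constant is $\frac{1+\theta}{\theta^2}$, which is exactly what your fallback bound produces, and the argument is unaffected.
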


\begin{proof}
	By Lemmas \ref{strongshoulian} and \ref{cunzai}, for almost every
	$t\geq t_0$,
	\[
	\|x_t\|^2+\|y_t\|^2 \leq \|(\hat{x}^*,\hat{y}^*)\|^2,
	\qquad
	\|\dot{x}_t\|^2+\|\dot{y}_t\|^2 \leq \left( \frac{|\dot{\epsilon}(t)|}{\epsilon(t)} \right)^2 \|(\hat{x}^*,\hat{y}^*)\|^2.
	\]
	Using the definitions of $P_x(t)$ and $P_y(t)$, together with the Cauchy--Schwarz inequality, we obtain
	\[
		\|P_x(t)\|^2+\|P_y(t)\|^2 \leq  D_3\|(\hat{x}^*,\hat{y}^*)\|^2 \left( \frac{|\dot{\epsilon}(t)|}{\epsilon(t)} \right)^2 
		\left( \beta^2(t)+\beta^2(t)\epsilon^2(t)+\left(\alpha(t)-\frac{1}{\theta t}\right)^2 \right),
	\]
	for some constant $D_3>0$ that depends only on $\theta$ and $\|K\|$ and is independent of $t$.
	
	Since \eqref{tiaojian2'} implies $\alpha(t)-\frac{1}{\theta t}>0$, the definition of $H(t)$ yields
	\[
	\beta^2(t)+\beta^2(t)\epsilon^2(t)
	+\left(
	\alpha(t)-\frac{1}{\theta t}
	\right)^2
	\leq H^2(t).
	\]
	Hence, by the definition of $\mathcal{R}(t)$,
	\[
	\mathcal{R}(t)\leq D_3\|(\hat{x}^*,\hat{y}^*)\|^2\left(\frac{|\dot{\epsilon}(t)|}{\epsilon(t)}\right)^2
	\left(\frac{H^2(t)}{l(t)}+\frac{1}{t^2\Gamma(t)}\right)=D_3\|(\hat{x}^*,\hat{y}^*)\|^2\Psi(t).
	\]
	Therefore,
	\[
	0\leq
	\frac{\displaystyle\int_{t_0}^{t}\tau^2\mathcal{R}(\tau)\,\mathrm{d}\tau}{t^2\beta(t)\epsilon(t)}
	\leq
	D_3\|(\hat{x}^*,\hat{y}^*)\|^2
	\frac{\displaystyle\int_{t_0}^{t}\tau^2\Psi(\tau)\,\mathrm{d}\tau}{t^2\beta(t)\epsilon(t)}.
	\]
	Thus, condition \eqref{N2'} implies \eqref{N2}, and the conclusion follows from Theorem \ref{theorem3}.
\end{proof}

\begin{remark}\label{remark:N1relation}
	Condition \eqref{N2} is a path-level condition involving the evolution of the regularized saddle path. It directly quantifies the influence of the moving regularization target on the tracking error. Condition \eqref{N2'} is a parameter-level sufficient condition obtained by estimating the quantities appearing in \eqref{N2}. Although \eqref{N2'} may be more restrictive than \eqref{N2}, it has the advantage of being verifiable directly from the prescribed coefficient functions and the extrapolation parameter $\theta$, without requiring an explicit expression for the regularized saddle path.
	
	To show that condition \eqref{N2'} is nonvacuous, consider
	\[
	\alpha(t)=\frac{\alpha+\delta\sin(\log t)}{t},
	\qquad
	\beta(t)=t^\beta,
	\qquad
	\epsilon(t)=\frac{c}{(\log t)^r},
	\]
	where $\beta\geq0$, $c>0$, $\delta>0$, $0<r<1$, and $\alpha-\delta>\beta+3$. Choose $\frac{1}{\alpha-\delta-1}<\theta\leq\frac{1}{\beta+2}$. Then, for $t_0>1$ sufficiently large, the above parameters satisfy conditions \eqref{tiaojian2'}, \eqref{tiaojian3}, \eqref{tiaojian4'}, and \eqref{N2'}, and $\lim\limits_{t\to+\infty}t^2\beta(t)\epsilon(t)=+\infty$. Therefore, Proposition \ref{prop:N1prime} and Theorem~\ref{theorem3} imply that every global solution $(x(t),y(t))$ of the dynamical system \eqref{eq15} converges strongly to the minimum-norm saddle point $(\hat{x}^{*},\hat{y}^{*})=\operatorname{Proj}_{\Omega}0$, namely,
	\[
	\lim_{t\to+\infty}\left\|(x(t),y(t))-(\hat{x}^{*},\hat{y}^{*})\right\|=0.
	\]
\end{remark}

We finally consider the case in which the minimum-norm saddle point $(\hat{x}^*,\hat{y}^*)$ is the origin. Then the regularized saddle path is identically zero and the moving-target forcing terms vanish. As a result, the path-motion condition \eqref{N2} is no longer needed, and the non-strict conditions \eqref{tiaojian2}--\eqref{tiaojian4} suffice for strong convergence.

    \begin{corollary}\label{corollary1}
    	Let $(x(t),y(t))$ be a global solution of the dynamical system \eqref{eq15}, and suppose that the minimum-norm saddle point $\left(\hat{x}^*,\hat{y}^*\right)$ is the origin. If Assumption \ref{assumption} and conditions \eqref{tiaojian2}--\eqref{tiaojian4} hold and if $\lim\limits_{t\to+\infty}t^2\beta(t)\epsilon(t)=+\infty$, then the trajectory converges strongly to the minimum-norm saddle point $(\hat{x}^*,\hat{y}^*)$, that is,
    	\begin{equation*}
    		\lim_{t\to+\infty} \left\|(x(t),y(t))-(\hat{x}^*,\hat{y}^*)\right\| = 0.
    	\end{equation*}
    \end{corollary}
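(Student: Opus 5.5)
When the minimum-norm saddle point is the origin, Lemma \ref{strongshoulian} gives $\|(x_t,y_t)\|\le\|(\hat{x}^*,\hat{y}^*)\|=0$ for every $t\geq t_0$. Hence the regularized saddle path is constant, $(x_t,y_t)\equiv(0,0)$, and $(\dot{x}_t,\dot{y}_t)\equiv0$. In particular $(0,0)\in\Omega$, so the proof of Theorem \ref{theorem3} can be rerun with the fixed center $(0,0)$. All forcing terms $P_x,P_y$ and $\dot{x}_t,\dot{y}_t$ then vanish, and no Young splitting is needed. This is why the strict conditions can be replaced by the non-strict ones.

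Concretely, I would take the Lyapunov function $\widehat{\mathcal{E}}$ of \eqref{eqnew} with $x_t=y_t=0$. With this choice it coincides with $\bar{\mathcal{E}}$ of \eqref{eq62} for $(x^*,y^*)=(0,0)$, because $\mathcal{L}_t(x,0)-\mathcal{L}_t(0,y)=\mathcal{L}(x,0)-\mathcal{L}(0,y)+\frac{\epsilon(t)}{2}(\|x\|^2+\|y\|^2)$. The simplest route is therefore to invoke Lemma \ref{lemma2} and the sign inequalities (a)--(d) from the proof of Theorem \ref{theorem2}, which use only \eqref{tiaojian2}--\eqref{tiaojian4}, with $(x^*,y^*)=(0,0)$. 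The constant forcing term $\frac{1}{2\theta t}\beta(t)\epsilon(t)(\|x^*\|^2+\|y^*\|^2)$ vanishes, so
\[
\frac{2}{t}\bar{\mathcal{E}}(t)+\dot{\bar{\mathcal{E}}}(t)\leq0 .
\]
Multiplying by $t^2$ and integrating gives $\bar{\mathcal{E}}(t)\leq t_0^2\bar{\mathcal{E}}(t_0)/t^2$. Note that no integrability assumption on $\epsilon$ is needed here.

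Next, I would bound $\bar{\mathcal{E}}$ from below. Since $\bar{\mathcal{E}}_1(t)\geq\frac{\beta(t)\epsilon(t)}{2}(\|x(t)\|^2+\|y(t)\|^2)$ by \eqref{eq19}, and $\bar{\mathcal{E}}_2,\bar{\mathcal{E}}_3\geq0$ under \eqref{tiaojian2}, we get
\[
\|x(t)\|^2+\|y(t)\|^2\leq\frac{2t_0^2\bar{\mathcal{E}}(t_0)}{t^2\beta(t)\epsilon(t)}\to0
\]
by the assumption $t^2\beta(t)\epsilon(t)\to+\infty$. This is exactly strong convergence to $(\hat{x}^*,\hat{y}^*)=(0,0)$.

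The only point I expect to need care is checking that the derivative estimate of Lemma \ref{lemma2} is valid at the particular saddle point $(0,0)$, and that its coefficient signs require only the non-strict conditions. Both hold, since Lemma \ref{lemma2} is stated for any $(x^*,y^*)\in\Omega$ and inequalities (a)--(d) were derived from \eqref{tiaojian2}--\eqref{tiaojian4} alone. I would also remark that $\bar{\mathcal{E}}(t_0)$ is finite, so the bound is meaningful.
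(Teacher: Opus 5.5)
Your proof is correct and follows the paper's argument. The paper also observes $x_t=y_t\equiv 0$ and reuses $\widehat{\mathcal{E}}$, which, as you note, coincides with $\bar{\mathcal{E}}$ centered at $(0,0)$. It obtains $\frac{2}{t}\widehat{\mathcal{E}}(t)+\dot{\widehat{\mathcal{E}}}(t)\le 0$ under the non-strict conditions and concludes via the lower bound $\frac{\beta(t)\epsilon(t)}{2}\left(\|x(t)\|^2+\|y(t)\|^2\right)$. The only difference is bookkeeping: the paper drops the vanishing forcing terms in \eqref{strong}, whereas you read the same inequality off Lemma \ref{lemma2} with $(x^*,y^*)=(0,0)$, which amounts to the same computation.
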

    
    \begin{proof}
    	Lemma \ref{strongshoulian} gives $\|\left(x_t,y_t\right)\| \leq \|\left(\hat{x}^*,\hat{y}^*\right)\| = 0$. Hence, $x_t=y_t=0$ for all $t \geq t_0$.
    	
    	In this case, the same energy function $\widehat{\mathcal{E}}(t)$ remains applicable. The last three terms in \eqref{strong} vanish, so \eqref{strong} reduces to
    	\begin{equation*}
    		\begin{aligned}
    			\frac{2}{t}\widehat{\mathcal{E}}(t)+\dot{\widehat{\mathcal{E}}}(t)\leq & \left(\dot{\beta}(t)+\left(\frac{2}{t}-\frac{1}{\theta t}\right)\beta(t)\right) \left(\mathcal{L}_t\left(x(t),y_t\right) - \mathcal{L}_t\left(x_t,y(t)\right)\right) \\
    			&-\frac{l(t)}{2}\left(\|x(t)-x_t\|^2+\|y(t)-y_t\|^2\right) \\
    			&-\Gamma(t)\left(\|\dot{x}(t)\|^2+\|\dot{y}(t)\|^2\right).
    		\end{aligned}
    	\end{equation*}
    	Using \eqref{eq19'} and conditions \eqref{tiaojian2}--\eqref{tiaojian4}, the above differential inequality yields $\widehat{\mathcal{E}}(t) \leq \frac{t_0^2\widehat{\mathcal{E}}(t_0)}{t^2}$. Inequality \eqref{lyapunov'} then yields
    	\begin{equation*}
    		\|x(t)-x_t\|^2 + \|y(t)-y_t\|^2 \leq \frac{2t_0^2\widehat{\mathcal{E}}(t_0)}{t^2\beta(t)\epsilon(t)}.
    	\end{equation*}
    	Together with $\lim\limits_{t\to+\infty}t^2\beta(t)\epsilon(t)=+\infty$, this shows that the trajectory converges strongly to the minimum-norm saddle point, namely,
    	\begin{equation*}
    		\left(x(t),y(t)\right) \to \left(\hat{x}^*,\hat{y}^*\right).
    	\end{equation*}
    	
    	This completes the proof of Corollary \ref{corollary1}.
    \end{proof}

\section{Numerical Experiments}\label{sec4}
In this section, we present three numerical examples to illustrate the finite-time behavior and numerical performance of the proposed system for selected parameter settings. The numerical solutions are computed in MATLAB R2023b using the adaptive Runge--Kutta method implemented in \texttt{ode45}. All simulations are performed on a personal computer equipped with a 1.60 GHz Intel Core i5-10210U processor and 16 GB of RAM.

Before presenting the numerical experiments, we clarify the terminology used below. The slowly damped Tikhonov-regularized inertial primal-dual dynamical system introduced in \cite{ref44} is referred to as (SDTRD). Consistent with the terminology used in the preceding sections, the dynamical systems introduced in \cite{ref40} and \cite{ref42} are referred to as (APDD) and (MPDD), respectively, while the dynamical system considered in this paper is denoted by \eqref{eq15}.

\begin{example}\label{example1}
	\textbf{A Min-Max Optimization Problem}
\end{example}

	Let $x := (x_1,x_2)^{T} \in \mathbb{R}^{2}$ and $y := (y_1,y_2)^{T} \in \mathbb{R}^{2}$. Consider the following convex-concave min-max problem \cite{ref45}:
	\begin{equation}\label{e1}
		\min_{x \in \mathbb{R}^2}\max_{y \in \mathbb{R}^2} \left(mx_1+nx_2\right)^2+\langle Kx,y \rangle-\left(jy_1+ky_2\right)^2,
	\end{equation}
	where $m,n,j,k \in \mathbb{R}\setminus\{0\}$ and
	$K=\begin{pmatrix}
		mj & nj\\
		mk & nk
	\end{pmatrix}$.
	A direct calculation shows that the saddle point set of the problem \eqref{e1} is
	$\left\{ (x,y) \in \mathbb{R}^2 \times \mathbb{R}^2 \mid mx_1 + nx_2 = 0 \text{ and } jy_1 + ky_2 = 0 \right\}$.
	Hence, $\left( \hat{x}^*,\hat{y}^* \right)=\left(0,0,0,0\right)^T$ is the minimum-norm saddle point of the problem \eqref{e1}, with the corresponding saddle value equal to $0$.
	
	For the numerical experiments, we take $(x^*,y^*)=(\hat{x}^*,\hat{y}^*)=(0,0,0,0)^T$. We set $m=1,n=10,j=15,k=1,\alpha=17,\beta=1,\alpha(t)=\frac{\alpha}{t},\beta(t)=t^\beta,\theta=\frac{2}{\alpha-1}$. The initial time is $t_0=1$, and the dynamical systems are numerically integrated over the time interval $[1,50]$ with the following initial conditions:
	\begin{equation}\label{chushitiaojian}
		x(t_0)=
		\begin{pmatrix}
			1\\
			1.5
		\end{pmatrix},
		\quad
		\dot{x}(t_0)=
		\begin{pmatrix}
			1\\
			1
		\end{pmatrix},
		\quad
		y(t_0)=
		\begin{pmatrix}
			1\\
			1.5
		\end{pmatrix},
		\quad
		\dot{y}(t_0)=
		\begin{pmatrix}
			1\\
			1
		\end{pmatrix}.
	\end{equation}
	
	In the first experiment, we set the Tikhonov regularization parameter to $\epsilon(t)=\frac{7}{t^{2}}$ and examine the behavior of the trajectory relative to the minimum-norm saddle point $(\hat{x}^{*},\hat{y}^{*})=(0,0,0,0)^{T}$. Using the initial conditions given in \eqref{chushitiaojian}, Fig.~\ref{Fig2}(a) shows the components of the trajectory generated by \eqref{eq15} with $\epsilon(t)=\frac{7}{t^{2}}$, while Fig.~\ref{Fig2}(b) shows the corresponding trajectory components for the dynamical system without Tikhonov regularization, that is, with $\epsilon(t)\equiv 0$.
	\begin{figure}[htbp]
		\centering
		\subfigure[With Tikhonov regularization]{%
			\includegraphics[width=0.475\textwidth]{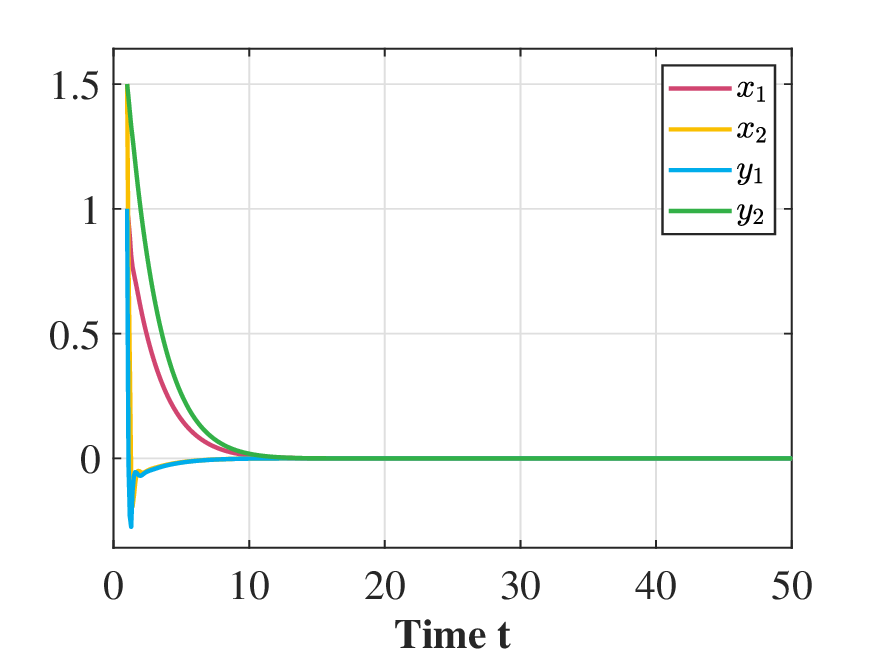} }
		\subfigure[Without Tikhonov regularization]{%
			\includegraphics[width=0.475\textwidth]{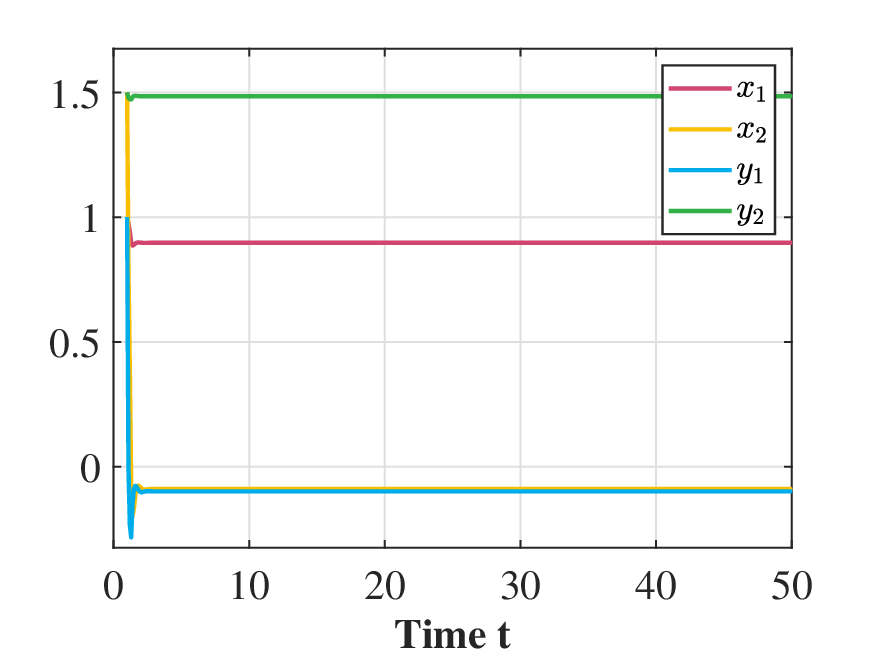} }
		\caption{Effect of Tikhonov regularization on the convergence toward the minimum‑norm saddle point}
		\label{Fig2} 
	\end{figure}
	
	Fig.~\ref{Fig2} illustrates that, with Tikhonov regularization, the trajectory $\left(x(t),y(t)\right)$ approaches the minimum-norm saddle point $\left(\hat{x}^{*},\hat{y}^{*}\right)=\left(0,0,0,0\right)^{T}$. In contrast, the unregularized trajectory fails to exhibit comparable convergence toward the minimum-norm saddle point.

	In the second numerical experiment, we examine how different decay exponents of the Tikhonov regularization parameter affect the primal-dual gap $\mathcal{L}(x(t),y^{*})-\mathcal{L}(x^{*},y(t))$, the trajectory error $\|x(t)-x^{*}\|+\|y(t)-y^{*}\|$, and the velocity measure $\|\dot{x}(t)\|+\|\dot{y}(t)\|$ along the trajectory generated by \eqref{eq15}. Specifically, we set $\epsilon(t)=\frac{7}{t^r}$ with $1<r<3$ and consider $r\in\{1.2,1.6,2.0,2.4,2.8\}$.
	
    \begin{figure}[htbp]
    	\centering
    	
    	\subfigure[Decay of the primal-dual gap]{%
    		\includegraphics[width=0.475\textwidth]
    		{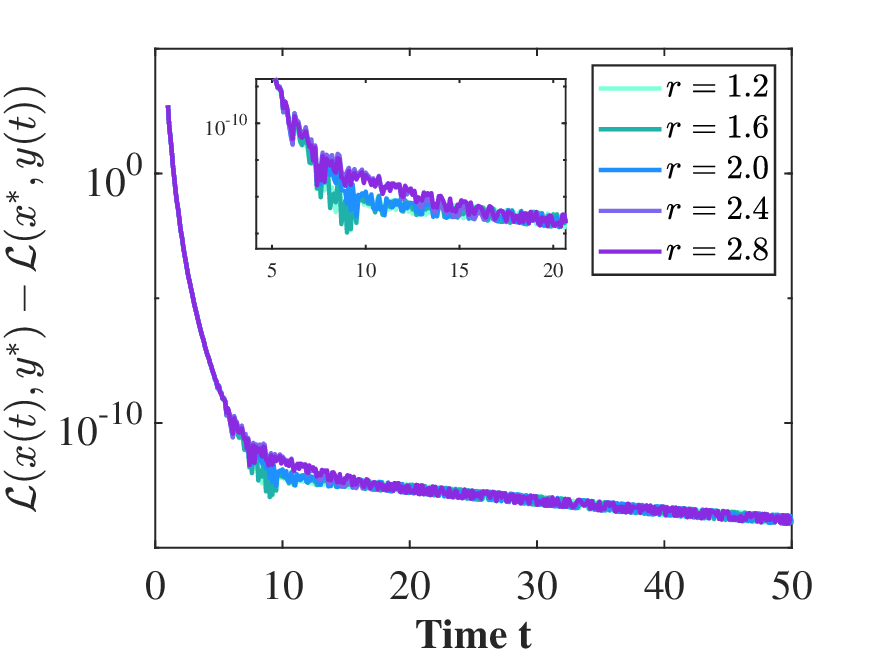}
    		\label{Fig1a}
    	}
    	
    	\vspace{0.6em}
    	
    	\subfigure[Decay of the trajectory error]{%
    		\includegraphics[width=0.475\textwidth]
    		{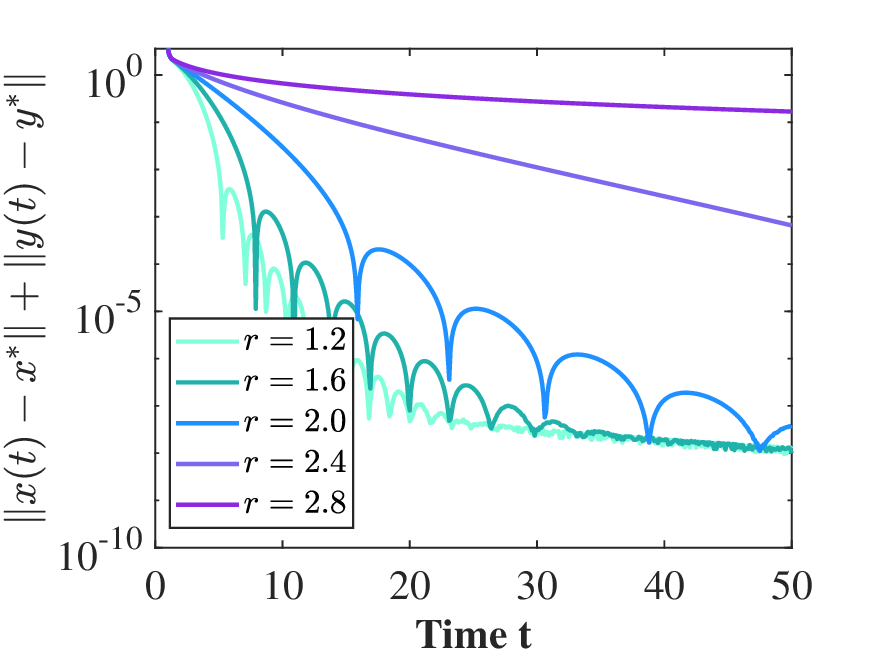}
    		\label{Fig1b}
    	}
    	\subfigure[Decay of the velocity measure]{%
    		\includegraphics[width=0.475\textwidth]
    		{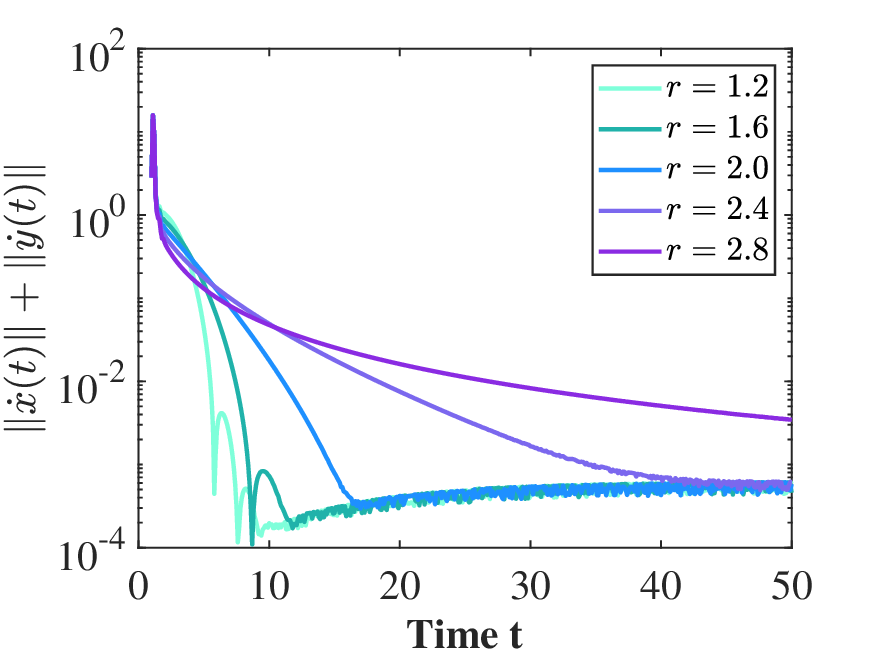}
    		\label{Fig1c}
    	}
    	
    	\caption{Convergence behavior of \eqref{eq15} under different decay exponents $r$ for the Tikhonov regularization parameter}
    	\label{Fig1}
    \end{figure}
	
	Fig.~\ref{Fig1} illustrates that the primal–dual gap $\mathcal{L}\left(x(t),y^*\right)-\mathcal{L}\left(x^*,y(t)\right)$ is relatively insensitive to variations in the decay exponent $r$ of the Tikhonov regularization parameter. Moreover, as $r$ decreases within the interval $1<r<3$, both the trajectory error $\left\|x(t)-x^*\right\|+\left\|y(t)-y^*\right\|$ and the velocity measure $\left\|\dot{x}(t)\right\|+\left\|\dot{y}(t)\right\|$ tend to decay more rapidly and attain smaller values over the considered time interval.

    In the third numerical experiment, we compare the decay behavior of the primal-dual gap for (APDD), (SDTRD), and \eqref{eq15}. For each dynamical system, numerical simulations are performed under two different parameter settings, as specified in Table~\ref{table1}.
    
	\begin{table}[htbp]
		
		\centering
		
		\caption{Parameter settings used in the third numerical experiment for Example~\ref{example1}}
		
		\label{table1}
		
		\renewcommand{\arraystretch}{2}
		
		\begin{tabular*}{\linewidth}{@{\extracolsep{\fill}} llccccc @{}}
			
			\toprule
			
			Dynamical system & Setting & $\alpha(t)$ & $\beta(t)$ & $\theta(t)$ & $\epsilon(t)$ & Reference \\
			
			\midrule
			
			\multirow{2}{*}[-0.5ex]{APDD}
			& Case 1 & $\dfrac{2}{t}$ & $1$ & $\dfrac{3t}{4}$ & $0$ & \cite[the case $0<\alpha<3$]{ref40} \\
			
			& Case 2 & $\dfrac{17}{t}$ & $1$ & $\dfrac{t}{2}$ & $0$ & \cite[the case $\alpha>3$]{ref40} \\
			
			\cmidrule(lr){1-7}
			
			\multirow{2}{*}[-0.5ex]{SDTRD}
			& Case 1 & $\dfrac{17}{t^{0.8}}$ & $t$ & $\dfrac{t^{0.8}}{16}$ & $\dfrac{7}{t^{3}}$ & \cite[Theorem~3.1]{ref44} \\
			
			& Case 2 & $\dfrac{17}{t^{0.8}}$ & $t$ & $\dfrac{t^{0.8}}{16}$ & $\dfrac{7}{t^{2}}$ & \cite[Theorem~3.2]{ref44} \\
			
			\cmidrule(lr){1-7}
			
			\multirow{2}{*}[-0.5ex]{GTRD}
			& Case 1 & $\dfrac{17}{t}$ & $t$ & $\dfrac{t}{8}$ & $\dfrac{7}{t^2}$ & \multirow{2}{*}[-0.5ex]{Theorem \ref{theorem2}} \\
			
			& Case 2 & $\dfrac{17}{t} + \dfrac{1}{t^2}$ & $t$ & $\dfrac{t}{8}$ & $\dfrac{7}{t^2}$ & \\
			
			\bottomrule
			
		\end{tabular*}
		
	\end{table}

	\begin{figure}[htbp]
		\centering
		\includegraphics[width=0.75\textwidth]{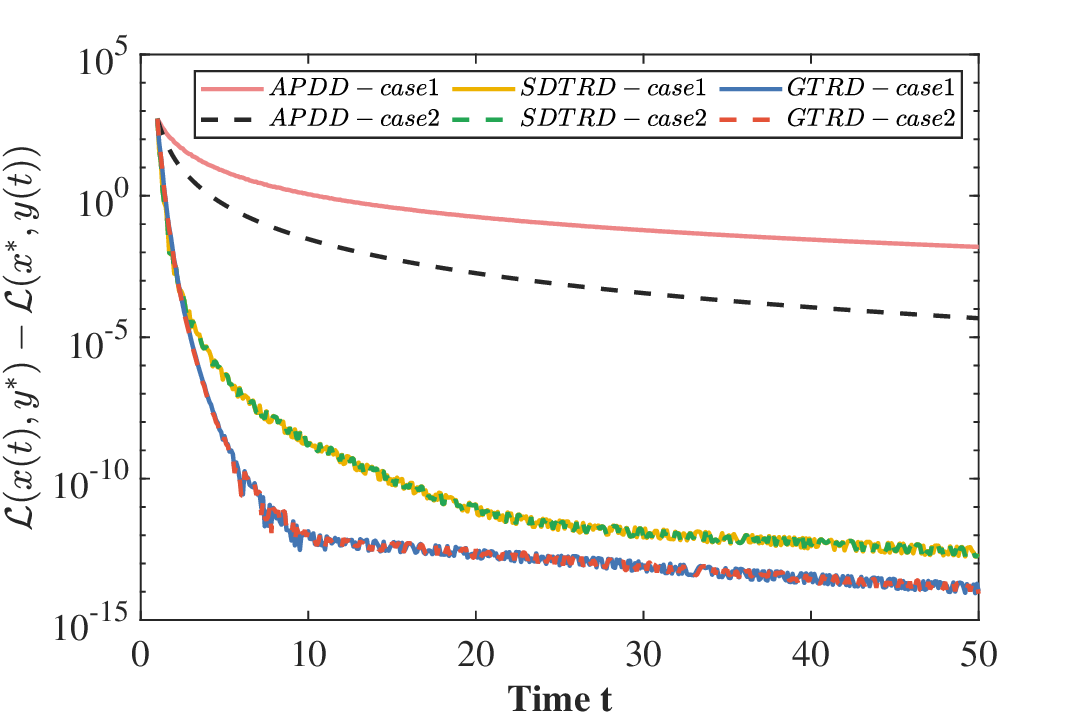}
		\caption{Comparison of primal‑dual gap decay curves for (APDD), (SDTRD), and \eqref{eq15}}
		\label{Fig3} 
	\end{figure}
	
    Fig.~\ref{Fig3} indicates that, under the tested parameter settings, \eqref{eq15} generally attains smaller primal-dual gap values than (APDD) and (SDTRD) over the considered simulation interval.

\begin{remark}\label{shuzhishiyan1}
	In the first numerical experiment, the case $\epsilon(t) \equiv 0$ corresponds to (MPDD) with $\theta(t) = \theta t$, as introduced in \cite{ref42}. The numerical results in Fig.~\ref{Fig2} indicate that the trajectory generated by \eqref{eq15} converges toward the minimum‑norm saddle point, whereas the trajectory generated by (MPDD) does not exhibit comparable convergence toward this point over the considered time interval.
\end{remark}

\begin{example}
	\textbf{A Rank-Deficient Quadratic Saddle Point Problem}
\end{example}

Let $x:=(x_1,x_2)^T\in\mathbb{R}^2$ and $y:=(y_1,y_2)^T\in\mathbb{R}^2$. Consider the following convex-concave min-max problem:
\begin{equation}
	\min_{x\in\mathbb{R}^2}\max_{y\in\mathbb{R}^2}
	\frac{\mu}{2}
	\left(x_1-1-\frac{2}{\mu}\right)^2
	+\langle Kx,y\rangle
	-\frac{\mu}{2}
	\left(y_1-2+\frac{1}{\mu}\right)^2,
	\label{eq:strong-example}
\end{equation}
where $\mu>0$ and $K=
\begin{pmatrix}
	1 & 0\\
	0 & 0
\end{pmatrix}$.
A direct calculation shows that the saddle point set of \eqref{eq:strong-example} is $\Omega=\left\{ (x,y) \in \mathbb{R}^2 \times \mathbb{R}^2 \mid x_1=1 \text{ and } y_1=2 \right\}$. Hence, the unique minimum-norm saddle point is not the origin and is given by $(\hat{x}^{*},\hat{y}^{*})=(1,0,2,0)^T$.

For the numerical experiment, we set $\mu=1000$, $\alpha(t)=\frac{20+0.5\sin(\log t)}{t}$, $\beta(t)=1$, $\theta=0.1$, $\epsilon(t)=\frac{0.03}{\sqrt{\log t}}$. These parameter choices satisfy the sufficient condition \eqref{N2'}, as illustrated by the example following Remark \ref{remark:N1relation}. The initial time is $t_0=5$, and the dynamical system is numerically integrated over the time interval $[5,200]$ with the following initial conditions:
\begin{equation*}
	x(t_0)=
	\begin{pmatrix}
		2.5\\
		1.5
	\end{pmatrix},
	\qquad
	\dot{x}(t_0)=
	\begin{pmatrix}
		0.2\\
		-0.3
	\end{pmatrix},
	\qquad
	y(t_0)=
	\begin{pmatrix}
		-0.5\\
		-1
	\end{pmatrix},
	\qquad
	\dot{y}(t_0)=
	\begin{pmatrix}
		-0.2\\
		0.4
	\end{pmatrix}.
	\label{eq:strong-example-initial}
\end{equation*}

Fig.~\ref{Fig4} compares the trajectories generated by \eqref{eq15} with and without Tikhonov regularization. The regularized trajectory converges strongly toward the nonzero minimum-norm saddle point $(\hat{x}^{*},\hat{y}^{*})=(1,0,2,0)^T$, whereas the unregularized trajectory does not exhibit comparable convergence over the considered time interval.
\begin{figure}[htbp]
	\centering
	\subfigure[With Tikhonov regularization]{%
		\includegraphics[width=0.475\textwidth]{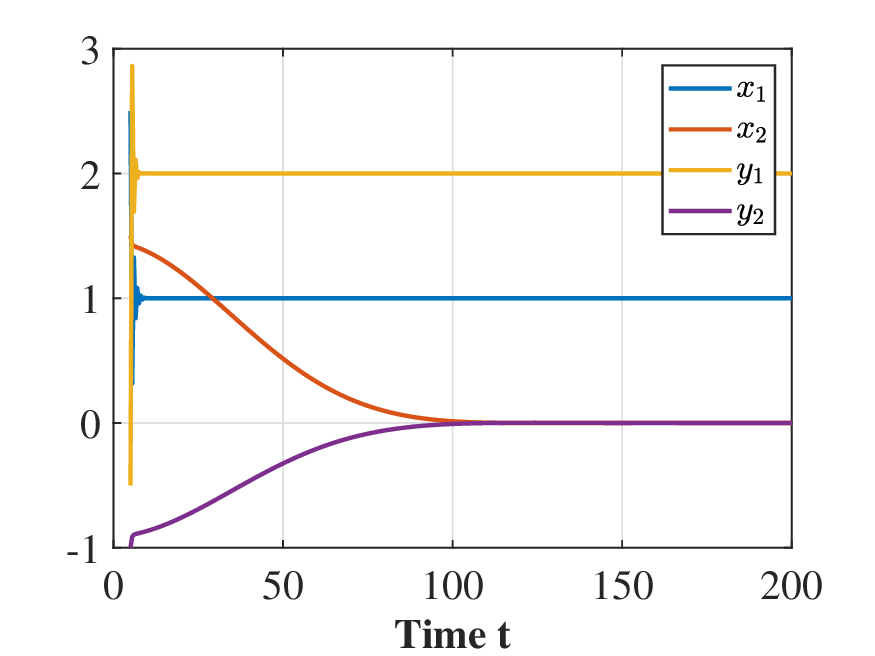} }
	\subfigure[Without Tikhonov regularization]{%
		\includegraphics[width=0.475\textwidth]{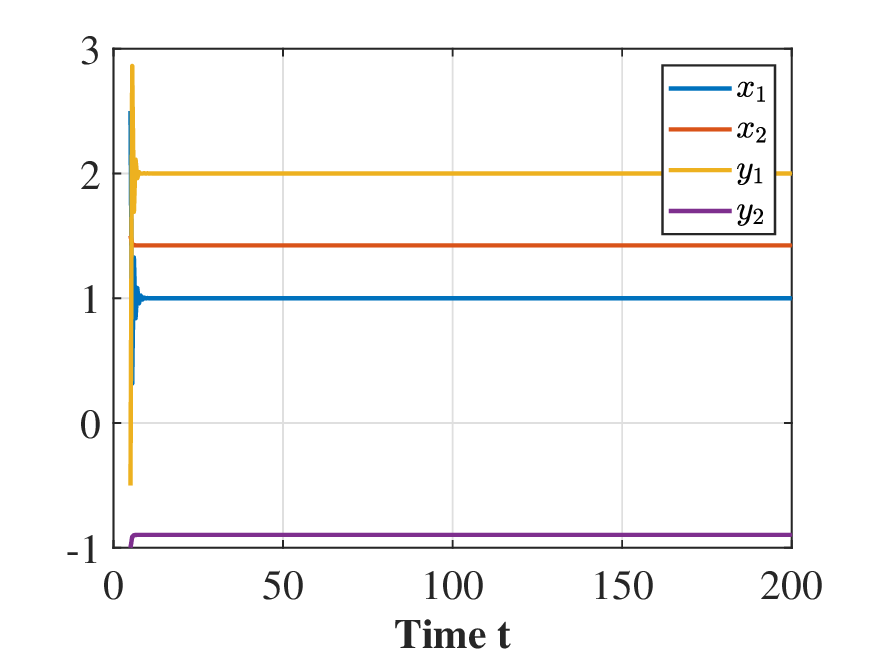} }
	\caption{Effect of Tikhonov regularization on the convergence toward the nonzero minimum‑norm saddle point}
	\label{Fig4} 
\end{figure}

\begin{example}\label{example2}
    \textbf{A Linear Regression Problem}
\end{example}
	Consider the following linear regression problem:
	\begin{equation*}\label{l2}
		\min_{x \in \mathbb{R}^n} \Phi(x) = \frac{1}{2} \left\| Kx - b \right\|^2 + \omega \mathcal{R}^a(x),
	\end{equation*}
	where $K \in \mathbb{R}^{m \times n}$, $b \in \mathbb{R}^m$, $\omega > 0$, $a>0$, and
	\begin{equation*}
		\mathcal{R}^a(x) = \sum_{i=1}^n \frac{1}{a}\left(\log(1 + \exp(a x_i)) + \log(1 + \exp(-a x_i))\right)
	\end{equation*}
	is a smoothed $L_1$-regularization term (see \cite{shuzhi}). The corresponding convex-concave saddle point formulation can be written as
	\begin{equation}\label{l2'}
		\min_{x \in \mathbb{R}^n} \max_{y \in \mathbb{R}^m} \omega \mathcal{R}^a(x) + \langle Kx, y \rangle - \left( \frac{1}{2} \| y \|^2 + \langle b, y \rangle \right).
	\end{equation}
	
	We first generate an auxiliary matrix $K_{0}\in\mathbb{R}^{m\times n}$ with independent standard normal entries and compute its singular value decomposition $K_{0}=U\Sigma V^{T}$. For a prescribed condition number $\kappa_{K}$, we set $\sigma_{\min}=0.1$ and $\sigma_{\max}=\kappa_{K}\sigma_{\min}$. The intermediate singular values are sampled independently from a log-uniform distribution on $[\sigma_{\min},\sigma_{\max}]$, while the smallest and largest singular values are fixed at the two endpoints. We then reconstruct the matrix as $K=U\widetilde{\Sigma}V^{T}$, thereby ensuring that $\kappa(K)=\kappa_{K}$ up to numerical precision.
	
	We set $\omega=0.1$ and $a=100$, and generate $b\sim\mathcal{N}(0,I_m)$. With the random seed fixed at $42$, the numerical experiments are performed for $\kappa(K)\in\{35,200\}$ with $n=200$ and $m=100$. The initial conditions are given by
	\[
	x(t_0)=\dot{x}(t_0)=\mathbf{1}_{n},
	\qquad
	y(t_0)=\dot{y}(t_0)=\mathbf{1}_{m},
	\]
	where $\mathbf{1}_{n}\in\mathbb{R}^{n}$ and $\mathbf{1}_{m}\in\mathbb{R}^{m}$ denote the all-ones vectors.

	In the first numerical experiment, conducted over the time interval $[1,20]$, we set $\alpha=7$, $\beta=1$, $\alpha(t)=\frac{\alpha}{t}$, $\beta(t)=t^\beta$, $\theta=\frac{1}{5}$, and $\epsilon(t)=\frac{c}{t^r}$. To examine the influence of Tikhonov regularization on the primal objective value $\Phi(x(t))$ along the trajectories associated with \eqref{l2'}, we consider $c=1$ and $r\in\{1.5,2.0,2.5\}$. The resulting objective values are compared with those obtained from the corresponding dynamical system without Tikhonov regularization, that is, with $c=0$.
	
	\begin{figure}[htbp]
		\centering
		\subfigure[$\kappa(K) = 35$]{%
			\includegraphics[width=0.475\textwidth]{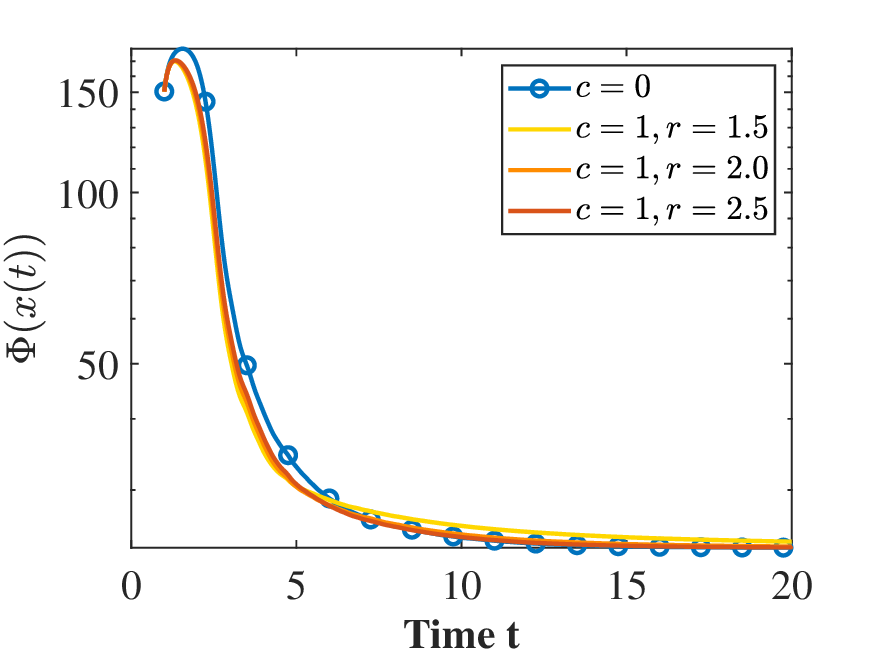} }
		\subfigure[$\kappa(K) = 200$]{%
			\includegraphics[width=0.475\textwidth]{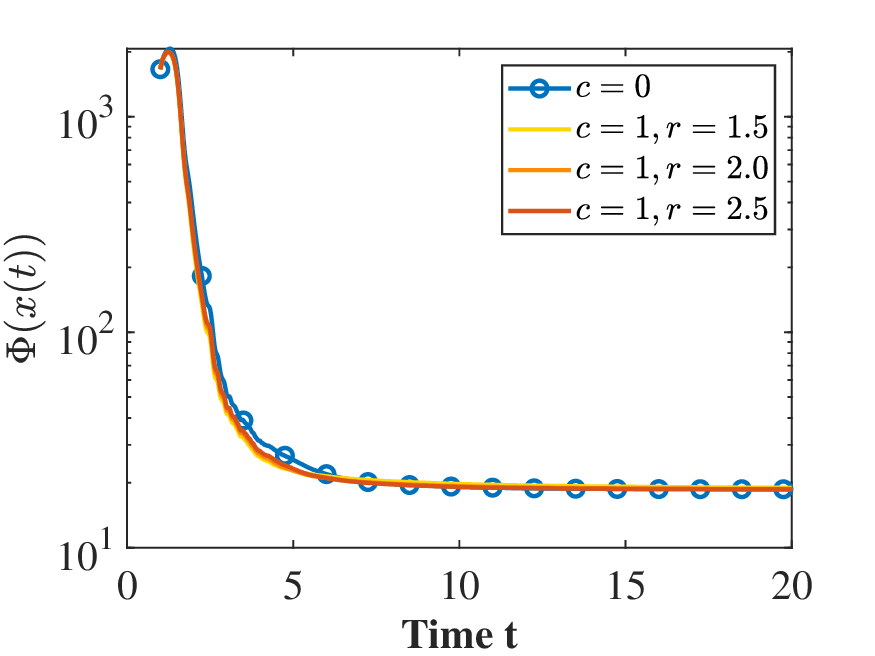} }
		\caption{Convergence of $\Phi(x(t))$ for $n = 200$ and $m = 100$}
		\label{Fig4.1} 
	\end{figure}
	
	Fig.~\ref{Fig4.1} illustrates the following numerical observations:
	\begin{itemize}
		\item[(1)] For both condition-number settings, the objective value $\Phi(x(t))$ along the trajectory generated by the dynamical system \eqref{eq15} with Tikhonov regularization generally exhibits faster decay than that along the trajectory generated by the corresponding unregularized system over the considered time interval.
		\item[(2)] For the tested values $r\in\{1.5,2.0,2.5\}$, the decay exponent $r$ exerts only a limited influence on the objective value $\Phi(x(t))$, since the corresponding curves remain close over the considered time interval.
	\end{itemize}
    
    In the second numerical experiment, we compare the evolution of the objective value $\Phi\left(x(t)\right)$ for (APDD), (SDTRD), and \eqref{eq15} over the time interval $[1,50]$. Each dynamical system is simulated under two different parameter settings, as specified in Table~\ref{table2}.
    
    \begin{table}[htbp]
    	
    	\centering
    	
    	\caption{Parameter settings used in the second numerical experiment for Example~\ref{example2}}
    	
    	\label{table2}
    	
    	\renewcommand{\arraystretch}{2}
    	
    	\begin{tabular*}{\linewidth}{@{\extracolsep{\fill}} llccccc @{}}
    		
    		\toprule
    		
    		Dynamical system & Setting & $\alpha(t)$ & $\beta(t)$ & $\theta(t)$ & $\epsilon(t)$ & Reference \\
    		
    		\midrule
    		
    		\multirow{2}{*}[-0.5ex]{APDD}
    		& Case 1 & $\dfrac{2}{t}$ & $1$ & $\dfrac{3t}{4}$ & $0$ & \cite[the case $0<\alpha<3$]{ref40} \\
    		
    		& Case 2 & $\dfrac{7}{t}$ & $1$ & $\dfrac{t}{2}$ & $0$ & \cite[the case $\alpha>3$]{ref40} \\
    		
    		\cmidrule(lr){1-7}
    		
    		\multirow{2}{*}[-0.5ex]{SDTRD}
    		& Case 1 & $\dfrac{6}{t^{0.2}}$ & $t^{0.1}$ & $\dfrac{t^{0.2}}{5}$ & $\dfrac{10}{t^{2}}$ & \multirow{2}{*}[-0.5ex]{\cite[Example 5.2]{ref44}} \\
    		
    		& Case 2 & $\dfrac{6}{t^{0.6}}$ & $t^{0.3}$ & $\dfrac{t^{0.6}}{5}$ & $\dfrac{10}{t^{2}}$ & \\
    		
    		\cmidrule(lr){1-7}
    		
    		\multirow{2}{*}[-0.5ex]{GTRD}
    		& Case 1 & $\dfrac{7}{t}$ & $t$ & $\dfrac{t}{5}$ & $\dfrac{10}{t^2}$ & \multirow{2}{*}[-0.5ex]{Theorem \ref{theorem2}} \\
    		
    		& Case 2 & $\dfrac{7}{t} + \dfrac{1}{t^2}$ & $t$ & $\dfrac{t}{5}$ & $\dfrac{10}{t^2}$ & \\
    		
    		\bottomrule
    		
    	\end{tabular*}
    	
    \end{table}

    \begin{figure}[htbp]
    	\centering
    	\subfigure[$\kappa(K) = 35$]{%
    		\includegraphics[width=0.475\textwidth]{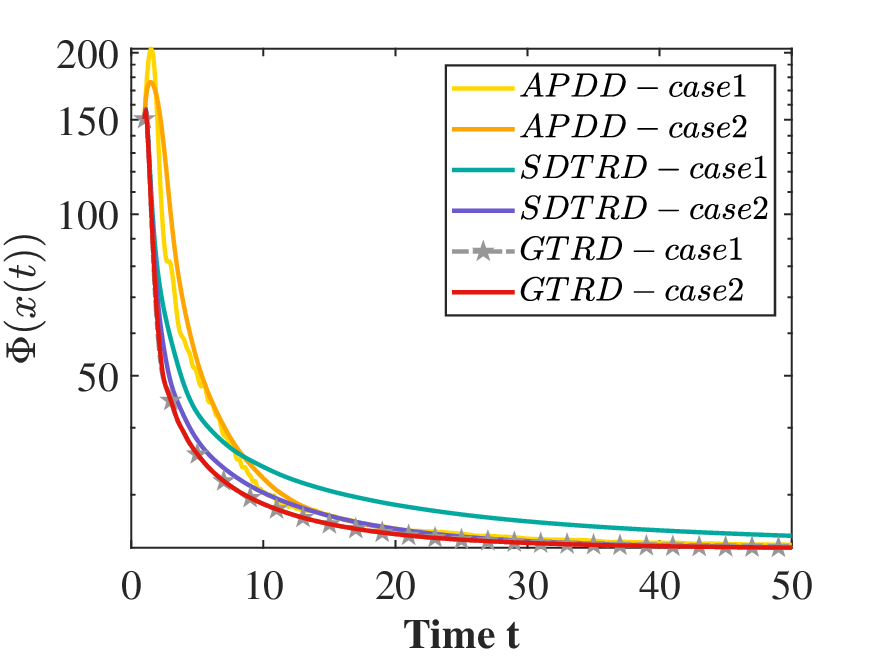} }
    	\quad
    	\subfigure[$\kappa(K) = 200$]{%
    		\includegraphics[width=0.475\textwidth]{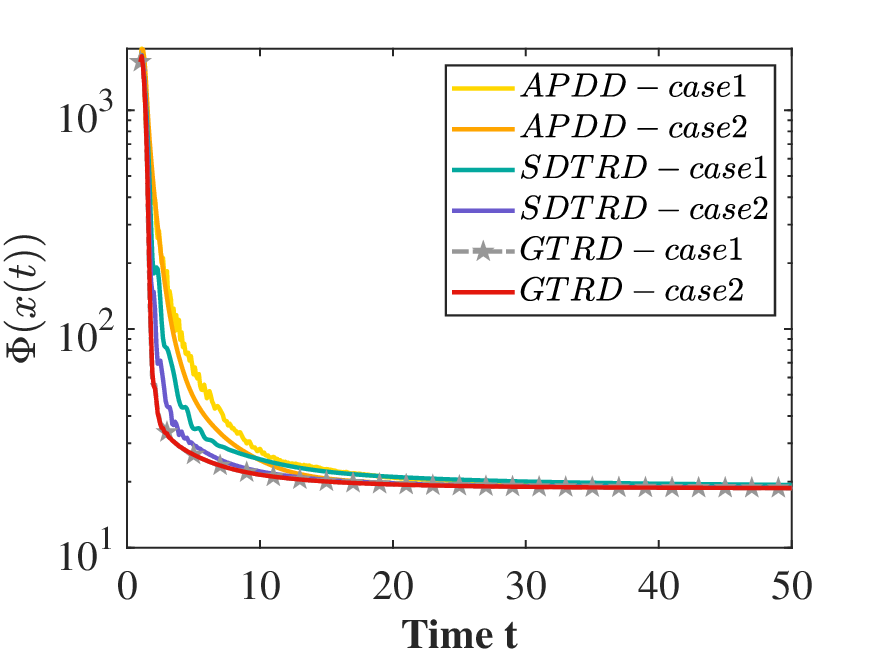} }
    	\caption{Convergence of $\Phi\left(x(t)\right)$ for $n = 200$ and $m = 100$}
    	\label{Fig5.1} 
    \end{figure}
    
    The numerical results in Fig.~\ref{Fig5.1} indicate the following behavior:
    \begin{itemize}
    	\item[(1)] For the tested parameter settings, the objective value along the trajectory generated by \eqref{eq15} generally decreases more rapidly during the initial transient stage and attains smaller values than the corresponding objective values for (APDD) and (SDTRD) over the considered time interval.
    	\item[(2)] These results further suggest that the relative advantage of \eqref{eq15} persists across the two tested condition-number settings.
    \end{itemize}


\section{Conclusion}\label{sec5}

In this paper, we propose a Tikhonov-regularized second-order primal-dual dynamical system with time-dependent damping and time scaling for solving convex-concave bilinear saddle point problems. We develop a Lyapunov-based coefficient-compatible framework to characterize, through differential balance conditions, the joint admissible regime of the viscous damping coefficient $\alpha(t)$, the time-scaling coefficient $\beta(t)$, the extrapolation parameter $\theta$, and the Tikhonov regularization parameter $\epsilon(t)$, thereby extending the analysis beyond prescribed damping profiles.

Within this framework, we investigate two regimes corresponding to different decay rates of the Tikhonov regularization parameter. Under relatively fast decay, we establish the convergence rate $\mathcal{L}(x(t),y^*)-\mathcal{L}(x^*,y(t))=\mathcal{O}\left(\frac{1}{t^2\beta(t)}\right)$ for the primal-dual gap, together with estimates for the trajectory velocities, gradient differences, and several weighted integrals. Under slower decay, we further obtain $\mathcal{L}(x(t),y^*)-\mathcal{L}(x^*,y(t))=o\left(\frac{1}{\beta(t)}\right)$, provided that $\beta(t)\to+\infty$.

For the minimum-norm saddle point selection problem, we construct a moving-saddle Lyapunov framework centered at the instantaneous Tikhonov-regularized saddle point. Under an appropriate path-motion condition, we prove that the entire trajectory converges strongly to the minimum-norm saddle point of the original problem, without directly imposing an eventual geometric localization condition on the original trajectory. Furthermore, we derive a parameter-based sufficient condition that makes the required path-motion condition explicitly verifiable and present a special corollary for the case in which the minimum-norm saddle point is the origin.

Finally, numerical experiments illustrate the theoretical results and the finite-time behavior of the proposed dynamical system \eqref{eq15} under representative parameter settings.

\section*{Acknowledgements}

This research was supported by the National Natural Science Foundation of China (62371094).

\backmatter

\begin{appendices}
	
\end{appendices}

\bibliography{sn-bibliography}

\end{document}